\newtheorem{theorem}{Theorem}
\newtheorem{proposition}[theorem]{Proposition}
\newtheorem{lemma}[theorem]{Lemma}
\newtheorem{corollary}[theorem]{Corollary}
\theoremstyle{definition}
\newtheorem{definition}[theorem]{Definition}
\newtheorem{remark}[theorem]{Remark}
\newtheorem{example}[theorem]{Example}
\newcommand{\LL}{{\mathbb L}}
\newcommand{\RR}{{\mathbb R}}
\newcommand{\SSS}{{\mathbb S}}
\newcommand{\cU}{{\mathcal U}}
\newcommand{\cL}{{\mathcal L}}
\begin{document}

\title{Symplectic and Poisson Geometry on $b$-Manifolds}

\author{Victor Guillemin}\address{ Victor Guillemin, Department of Mathematics, Massachussets Institute of Technology, Cambridge MA, US, \it{e-mail: vwg@math.mit.edu}}
\author{Eva Miranda}\address{ Eva Miranda,
Departament de Matem\`{a}tica Aplicada I, Universitat Polit\`{e}cnica de Catalunya, Barcelona, Spain, \it{e-mail: eva.miranda@upc.edu}}
\author{Ana Rita Pires} \address{Ana Rita Pires, Department of Mathematics, Cornell University \it{e-mail: apires@math.cornell.edu}}
   \thanks{Eva Miranda is partially supported by the  Ministerio de Econom�a y Competitividad project GEOMETRIA ALGEBRAICA, SIMPLECTICA, ARITMETICA Y APLICACIONES with reference: MTM2012-38122-C03-01 and by the ESF network CAST. Ana Rita Pires had the partial support of an AMS-Simons Travel Grant.}

\date{\today}

\begin{abstract}
Let $M^{2n}$ be a Poisson manifold with Poisson bivector field $\Pi$. We say that $M$ is $b$-Poisson if the map $\Pi^n:M\to\Lambda^{2n}(TM)$ intersects the zero section transversally on a codimension one submanifold $Z\subset M$. This paper will be a systematic investigation of such Poisson manifolds. In particular, we will study in detail the structure of $(M,\Pi)$ in the neighborhood of $Z$ and using symplectic techniques define topological invariants which determine the structure up to isomorphism. We also investigate a variant of de Rham theory for these manifolds and its connection with Poisson cohomology.
\end{abstract}

\maketitle

\section{Introduction}

In her 2002 paper \cite{Radko}, Radko gave a classification of stable Poisson structures on a compact 2-manifold $M$, where stable for a Poisson bivector field $\Pi$ means that the map
$$\Pi:M\rightarrow \Lambda^2(TM)$$
is transverse to the zero section. In this paper we describe some partial generalizations of this result to higher dimensions, with the stability condition replaced by a more complicated condition which we call $b$-Poisson: the map
$$\Pi^n=\Pi\wedge\ldots\wedge\Pi:M\rightarrow\Lambda^{2n}(TM)$$
must be transverse to the zero section of $\Lambda^{2n}(TM)$. We denote by $Z$ the hypersurface in $M$ where this map is zero. Then, $\Pi$ restricted to $Z$ defines a regular Poisson structure $\Pi_Z$ on $Z$ with codimension-one symplectic leaves.

When generalizing the results in \cite{Radko} to $n$ dimensions, one is confronted with the question of whether or not a such a Poisson structure on $Z$ can be extended to a $b$-Poisson structure on a neighborhood of $Z$ in $M$. We use modular geometry techniques of Weinstein to give necessary and sufficient conditions on $\Pi_Z$ for this to be the case. Then, we address the more complicated issue of how many extensions exist up to Poisson isomorphism and show that these are classified by the elements of $H^1_\text{Poisson}(Z)$. To prove this we give an alternative definition of $b$-Poisson: we show following Melrose and Nest-Tsygan that a $b$-Poisson manifold can be regarded dually as a $b$-symplectic manifold. More explicitly, we define a $b$-manifold to be a pair $(M,Z)$, where $M$ is a manifold and $Z$ a (not necessarily connected) hypersurface in $M$. We define a $b$-category in which the objects are such pairs and the morphisms are maps
$$f:(M_1,Z_1)\rightarrow(M_2,Z_2)$$
where $f$ is transverse to $Z_2$ and $Z_1=f^{-1}(Z_2)$; we call these $b$-maps. If one defines the $b$-tangent bundle $^bTM$ and $b$-cotangent bundle $^bT^*M$ for $(M,Z)$ following \cite{Melrose}\footnote{As is described in section \ref{section:whatdoesbstandfor}, we have borrowed the name of $b$-manifolds from the \emph{b-calculus} developed by Melrose \cite{Melrose}. Another possible denomination, that of \emph{log-symplectic manifolds},  is inspired from Algebraic Geometry and the approach of Goto \cite{goto} and, more recently, by Gualtieri and Li in \cite{Gualtierili} in the holomorphic case.  Our approach to $b$-geometry in this paper is close to but not exactly the same as that of Melrose, where a $b$-manifold is a manifold with boundary and $Z=\partial M$.}, these have nice functorial properties with respect to $b$-maps. We then define a $b$-symplectic form to be a nondegenerate closed $b$-two form $\omega$, i.e., a section of $\Lambda^2(\,^bT^*M)$ which is nondegenerate at all $p\in Z$ and show that $\omega|_Z$ defines a Poisson vector field $v$ on $(Z,\Pi_Z)$ and hence a cohomology class in the quotient
$$H^1_\text{Poisson}(Z)=\frac{\text{Poisson vector fields}}{\text{Hamiltonian vector fields}}.$$

Our proof that the converse assertion is true, i.e., that $\left[v\right]$ determines up to isomorphism the $b$-Poisson structure on $M$ (locally in a neighborhood of $Z$), is then reduced to a standard Moser type argument: if $\omega_0$ and $\omega_1$ are two $b$-symplectic forms on a tubular neighborhood $\cU$ of $Z$ in $M$ and $\omega_0|_Z=\omega_1|_Z$, then $\omega_0-\omega_1=d\mu$ for some $b$-one form $\mu\in\,^b\Omega^1(\cU)$, and one gets a diffeomorphism on $\cU$ mapping $\omega_0$ to $\omega_1$ by integrating the $b$-vector field $v_t$ defined by
$$\iota_{v_t}\omega_t=\mu, \text{ where } \omega_t=(1-t)\omega_0+t\omega_1.$$

We also use this Moser technique to prove that for $\omega_0$ and $\omega_1$ globally defined on a compact $M$, $\omega_0$ and $\omega_1$ are globally $b$-symplectomorphic if the usual global Moser conditions are satisfied, i.e., if $\omega_t$ is $b$-symplectic and $\left[\omega_t\right]\in\,^bH^2(M,\RR)$ is independent of $t$, in addition to $\omega_t|_Z$ being independent of $t$. We will show that in dimension two this argument gives an alternative $b$-symplectic proof of Radko's theorem.

The main part of this paper (sections \ref{section:whatdoesbstandfor} to \ref{section:theextensionproblem}) is a more detailed account of these results.
In sections \ref{section:whatdoesbstandfor} and \ref{section:bcategory} we review and introduce a number of basic definitions in $b$-geometry, from $b$-manifolds to $b$-differential forms.
In section \ref{section:twopointsofview} we examine the notion of $b$-symplectic from the Poisson-geometer's perspective, prove that $b$-symplectic and $b$-Poisson are equivalent notions and investigate some consequences of this fact: these include the relation between Weinstein's splitting theorem and a $b$-version of Moser's theorem, as well as the role of Weinstein's modular vector field  in this theory.
In section \ref{section:cohomology}, we discuss Mazzeo-Melrose's $b$-analogue of the de Rham theorem, obtain some cohomolgy results for $b$-symplectic manifolds and compare it to two versions of Poisson cohomology.
In section \ref{section:normalforms} we discuss Darboux and Moser theorems for $b$-symplectic manifolds and use these $b$-methods to prove Radko's theorem, in section \ref{section:invariantsassociatedtoabpoissonstructure} we revisit the modular invariants introduced in \cite{guimipi}, and in section \ref{section:theextensionproblem} show that these $b$-methods enable us to generalize Radko's theorem to arbitrary dimension, by addressing the extension problem that we alluded to above: does a regular Poisson structure on $Z$ extend to a $b$-symplectic structure on $M$, and if so in how many different ways?

All these results seem to put the $b$-symplectic category closer to the symplectic world than to the usually cumbersome Poisson world\footnote{In this sense, it is interesting to point out that an  $h$-principle holds  also for $b$-symplectic manifolds as it has been proved by Freijlich \cite{freijlich}.}.

 Section 9 of this paper is devoted to issues that we will investigate in more detail in the future. In section \ref{section:integrability} we show that $b$-symplectic manifolds are integrable, i.e., that the Lie algebroid structure on $M$ defined by $\Pi$ can always be integrated to a symplectic groupoid structure. This turns out to be an easy corollary of a much more general result \cite{marui2} of Crainic-Fernandes, in a future paper we will describe in more detail examples of symplectic groupoids whose existence is a consequence of this result. We also give an explicit model for the symplectic groupoid integrating the exceptional hypersurface $Z$.

Finally, in section \ref{section:integrablesystems}, we discuss completely integrable systems on $b$-symplectic manifolds, i.e., systems $f_1,\ldots,f_n$ of $b$-Poisson commuting functions which are almost everywhere functionally independent. In dimension two, $f$ can be chosen to be nonsingular at points of $Z$ and in fact to be a defining function for $Z$. In dimension four, $f_1$ can be chosen generically to be a defining function for $Z$ and $f_2$ to have standard elliptic and hyperbolic singularities on the (2-dimensional) symplectic leaves of $Z$. Analogous results can be obtained for higher dimensions.
In  \cite{gmps}  we  study  Hamiltonian actions  and integrable systems in the $b$-symplectic context.\footnote{ Since the first version of this paper was posted, other authors got interested in the b-symplectic category and made progress on some interesting and related aspects studied in this paper. We would like to point out here the paper of Geoffrey Scott \cite{scott} about the geometry of  manifolds  with \emph{higher order singularities} called $b^k$-symplectic manifolds. Also many improvements in the study of the topology of the $b$-symplectic manifolds have been carried out in \cite{marcutosorno1}, \cite{cavalcanti} and \cite{frejlichmartinezmiranda}.}

\noindent {\bf{Acknowledgements:}} We have benefited from interesting, useful and lively discussions with Marius Crainic, Rui Loja Fernandes, Pedro Frejlich, Marco Gualtieri, David Martinez Torres, Ryszard Nest, Francisco Presas and Geoffrey Scott. These discussions have enriched and improved this paper.  We are particularly thankful to Geoffrey Scott for carefully reading a first version of this paper and suggesting some important amendments. We deeply thank Ionut Marcut and Boris Osorno for pointing out an error in the statement and proof of Theorem \ref{thm:poissoncohomology} in a previous version of this paper and sending us a copy of their paper \cite{marcutosorno2}. The proof of it is a combination of our Theorem \ref{thm:liealgebroid} and a key lemma in \cite{marcutosorno2}. This theorem can also be found in  \cite{marcutosorno2}.

\section{What does $b$ stand for?}\label{section:whatdoesbstandfor}

We recall the notions of $b$-tangent and $b$-cotangent bundles introduced by Richard Melrose in \cite{Melrose} as a framework to study differential calculus and differential operators on manifolds with boundary, and further studied by Ryszard Nest and Boris Tsygan in \cite{NestandTsygan} in the context of formal deformations of symplectic manifolds with boundary.

Let $(M,\partial M)$ be a manifold with boundary. A neighborhood of a point $p\in\partial M$ is diffeomorphic to the upper half $n$-space
$$H_{+}^n=\{(x_1,\dots, x_n)| x_1\geq 0\},$$
and locally any vector field tangent to $\partial M$ {can be written as a linear combination of $x_1\frac{\partial}{\partial x_1}, \frac{\partial}{\partial x_2}, \ldots, \frac{\partial}{\partial x_n}$ with smooth coefficients. More formally, we can use sheaves to define the tangent and cotangent bundles: let $U\subset M$ be an open set and denote by $\Gamma(U,^b \mathcal{T})$ the set of vector fields on $U$ which are tangent to $\partial M$, then by varying the open set $U$ we obtain a sheaf on $M$. For a boundary point  $p$ this set is generated by $\langle x_1{\frac{\partial}{\partial x_1}}_p,{\frac{\partial}{\partial x_2}}_p\dots {\frac{\partial}{\partial x_n}}_p \rangle$ and therefore, for open sets $U\subset M$ diffeomorphic to $H_{+}^n$, the set $\Gamma(U,^b \mathcal{T})$ is freely generated over $C^{\infty}(U)$ by the sections $x_1{\frac{\partial}{\partial x_1}}, {\frac{\partial}{\partial x_2}},\dots,{\frac{\partial}{\partial x_n}}$. According to the Serre-Swan theorem \cite{swan}, there exists a vector bundle having $\Gamma(U,^b \mathcal{T})$ as local sections.  }

\begin{definition}
The \textbf{$b$-tangent bundle} $^b T(M)$ of a manifold with boundary $(M,\partial M)$ is the vector bundle associated to the locally free sheaf $\Gamma(U,^b \mathcal{T})$. The \textbf{$b$-cotangent bundle} $^b T^*(M)$ is the dual bundle to $^b T(M)$.
\end{definition}

Note that the $b$-cotangent bundle $^b T^*(M)$ is locally generated by the sections: $\frac{dx_1}{x_1},dx_2\dots, dx_n$.

The manifolds which we study in this paper are not manifolds with boundary, but rather manifolds with a distinguished hypersurface. As we will see in section \ref{section:bcategory}, the appropriate notions of tangent and cotangent bundles are basically the ones above\footnote{This was pointed out in \cite{bookCannasWeinstein} \S 17.4 in the context of giving an example of a Lie algebroid associated to the bundle of vectors tangent to $M$ which are furthermore tangent to a fixed hypersurface of $M$.}. For this reason, we adopt the notation $\,^bTM$ and $\,^bT^*M$ and furthermore name our objects of study ``$b$-manifolds''.

Some results proved for manifolds with boundary hold under appropriate translation for our $b$-manifolds, for example the Darboux theorem for closed nondegenerate 2 forms \cite{NestandTsygan} and the Mazzeo-Melrose splitting of the cohomology groups of $(M,\partial M)$ in terms of the cohomology of $M$ and of $\partial M$ \cite{Melrose}, as we will see in the next sections.

\section{Differential forms on $b$-manifolds}\label{section:bcategory}

In this section we introduce $b$-manifolds and $b$-maps, which are respectively the objects and morphisms of the $b$-category, revisit the notions of $b$-tangent and $b$-cotangent bundles. We then define differential $b$-forms and construct a $b$-de Rham cohomology theory.

\subsection{The $b$-tangent and $b$-cotangent bundles}

We begin with some definitions.

\begin{definition}
A \textbf{$b$-manifold} is a pair $(M,Z)$ of an oriented manifold $M$ and an oriented hypersurface $Z\subset M$. A \textbf{$b$-map} is a map $f:(M_1,Z_1)\rightarrow(M_2,Z_2)$ transverse to $Z_2$ and such that $f^{-1}(Z_2)=Z_1$. The \textbf{$b$-category} is the category whose objects are $b$-manifolds and morphisms are $b$-maps.
\end{definition}

\begin{definition}
A \textbf{$b$-vector field} on $M$ is a vector field which is tangent to $Z$ at every point $p\in Z$.
\end{definition}

These vector fields form a Lie subalgebra of the algebra of all vector fields on $M$. Moreover, they also form a projective module over the ring $C^\infty(M)$, and hence are sections of a vector bundle on $M$. We call this vector bundle the \textbf{$b$-tangent bundle} and denote it $^b TM$.

If $v$ is a $b$-vector field, then the restriction $v|_Z$ is everywhere tangent to $Z$, and hence defines a vector field $v_Z$ on $Z$. Thus we have a map $\Gamma(^b TM|_Z)\longrightarrow\Gamma(TZ)$ and since this map is a morphism of $C^\infty(Z)$-modules, it is induced by a vector bundle morphism
\begin{equation}\label{tangentbundlemorphism}
^b TM|_Z\longrightarrow TZ.
\end{equation}

\begin{proposition}
The kernel of the map (\ref{tangentbundlemorphism}) is a line bundle $\LL_Z$ with a canonical nonvanishing section.
\end{proposition}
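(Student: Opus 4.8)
The plan is to identify the kernel in local coordinates adapted to $Z$, exhibit there the obvious nonvanishing local generator, and then check it does not depend on the coordinates, so that it globalizes. First I would fix $p\in Z$ and choose coordinates $(x_1,\dots,x_n)$ on a neighbourhood of $p$ with $Z=\{x_1=0\}$. By the local description of the $b$-tangent bundle recalled in Section \ref{section:whatdoesbstandfor}, the sections $x_1\frac{\partial}{\partial x_1},\frac{\partial}{\partial x_2},\dots,\frac{\partial}{\partial x_n}$ form a local frame of $^bTM$, and their restrictions to $Z$ form a local frame of $^bTM|_Z$. Under the morphism (\ref{tangentbundlemorphism}) one has $\frac{\partial}{\partial x_i}\mapsto\frac{\partial}{\partial x_i}$ for $i\ge 2$, while $x_1\frac{\partial}{\partial x_1}\mapsto 0$ because the underlying ordinary vector field $x_1\frac{\partial}{\partial x_1}$ vanishes identically along $Z$. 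Hence (\ref{tangentbundlemorphism}) is fibrewise surjective and its kernel is, over $Z$, spanned by the single nowhere-zero section $x_1\frac{\partial}{\partial x_1}\big|_Z$; in particular it has constant rank one, so it is a line subbundle $\LL_Z\subset{}^bTM|_Z$ and $x_1\frac{\partial}{\partial x_1}\big|_Z$ is a local nonvanishing section.

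Next I would verify that this local section is independent of the adapted coordinates, and in particular of the defining function $x_1$. Given a second adapted system $(y_1,\dots,y_n)$, both $x_1$ and $y_1$ are defining functions for $Z$, so Hadamard's lemma gives $x_1=y_1\,k$ with $k$ smooth; moreover $k$ is nowhere zero, since $k|_Z=\frac{\partial x_1}{\partial y_1}\big|_Z$ must be nonzero for the change of coordinates to be a diffeomorphism. Expanding $y_1\frac{\partial}{\partial y_1}=\sum_j y_1\frac{\partial x_j}{\partial y_1}\frac{\partial}{\partial x_j}$ in the frame above, the terms with $j\ge 2$ have coefficients divisible by $y_1$ (hence by $x_1$) and so vanish along $Z$, while from $x_1=y_1k$ one computes
\[
y_1\frac{\partial x_1}{\partial y_1}=y_1k+y_1^{2}\frac{\partial k}{\partial y_1}=x_1+\frac{x_1^{2}}{k^{2}}\frac{\partial k}{\partial y_1},
\]
so the $j=1$ term is $x_1\frac{\partial}{\partial x_1}$ plus a multiple of $x_1\frac{\partial}{\partial x_1}$ whose coefficient vanishes along $Z$. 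Therefore $y_1\frac{\partial}{\partial y_1}\big|_Z=x_1\frac{\partial}{\partial x_1}\big|_Z$ in $^bTM|_Z$, the locally defined sections agree on overlaps, and they glue to a global nonvanishing section of $\LL_Z$. (Invariantly this section $v$ is characterized by $\langle v,\frac{df}{f}\rangle=1$ for any local defining function $f$ of $Z$: one checks $\langle x_1\frac{\partial}{\partial x_1},\frac{dx_1}{x_1}\rangle=1$, and replacing $f$ by $uf$ changes $\frac{df}{f}$ by the genuine smooth $1$-form $\frac{du}{u}$, which annihilates $\LL_Z$ along $Z$.)

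I expect the only real obstacle to be the bookkeeping in this coordinate change: one has to be sure that both the off-diagonal contributions $y_1\frac{\partial x_j}{\partial y_1}\frac{\partial}{\partial x_j}$ for $j\ge 2$ and the correction term $\frac{x_1^{2}}{k^{2}}\frac{\partial k}{\partial y_1}\frac{\partial}{\partial x_1}$ genuinely lie in the subsheaf of $b$-vector fields vanishing on $Z$ — which is exactly what forces the restriction to $Z$ to be well defined. Once Hadamard's lemma is set up correctly, this is routine and the globalization is immediate.
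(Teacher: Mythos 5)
Your proof is correct, and at heart it is the same construction as the paper's: the canonical section is ``a defining function times a vector field transverse to $Z$'', which in your adapted coordinates is $x_1\frac{\partial}{\partial x_1}\big|_Z$ and in the paper is $w=f\,v|_Z$ for a global defining function $f$ and any vector field $v$ with $df_p(v_p)=1$ along $Z$. The difference is one of packaging. The paper uses the orientation hypothesis to produce a global defining function at the outset and dismisses the independence of the choices of $f$ and $v$ as easily checked, whereas you work in local adapted charts, carry out that independence check explicitly via Hadamard's lemma ($x_1=y_1k$ with $k$ nonvanishing near $Z$), and then glue; along the way you also verify what the paper leaves implicit, namely that the morphism (\ref{tangentbundlemorphism}) is fibrewise surjective so that its kernel really is a line bundle $\LL_Z$, and you record the useful invariant characterization $\langle w,\frac{df}{f}\rangle=1$, consistent with the identification of $T_p^*Z$ inside $^bT_p^*M$ as the annihilator of $w_p$. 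Your bookkeeping is right: the $j\ge 2$ terms have coefficients vanishing on $Z$, and the $j=1$ term is $\bigl(1+\tfrac{x_1}{k^{2}}\tfrac{\partial k}{\partial y_1}\bigr)x_1\frac{\partial}{\partial x_1}$, whose coefficient equals $1$ on $Z$, giving $y_1\frac{\partial}{\partial y_1}\big|_Z=x_1\frac{\partial}{\partial x_1}\big|_Z$. A small bonus of your route is that it never needs a global defining function, hence does not invoke the orientability of $Z$; the economy of the paper's route is that, granted such an $f$, the section is produced globally in one line.
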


\begin{proof}
Since $Z$ is oriented, there exists a defining function for $Z$, i.e.\ a $b$-map $f:(M,Z)\rightarrow(\RR,0)$. Let $v$ be a vector field on $M$ with $df_p(v_p)=1$ for all $p\in Z$. Then $w=f v|_Z$ is a nonvanishing section of $\LL_Z$, and it is easily checked that its definition doesn't depend on the choice of the function $f$ or the vector field $v$.
\end{proof}

We call this nonvanishing section $w$ of $\LL_Z$ the \textbf{normal $b$-vector field} of the $b$-manifold $(M,Z)$.

Note that at points $p\in M\setminus Z$, the $b$-tangent space coincides with the usual tangent space $^b T_p M=T_p M$, whereas at points $p\in Z$, there is a surjective map
$$^b T_p M\rightarrow T_p Z$$
with kernel spanned by $w_p$.

We define the \textbf{$b$-cotangent bundle} of $M$ to be the vector bundle $^b T^*M$ dual to $^b TM$. Then, at points $p\in M\setminus Z$, the $b$-cotangent space coincides with the usual cotangent space: $^b T^*_p M=T^*_p M$. At points $p\in Z$, there is an embedding
$$T_p^*Z\rightarrow  \,^b T^*_p M$$
whose image is $\left\{l\in ^b T^*_p M| l(w_p)=0\right\}$.

Given a defining function $f$ for $Z$, let $\mu\in\Omega^1(M\setminus Z)$ be the one-form $\frac{df}{f}$. If $v$ is a $b$-vector field then the pairing $\left\langle v,\mu\right\rangle\in C^\infty(M\setminus Z)$ extends smoothly over $Z$ and hence $\mu$ itself extends smoothly over $Z$ as a section of $^b T^*M$. Moreover, $\mu_p(w_p)=1$ for $p\in Z$, so given $f$ we get a splitting
\begin{equation}\label{eq:splitting}
^b T^*_p M=T_p^* Z+\text{span}\left\{\mu_p\right\}.
\end{equation}
For ease of notation, we will write $\mu_p=\frac{df_p}{f}$, even though the expression on the right hand side is not well-defined for $p\in Z$.

\subsection{The $b$-de Rham complex}\label{bDeRham}

For each $k$, let $^b\Omega^k(M)$ denote the space of \textbf{$b$-de Rham $k$-forms}, i.e., sections of the vector bundle $\Lambda^k(^b T^*M)$. The usual space of de Rham $k$-forms sits inside this space in a somewhat nontrivial way: given $\mu\in\Omega^k(M)$, we interpret it as a section of $\Lambda^k(^bT^*M)$ by the convention
$$\mu_p\in\Lambda^k(T_p^*M)=\Lambda^k(^b T_p^*M)  \text{ at } p\in M\setminus Z$$
$$\mu_p=(i^*\mu)_p\in\Lambda^k(T_p^*Z)\subset\Lambda^k(^b T_p^*M) \text{ at } p\in Z$$
where $i:Z\hookrightarrow M$ is the inclusion map.

With these conventions it is easy to see that, having fixed a defining function $f$, every $b$-de Rham $k$-form can be written as
\begin{equation}\label{eq:bDeRham}
\omega=\alpha\wedge\frac{df}{f}+\beta, \text{ with } \alpha\in\Omega^{k-1}(M) \text{ and } \beta\in\Omega^k(M).
\end{equation}
Moreover, while $\alpha$ and $\beta$ are not unique, we claim:

\begin{proposition}\label{prop:unique}
At $p\in Z$, $\alpha_p$ and $\beta_p$ are unique.
\end{proposition}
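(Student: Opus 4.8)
The plan is to make the statement pointwise and reduce it to linear algebra in $\Lambda^k(\,^bT^*_pM)$. Fix $p\in Z$. First I would unwind the conventions of Section \ref{bDeRham}: when the decomposition (\ref{eq:bDeRham}) is evaluated at a point of $Z$, the de Rham forms $\alpha$ and $\beta$ enter only through their pullbacks under $i\colon Z\hookrightarrow M$, so that
\[
\omega_p=\alpha_p\wedge\mu_p+\beta_p,\qquad \alpha_p=(i^*\alpha)_p,\quad \beta_p=(i^*\beta)_p,
\]
with $\alpha_p\in\Lambda^{k-1}(T^*_pZ)$, $\beta_p\in\Lambda^k(T^*_pZ)$, and $\mu=\frac{df}{f}$ the genuine section of $\,^bT^*M$ with $\mu_p(w_p)=1$, where $w$ is the normal $b$-vector field. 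Thus everything comes down to recovering $\alpha_p$ and $\beta_p$ from $\omega_p$ alone.

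For this I would contract with $w_p$. Since $w_p$ spans the kernel of the surjection $\,^bT_pM\to T_pZ$, the pairing $\langle w_p,l\rangle$ vanishes for every $l$ in (the image of) $T^*_pZ$, so $\iota_{w_p}$ annihilates $\Lambda^j(T^*_pZ)$ for all $j$; in particular $\iota_{w_p}\alpha_p=\iota_{w_p}\beta_p=0$, while $\iota_{w_p}\mu_p=1$. Expanding $\iota_{w_p}$ as an antiderivation then gives $\iota_{w_p}\omega_p=(-1)^{k-1}\alpha_p$, an explicit formula for $\alpha_p$ in terms of $\omega_p$; and then $\beta_p=\omega_p-\alpha_p\wedge\mu_p$ is determined as well. (Equivalently, without contracting, the splitting (\ref{eq:splitting}) induces a direct sum $\Lambda^k(\,^bT^*_pM)=\Lambda^k(T^*_pZ)\oplus\bigl(\Lambda^{k-1}(T^*_pZ)\wedge\mu_p\bigr)$ in which $\beta_p$ and $\alpha_p\wedge\mu_p$ are the two components of $\omega_p$, and $\eta\mapsto\eta\wedge\mu_p$ is injective on $\Lambda^{k-1}(T^*_pZ)$ because it carries a basis to part of a basis of $\Lambda^k(\,^bT^*_pM)$.)

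There is no genuine obstacle here: the content is just the linear algebra of the splitting (\ref{eq:splitting}), and the only care needed is the bookkeeping — that a de Rham form restricted to a point of $Z$ contributes via $i^*$, and that $\frac{df}{f}$ really is a smooth $b$-one-form pairing to $1$ with $w$. It is worth emphasizing that the decomposition (\ref{eq:bDeRham}) is highly non-unique as an identity of forms on all of $M$ (for instance one may replace $\alpha$ by $\alpha+f\gamma$ and $\beta$ by $\beta-\gamma\wedge df$ for any $\gamma\in\Omega^{k-1}(M)$, changing neither side along $Z$), so Proposition \ref{prop:unique} is the strongest uniqueness one could hope for; in fact the contraction formula shows that $\alpha_p$ does not even depend on the choice of defining function $f$.
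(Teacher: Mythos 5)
Your argument is correct and is essentially the paper's: the proof there is exactly the appeal to the splitting (\ref{eq:splitting}) together with the convention that at $p\in Z$ the forms $\alpha_p$ and $\beta_p$ are read as elements of $\Lambda^{k-1}(T_p^*Z)$ and $\Lambda^k(T_p^*Z)$, which is your parenthetical direct-sum decomposition of $\Lambda^k(\,^bT_p^*M)$. Your contraction with the normal $b$-vector field $w_p$ just makes the projection explicit (and as a small bonus exhibits $\alpha_p=(-1)^{k-1}\iota_{w_p}\omega_p$ as independent of the defining function, anticipating Proposition \ref{thetamu}(a)).
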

\begin{proof}
This follows from the direct sum decomposition (\ref{eq:splitting}) and the fact that at $p\in Z$, $\alpha_p$ and $\beta_p$ have to be interpreted as elements of $\Lambda^{k-1}(T_p^*Z)$ and $\Lambda^k(T_p^*Z)$.
\end{proof}

The decomposition (\ref{eq:bDeRham}) enables us to extend the exterior $d$ operator to $^b\Omega(M)$ by setting
$$d\omega=d\alpha\wedge\frac{df}{f}+d\beta.$$
The right hand side is well defined and agrees with the usual exterior $d$ operator on $M\setminus Z$ and also extends smoothly over $M$ as a section of $\Lambda^{k+1}(^b T^*M)$. Note that $d^2=0$, which allows us to form a complex of $b$-forms, the $b$-de Rham complex:

$$0\rightarrow\,^b\Omega^0(M)\stackrel{d}{\longrightarrow}\,^b\Omega^1(M)\stackrel{d}{\longrightarrow}\,^b\Omega^2(M)\stackrel{d}{\longrightarrow}\ldots\rightarrow0$$

Also note that if $\omega=\alpha\wedge\frac{df}{f}+\beta\in\,^b\Omega^k(M)$ is closed, it is not necessarily true that $\alpha$ and $\beta$ are closed, take for example $\beta=0$ and $\alpha=\mu\wedge df$ with $\mu$ chosen so that $\alpha$ is not closed. {For example, for $k=2$ and $f$ the coordinate $x$, taking $\mu$ to be a different coordinate $y$ yields $\alpha=y \, dx$, which is not a closed form.}

Following the usual notation in the context of differential geometry, $\omega|_Z$ denotes a section $Z\to\, ^b T^*M|_Z$ (in contrast with $i^*\omega$, which would be a section $Z\to T^*Z$). In general, $b$-de Rham forms ``explode'' at $Z$. Those that vanish at $Z$ are in fact honest de Rham forms:

\begin{proposition}\label{honest}
If $\omega\in\,^b\Omega^k(M)$ is such that $\omega|_Z=0$, then $\omega\in\Omega^k(M)$.
\end{proposition}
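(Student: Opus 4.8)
The plan is to reduce the statement to a local computation near $Z$ and then apply Hadamard's lemma. First I would observe that the conclusion is local in nature: if $\omega$ agrees on each member of an open cover of $M$ with (the image of) an honest de Rham form, then these local honest forms agree on overlaps — they all equal $\omega$ there, and the embedding $\Omega^\bullet\hookrightarrow\,^b\Omega^\bullet$ is injective since it is the identity on the dense open set $M\setminus Z$ — and hence patch to a global $\eta\in\Omega^k(M)$ with $\omega=\eta$ in $\,^b\Omega^k(M)$. So it suffices to fix $p\in Z$ and a coordinate chart $U$ around $p$ with coordinates $(x_1,\dots,x_n)$ in which $Z\cap U=\{x_1=0\}$ and the defining function is $f=x_1$. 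On $U$ the bundle $\Lambda^k(^bT^*U)$ has the smooth nonvanishing local frame consisting of the forms $dx_I$ with $I\subset\{2,\dots,n\}$, $|I|=k$, together with the forms $\frac{dx_1}{x_1}\wedge dx_J$ with $J\subset\{2,\dots,n\}$, $|J|=k-1$.

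Next I would translate the hypothesis. Writing $\omega$ in this frame with smooth coefficient functions, the condition $\omega|_Z=0$ — i.e.\ that $\omega$, as a section of $\Lambda^k(^bT^*M)$, vanishes at every point of $Z\cap U$ — forces every one of these coefficients to vanish on $\{x_1=0\}$. By Hadamard's lemma each coefficient is then divisible by $x_1$, so that $\omega=x_1\,\omega'$ for some $\omega'\in\,^b\Omega^k(U)$.

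The final step is the observation that multiplying by the defining function cancels the pole. Using the decomposition (\ref{eq:bDeRham}), write $\omega'=\alpha\wedge\frac{df}{f}+\beta$ with $\alpha\in\Omega^{k-1}(U)$ and $\beta\in\Omega^k(U)$; then $\omega=x_1\,\omega'=\alpha\wedge dx_1+x_1\beta$, which is manifestly an honest de Rham $k$-form on $U$. That this honest form, when re-embedded into $\,^b\Omega^k(U)$, really equals $\omega$ follows because the two smooth sections of $\Lambda^k(^bT^*U)$ agree on the dense open set $U\setminus Z$, where the embedding $\Omega^k\hookrightarrow\,^b\Omega^k$ is the identity. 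Patching over a cover of $M$ completes the proof.

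I do not expect a serious obstacle here — the essential content is Hadamard's lemma. The one point that needs care is the precise reading of the hypothesis: $\omega|_Z$ denotes the section $Z\to\Lambda^k(^bT^*M)|_Z$, a strictly stronger vanishing than $i^*\omega=0$, and it is exactly this stronger statement — which forces the coefficients of the singular terms $\frac{dx_1}{x_1}\wedge dx_J$, and not merely those of the regular part $\beta$, to vanish on $Z$ — that is needed in order to absorb the factor $x_1$ and kill the pole.
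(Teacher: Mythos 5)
Your proof is correct and rests on the same idea as the paper's: the hypothesis $\omega|_Z=0$, read as vanishing of the section of $\Lambda^k(\,^bT^*M)$ along $Z$, lets you divide by the defining function and absorb the pole. The paper does this globally in one step using the decomposition (\ref{eq:bDeRham}) — from $\alpha|_Z=0$ it forms $\alpha_1=\alpha/f$ and writes $\omega=\alpha_1\wedge df+\beta\in\Omega^k(M)$ — whereas you carry out the same division in local coordinates via Hadamard's lemma and then patch, which is only a cosmetic difference.
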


\begin{proof}
Given $\omega=\alpha\wedge\frac{df}{f}+\beta\in\,^b\Omega^k(M)$, the condition $\omega|_Z=0$ as a section of $^b T^*(M)|_Z$ implies that $\alpha|_Z=0$ and $\beta|_Z=0$.
If  $\alpha|_Z=0$ then $\alpha_1:=\frac{\alpha}{f}$ is in $^b\Omega^{k-1}(M)$. Thus $\omega=\alpha_1\wedge df+\beta$ is in $\Omega^k(M)$.
\end{proof}

\begin{remark}\label{integrate}
Even though $b$-forms ``explode'' at $Z$, it is possible to integrate compactly supported $b$-forms of top degree over $M$: for $\omega\in\,^b\Omega^d(M)$ compactly supported we define the integral of $\omega$ over $M$ to be
$$\int_M\omega=\lim_{\varepsilon\to 0}\int_{|f|>\varepsilon}\omega,$$
where $f$ is a defining fuction for $Z$. This limit exists and is independent of the choice of $f$, the proof is similar to that of Theorem 2 in \cite{Radko}.
\end{remark}

\section{$b$-Symplectic Manifolds and $b$-Poisson Manifolds}\label{section:twopointsofview}
\subsection{$b$-symplectic manifolds}
In this section we introduce the notion of symplectic for the $b$-category, and observe that $b$-symplectic manifolds are also Poisson manifolds. A simple example is the $b$-manifold $(M,Z)$ where $M=\RR^{2n}$ with coordinates $x_1,y_1,\ldots,x_n,y_n$ and $Z$ is the hyperplane $y_1=0$. Consider the closed $b$-form
$$\omega=d x_1\wedge \frac{d y_1}{y_1}+\sum_{i=2}^n d x_i\wedge d y_i.$$
This form is non degenerate in the sense that $\omega^n$ is a well defined, nonvanishing $b$-form. That this example is the local prototype of all $b$-symplectic manifolds is the content of the $b$-Darboux theorem (Theorem \ref{theorem:Darboux2}).

Dualizing $\omega$ we obtain the bivector field
$$\Pi=y_1\frac{\partial}{\partial x_1}\wedge \frac{\partial}{\partial y_1}+\sum_{i=2}^{n} \frac{\partial}{\partial x_i}\wedge\frac{\partial}{\partial y_i},$$
which is a Poisson structure on $M$ since $\left[\Pi,\Pi\right]=0$. The symplectic foliation for this Poisson structure contains two open leaves --- the upper and lower half spaces given by $y_1>0$ and $y_1<0$ ---, and the union of the remaining leaves is the hyperplane $Z$, where $\Pi^n$ vanishes --- these leaves are $(2n-2)$-dimensional planes through the different levels of $x_1$.

\begin{definition}
Let $(M,Z)$ be a $2n$-dimensional $b$-manifold and $\omega\in\,^b\Omega^2(M)$ a closed $b$-form. We say that $\omega$ is \textbf{$b$-symplectic} if $\omega_p$ is of maximal rank as an element of $\Lambda^2(\,^b T_p^* M)$ for all $p\in M$.
\end{definition}

\begin{example} Analogous to what happens in the symplectic case, the $b$-cotangent bundle of a $b$-manifold $(M,Z)$ is a $b$-symplectic manifold. If $y_i$ are local coordinates for the manifold $M$ on a neighborhood of a point in $Z$, with $Z$ defined locally by $y_1=0$, and $x_i$ are the fiber coordinates on $^bT^*M$, then the canonical one-form is given in these coordinates by
$$x_1 \frac{d y_1}{y_1}+\sum_{i=2}^n x_i d y_i,$$
and its exterior derivative
$$d x_1\wedge \frac{d y_1}{y_1}+\sum_{i=2}^n d x_i\wedge d y_i$$
is a $b$-symplectic form on $^bT^*M$.
\end{example}

As seen in (\ref{eq:bDeRham}), fixing a defining function $f$ for $Z$ we can decompose the symplectic form as
\begin{equation}\label{eq:bsymplecticform}
\omega=\alpha\wedge\frac{df}{f}+\beta, \text{ with } \alpha\in\Omega^1(M) \text{ and } \beta\in\Omega^2(M).
\end{equation}

\begin{proposition}\label{thetamu}
Let $\tilde{\alpha}=i^*\alpha$ and $\tilde{\beta}=i^*\beta$, where $i:Z\hookrightarrow M$ is the inclusion then
the forms $\tilde{\alpha}$ and $\tilde{\beta}$ are closed. Furthermore:
 \begin{enumerate}[(a)]
\item The form $\tilde{\alpha}$ is nowhere vanishing and intrinsically defined in the sense that it does not depend on the splitting (\ref{eq:bsymplecticform}). In particular, the codimension-one foliation of $Z$ defined by $\tilde{\alpha}$ is intrinsically defined.

\item For each leaf $L\stackrel{i_L}\hookrightarrow Z$ of this foliation, the form $i^*_L\tilde{\beta}$ is intrinsically defined, and is a symplectic form on $L$.
\item In (\ref{eq:bsymplecticform}) we can assume without loss of generality that:

\begin{itemize}
\item $\alpha$ and $\beta$ are closed;
\item $\alpha\wedge\beta^{n-1}\wedge df$ is nowhere vanishing;
\item and in particular $i^*(\alpha\wedge\beta^{n-1})$ is nowhere vanishing.

\end{itemize}
\end{enumerate}
\end{proposition}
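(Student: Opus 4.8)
The plan is to extract everything from the identity $d\omega=0$ together with the nondegeneracy of $\omega$ along $Z$, using the uniqueness statement of Proposition~\ref{prop:unique} and the pointwise splitting~(\ref{eq:splitting}). I would first observe that $\frac{df}{f}$ is a closed $b$-form, so $0=d\omega=d\alpha\wedge\frac{df}{f}+d\beta$; restricting this identity of $b$-forms to $Z$ and applying Proposition~\ref{prop:unique} forces $i^*(d\alpha)=0$ and $i^*(d\beta)=0$, i.e.\ $d\tilde{\alpha}=0$ and $d\tilde{\beta}=0$.

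For (a) I would get the intrinsic character of $\tilde{\alpha}$ from the observation that any two defining functions differ by $g=hf$ with $h$ smooth and nowhere zero, so that $\frac{dg}{g}=\frac{df}{f}+\frac{dh}{h}$ with $\frac{dh}{h}\in\Omega^1(M)$; thus changing $f$ leaves $\alpha$ unchanged and only alters $\beta$, and together with Proposition~\ref{prop:unique} this shows $\tilde{\alpha}=i^*\alpha$ depends only on $\omega$ (equivalently $\tilde{\alpha}=-\iota_w(\omega|_Z)$, with $w$ the canonical normal $b$-vector field). Everything else rests on one pointwise computation: at $p\in Z$ write $\omega_p=\tilde{\alpha}_p\wedge\mu_p+\tilde{\beta}_p$ with $\tilde{\alpha}_p\in\Lambda^1 T_p^*Z$ and $\tilde{\beta}_p\in\Lambda^2 T_p^*Z$; since $\dim Z=2n-1$ we have $\tilde{\beta}_p^{\,n}=0$, hence $\omega_p^n=n\,\tilde{\alpha}_p\wedge\tilde{\beta}_p^{\,n-1}\wedge\mu_p$, which is nonzero because $\omega$ is $b$-symplectic. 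So $\tilde{\alpha}_p\wedge\tilde{\beta}_p^{\,n-1}$ is a volume form on $T_pZ$; in particular $\tilde{\alpha}_p\neq0$, which finishes (a), and then $\ker\tilde{\alpha}\subset TZ$ is a corank-one distribution, involutive by Frobenius since $d\tilde{\alpha}=0$, defining the claimed intrinsic foliation.

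For (b) I would use that on a leaf $L$ one has $TL=\ker\tilde{\alpha}|_L$, so $i_L^*\tilde{\alpha}=0$; any change of the splitting alters $\tilde{\beta}$ only by a term of the form $\tilde{\alpha}\wedge(\cdots)$ (from changing $f$) or by $i^*\gamma\wedge i^*(df)=0$ (for fixed $f$, since $i^*df=0$), and pulling back to $L$ kills the first type, so $i_L^*\tilde{\beta}$ is intrinsic; it is closed since $\tilde{\beta}$ is, and choosing at $p$ a basis of $T_pZ$ adapted to $T_pZ=\RR v_0\oplus T_pL$ with $\tilde{\alpha}_p(v_0)=1$ gives $\tilde{\alpha}_p\wedge\tilde{\beta}_p^{\,n-1}=\tilde{\alpha}_p\wedge(i_L^*\tilde{\beta}_p)^{\,n-1}$, so the computation above yields $(i_L^*\tilde{\beta}_p)^{\,n-1}\neq0$ and $i_L^*\tilde{\beta}$ is symplectic. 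For (c) I would fix a tubular neighbourhood $U$ of $Z$ with projection $\pi\colon U\to Z$ and set $\alpha:=\pi^*\tilde{\alpha}$, which is closed with $i^*\alpha=\tilde{\alpha}$; starting from any initial decomposition $\omega=\alpha_0\wedge\frac{df}{f}+\beta_0$ on $U$, part (a) gives $i^*\alpha_0=\tilde{\alpha}=i^*\alpha$, hence $\alpha-\alpha_0=f\gamma$ for a smooth $1$-form $\gamma$ and $\beta:=\omega-\alpha\wedge\frac{df}{f}=\beta_0-\gamma\wedge df$ is a genuine smooth $2$-form, which is closed because $d\alpha=0=d(\frac{df}{f})$ and $d\omega=0$. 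Finally $\omega^n=n\,\alpha\wedge\beta^{n-1}\wedge\frac{df}{f}+\beta^n$, and in coordinates with $f=x_1$ the coefficient of $\omega^n$ restricted to $Z$ equals, up to the nonzero factor $n$ and a sign, the coefficient of the ordinary top-form $\alpha\wedge\beta^{n-1}\wedge df$; so $b$-symplecticity of $\omega$ makes $\alpha\wedge\beta^{n-1}\wedge df$ nonvanishing on $Z$, hence near $Z$, and pulling back to $Z$ gives $i^*(\alpha\wedge\beta^{n-1})=\tilde{\alpha}\wedge\tilde{\beta}^{n-1}\neq0$.

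The one non-formal step is (c): one must choose $\alpha$ inside its allowed class — not just any smooth representative — so that $\beta=\omega-\alpha\wedge\frac{df}{f}$ comes out \emph{simultaneously} smooth and closed, which is exactly what forces $\alpha=\pi^*\tilde{\alpha}$, and then one must convert the $b$-nondegeneracy of $\omega$ at $Z$ into the nonvanishing of $\alpha\wedge\beta^{n-1}\wedge df$ by extracting the pole. Everything else reduces to Proposition~\ref{prop:unique} and the pointwise identity $\omega_p^n=n\,\tilde{\alpha}_p\wedge\tilde{\beta}_p^{\,n-1}\wedge\mu_p$.
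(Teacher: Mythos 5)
Your proposal is correct and follows essentially the same route as the paper: closedness and the pointwise statements are extracted via Proposition \ref{prop:unique}, parts (a)--(b) come from the change-of-defining-function computation $\frac{dg}{g}=\frac{df}{f}+d(\log|h|)$ together with nondegeneracy of $\omega$ along $Z$, and part (c) is obtained exactly as in the paper by replacing $\alpha$ with $\pi^*\tilde{\alpha}$ on a tubular neighborhood and reading off the nonvanishing of $\alpha\wedge\beta^{n-1}\wedge df$ from $\omega^n\neq 0$ at (hence near) $Z$. The only slip is cosmetic: $i^*(\alpha-\alpha_0)=0$ gives $\alpha-\alpha_0=f\gamma+h\,df$ rather than just $f\gamma$, but the extra term wedges to zero against $\frac{df}{f}$, so your conclusion that $\beta$ is smooth and closed is unaffected.
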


\begin{proof}

From (\ref{eq:bsymplecticform}) we get $d\alpha\wedge\frac{df}{f}+d\beta=0$ and by Proposition \ref{prop:unique} the forms $d\alpha_p$ and $d\beta_p$ are zero as elements of $\Lambda^2(T_p^* Z)$ and $\Lambda^3(T_p^* Z)$ for all $p\in Z$, i.e., $d i^*\alpha$ and $d i^*\beta$ are zero.

\begin{enumerate}[(a)]
\item As seen in (\ref{eq:splitting}), the cotangent space $^b T_p^* M$ for $p\in Z$ is the direct sum $T_p^*Z+\text{span}\left\{\frac{df_p}{f}\right\}$. If $\tilde{\alpha}_p=0$ we would have $\omega_p=\tilde{\beta}_p\in\Lambda^2(T_p^* Z)$ and consequently $\text{rank}(\omega_p)<2n$, so we conclude that $\tilde\alpha$ must be nonvanishing. Replacing $f$ by another defining function $g$, we have $f=g h$, with $h$ nonvanishing on $Z$. Near $Z$ we have $\frac{df}{f}=\frac{dg}{g}+d(\log \left|h\right|)$ and (\ref{eq:bsymplecticform}) becomes
$$\omega=\alpha\wedge\frac{dg}{g}+\beta_1,$$
where $\beta_1=\beta+\alpha\wedge d(\log \left|h\right|)$. Then, for $\tilde{\beta}_1=\tilde{\beta}+\tilde{\alpha}\wedge d(\log \left|h\right|)|_Z$, \begin{equation}\label{eq:omegathetamu}
\omega|_Z=\tilde{\alpha}\wedge\frac{dg}{g}|_Z+\tilde{\beta}_1.
\end{equation}

\item For a leaf $L$ of the foliation defined by $\tilde{\alpha}$, by (\ref{eq:omegathetamu}) we have $i^*_L\tilde{\beta}_1=i^*_L\tilde{\beta}$. If $i^*_L\tilde{\beta}$ were of rank smaller than $(2n-2)$ at some $p\in Z$, then $\omega_p=\tilde{\alpha}_p\wedge\frac{df_p}{f}$ would be of rank smaller than $n$ as an element of $^b \Lambda^2(T_p^* M)$.

\item Let $\mathcal{U}\cong Z\times(-1,1)$ be a tubular neighborhood of $Z$ such that the defining function $f$ is given locally by $\mathcal{U}\approx Z\times (-1,1)\longrightarrow (-1,1)$. Let $\pi$ be the projection $\mathcal{U}\cong Z\times (-1,1)\longrightarrow Z$. Because $i^*\alpha$ is closed we can write
$$\alpha=\pi^*i^*\alpha+h df$$
for some $h\in \mathcal C^{\infty}(\mathcal{U})$. By replacing $\alpha$ by $\pi^*i^*\alpha$ in (\ref{eq:bsymplecticform}) we can assume that $\alpha$ is closed, and from (\ref{eq:bsymplecticform}) it now follows that $\beta$ is closed. Moreover, since $\omega^n$ is nowhere vanishing as a section of $\Lambda^2(^b T^* M)$ the form $\alpha\wedge\beta^{n-1}\wedge df$ is nowhere vanishing.

\end{enumerate}
\end{proof}

\begin{remark} By Proposition \ref{prop:unique}, the two-form $\tilde{\beta}$ is a symplectic invariant of the pair $(\omega,f)$. Moreover, the proof of Proposition \ref{thetamu} (a) describes how $\tilde{\beta}$ depends on the choice of the defining function. describes how $\tilde{\beta}$ depends on the choice of the defining function.
\end{remark}

\begin{remark}
It is easy to see that $\tilde\alpha$ and $\tilde\beta$  are intrinsically defined since  $\alpha$ and $ \beta$ are intrinsically defined modulo summands of the form $hdf$, $h\in\mathcal C^\infty(U)$ and $\mu\wedge df$, $\mu\in \Omega^1(U)$.

\end{remark}

\begin{remark}\label{remark}
Since $\tilde\beta$ is closed and $\tilde{\beta}^{n-1}$ is non-vanishing, the Darboux theorem for ${\tilde\beta}$  tells us that locally around each point $p\in Z$ there exist coordinates $x_1, x_2, y_2, \dots, x_n, y_n$ such that $\tilde{\beta}=\sum_{i=2}^n d x_i\wedge dy_i$. Because $\tilde{\alpha}$ is locally exact and $\tilde{\alpha}\wedge \tilde{\beta}^{n-1}$ is nowhere vanishing, we can additionally assume that locally  $\alpha=d x_1$. Setting $y_1=f$ we have:
\begin{equation}\label{eq:darbouxz}
\omega_{\vert Z}=dx_1\wedge \frac{dy_1}{y_1}+\sum_{i=2}^n dx_i\wedge dy_i,
\end{equation}
and for the dual bivector field $\Pi\in \Gamma(\Lambda^2(^b T^* M))$:
$$\Pi_{\vert Z}=\frac{\partial}{\partial x_1}\wedge y_1\frac{\partial}{ \partial y_1} +\sum_{i=2}^n \frac{\partial}{\partial x_i}\wedge\frac{\partial}{ \partial y_i}$$
This shows how the Poisson bivector field $\Pi$ on $M$ induces a regular Poisson structure on $Z$ whose symplectic leaves are the level sets of $y_1$ with symplectic structure on each leaf given by $\sum_{i=2}^n dx_i\wedge dy_i$. Stated more intrinsically, $L\stackrel{i_L}\hookrightarrow Z$ is a leaf of this symplectic foliation if and only if $i^*_L\tilde{\alpha}=0$, the symplectic structure on $L$ is given by $i^*_L\tilde{\beta}$.

A final comment regarding (\ref{eq:darbouxz}): we will show below that this \lq\lq Darboux theorem\rq\rq for $\omega_{\vert Z}$ is a consequence of a Darboux theorem for $\omega$ itself (see section \ref{sec:darboux} , Theorem \ref{theorem:Darboux2}).

\end{remark}
\subsection{$b$-Poisson manifolds}

We now look at the Poisson counterparts of $b$-symplectic manifolds, and will prove in the next section that these two notions are equivalent.
This notion corresponds to non-degenerate Poisson structures on $b$-manifolds which for short we call $b$-Poisson structures. One could consider other Poisson structures on pairs $(M,Z)$ but the ones described below are the object of this paper.

\begin{definition}\label{definition:firstb}
Let $(M^{2n},\Pi)$ be an oriented Poisson manifold such that the map
$$p\in M\mapsto(\Pi(p))^n\in\Lambda^{2n}(TM)$$
is transverse to the zero section, then $Z=\{p\in M|(\Pi(p))^n=0\}$ is a hypersurface and we say that $\Pi$ is a \textbf{$b$-Poisson structure} on $(M,Z)$ and $(M,Z)$ is a \textbf{$b$-Poisson manifold}.
\end{definition}

In this subsection we give some examples (compact and otherwise) of $b$-Poisson structures. In section \ref{section:theextensionproblem} we will find a
procedure to construct examples of $b$-Poisson manifolds from regular Poisson manifolds with certain vanishing obstruction classes.

\begin{example}
The Lie algebra of the affine group of dimension 2 is a model for noncommutative Lie algebras in dimension 2, its algebra structure is given by $[e_1,e_2]=e_2$, where $e_1,e_2$ form a basis. We can naturally write this (bilinear) Lie algebra structure as the Poisson structure
$$\Pi=y\frac{\partial}{\partial x }\wedge \frac{\partial}{\partial y },$$
which is dual to the $b$-symplectic form $\omega=dx\wedge \frac{dy}{y}$. In this example the exceptional hypersurface $Z$ is the $x$-axis, it is the union of symplectic leaves of dimension 0 (points on the line), and the upper and lower half-planes are symplectic open leaves of dimension 2.
\end{example}

\begin{example}
On the sphere $\SSS^2$ with the usual coordinates $(h,\theta)$, the Poisson structure $\Pi=h\frac{\partial}{\partial h}\wedge\frac{\partial}{\partial \theta}$ vanishes transversally along the equator $Z=\{h=0\}$ and hence is a $b$-Poisson structure on $(\SSS^2,Z)$. {More generally, take any orientable surface $M$ and a curve $Z$ on it with defining function $f$. Let $\Omega$ a volume form on $M$, and $\Pi_\Omega$ be the bivector field dual to that form. Then $f\Pi_\Omega$ is a $b$-Poisson structure on $(M,Z)$.}

\end{example}

In dimension 2, these structures were studied and classified by Radko in  \cite{Radko} under the name of \emph{topologically stable Poisson structures}. In this case, $Z$ is a union of smooth curves and each point in these curves is a symplectic leaf of $Z$. In \cite{Radko}, Radko proves that the following ingredients give a complete classification of $b$-Poisson manifolds of dimension 2:

\begin{itemize}
\item The set of curves $\gamma_1,\dots,\gamma_n$ along which the Poisson structure vanishes;

\item The periods along the curves $\gamma_1,\dots,\gamma_n$ of a modular vector field\footnote{For a precise definition of modular vector field see section \ref{subsection:modularvectorfield}.} on $M$ associated to the volume form $\omega_{\Pi}$, the two-form dual to $\Pi$, on the complement of Z;

\item The regularized Liouville volume of $(M,\Pi)$, which is a correction along $Z$ of the natural volume associated to the Poisson structure, necessary because the original volume form $\omega_\Pi$ associated to the Poisson structure $\Pi$ blows up at $Z$. This regularized Liouville volume is  the integral $\int_M\omega_\Pi$ as defined in Remark \ref{integrate}.
\end{itemize}

The following classification holds:

\begin{theorem}[Radko] The set of curves, modular periods and regularized Liouville volume completely determines, up to Poisson diffeomorphisms, the $b$-Poisson structure on a compact surface $M$.
\end{theorem}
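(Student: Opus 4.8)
The plan is to reduce the classification to a Moser-type argument in the $b$-symplectic category, treating Radko's three invariants as the complete set of data needed to match two $b$-Poisson structures by a diffeomorphism. First I would dualize: a $b$-Poisson structure $\Pi$ on a compact oriented surface $(M,Z)$ corresponds to a $b$-symplectic form $\omega_\Pi \in \,^b\Omega^2(M)$, so it suffices to show that two such forms $\omega_0, \omega_1$ sharing the same set of vanishing curves, the same modular periods along those curves, and the same regularized Liouville volume are related by a $b$-symplectomorphism. After an initial diffeomorphism of $M$ we may assume the zero sets $Z_0 = Z_1 = Z$ coincide, using that an orientation-preserving diffeomorphism of a compact surface is determined up to isotopy by how it permutes a finite collection of disjoint curves, which is pinned down by matching the modular periods curve by curve.

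Second, I would work locally near $Z$: by the $b$-Darboux theorem and the local normal form for $\omega|_Z$ recorded in Remark \ref{remark}, near each component $\gamma_i$ both forms look like $dx \wedge \frac{dy}{y}$ up to the freedom in the defining function and a closed term; the modular period along $\gamma_i$ is exactly the obstruction to matching the local models, so equality of periods lets me choose a tubular-neighborhood diffeomorphism sending $\omega_0$ to $\omega_1$ near $Z$. Third, away from $Z$, on the (finitely many) open symplectic pieces $M \setminus Z$, the forms $\omega_0$ and $\omega_1$ are genuine symplectic forms on an open surface; a Moser argument applies provided $\omega_0 - \omega_1$ is exact with a primitive vanishing near $Z$ — here is where the regularized Liouville volume enters, since on a surface two symplectic forms with the same total (regularized) area and agreeing near the boundary of each piece are symplectomorphic by Moser, the cohomological obstruction being precisely the area. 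One patches the local-near-$Z$ diffeomorphism with the Moser flow on the complement, using a cutoff, and checks the interpolating family $\omega_t = (1-t)\omega_0 + t\omega_1$ stays $b$-symplectic (automatic in dimension two once the leading terms agree) and has $t$-independent class in $\,^bH^2(M,\RR)$, which by the Mazzeo-Melrose decomposition $\,^bH^2(M) \cong H^2(M) \oplus H^1(Z)$ amounts exactly to fixing the regularized volume and the modular periods.

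The main obstacle I anticipate is the gluing step: making the local $b$-symplectomorphism near $Z$ and the Moser isotopy on $M \setminus Z$ agree on the overlap annuli without destroying the normal form at $Z$. Concretely, after the near-$Z$ step $\omega_0$ and $\omega_1$ agree on a neighborhood $\cU$ of $Z$, so $\omega_0 - \omega_1 = d\mu$ for a $b$-one-form $\mu$ that vanishes on $\cU$; one must verify $\mu$ can be chosen globally (an argument with the long exact sequence of the pair, or directly on each connected component of $M \setminus Z$ whose ``area defect'' is zero by the regularized volume hypothesis) and then integrate $\iota_{v_t}\omega_t = -\mu$ — the vector field $v_t$ is automatically a $b$-vector field and vanishes near $Z$, so its flow is complete on the compact $M$ and fixes a neighborhood of $Z$, giving the desired global $b$-symplectomorphism. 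Conversely, that these invariants are genuine invariants (diffeomorphism-invariance of the curve set, the modular periods, and the regularized volume) is straightforward and I would dispatch it first: the curve set is intrinsic as $Z$, the modular class is intrinsic by Weinstein's modular vector field being well-defined up to Hamiltonian vector fields, and the regularized volume is independent of the defining function by Remark \ref{integrate}.
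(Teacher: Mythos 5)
Your overall framework (dualize to $b$-symplectic forms, let the modular periods control the behaviour at $Z$ and the regularized volume serve as the cohomological datum, conclude by Moser) is the right one, but the gluing scheme you propose has a genuine gap. You first arrange $\omega_0=\omega_1$ on a whole neighborhood $\cU$ of $Z$ and then want a primitive $\mu$ of $\omega_0-\omega_1$ vanishing on $\cU$, so that the Moser field vanishes near $Z$ and the final diffeomorphism fixes a neighborhood of $Z$. Such a $\mu$ exists only if $\int_W(\omega_0-\omega_1)=0$ for \emph{every} connected component $W$ of $M\setminus Z$, and your justification of this (``the area defect is zero by the regularized volume hypothesis'') is where the argument breaks: equality of the regularized Liouville volumes is a single scalar condition and only forces the \emph{sum} of the component defects to vanish. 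Already on $\SSS^2$ with $Z$ the equator one can have, after the near-$Z$ matching, $\int_{\mathrm{upper}}(\omega_0-\omega_1)=c\neq 0$ and $\int_{\mathrm{lower}}(\omega_0-\omega_1)=-c$ (take $\omega_1=\omega_0+\rho$ with $\rho$ a small closed two-form supported away from $Z$ having these integrals); a Stokes argument then shows that no diffeomorphism which is the identity near $Z$ (and isotopic to the identity rel a neighborhood of $Z$) can pull $\omega_1$ back to $\omega_0$, even though the two structures are Radko-equivalent. Your relative-on-the-complement scheme thus tries to prove a stronger statement than is true: the finite area discrepancy has to be transported across the collar of $Z$, drawing on the infinite area concentrated there, and a flow fixing a neighborhood of $Z$ cannot do that.

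The paper's proof avoids this by demanding much less at $Z$: in dimension two one writes $\omega_j=\alpha_j\wedge\frac{df}{f}$, equality of the modular periods forces $\tilde\alpha_0=\tilde\alpha_1$ and hence only $\omega_0|_Z=\omega_1|_Z$ as sections of $\Lambda^2({}^bT^*M)|_Z$; then Proposition \ref{honest} together with equality of the regularized volumes gives $[\omega_0]=[\omega_1]$ in ${}^bH^2(M)$, one checks that the linear path $\omega_t$ stays $b$-symplectic, and one applies the global $b$-Moser theorem (Theorem \ref{globalmoser}). There the Moser vector field is merely a $b$-vector field tangent to $Z$, not zero near $Z$, so its flow preserves $Z$ and each component of $M\setminus Z$ while sliding the collar and redistributing the finite area defects between components --- exactly the flexibility your construction forbids. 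To salvage your outline you would either have to use the freedom in the near-$Z$ normalization to make the per-component defects vanish (which amounts to redoing the global step), or simply drop the requirement that the primitive vanish near $Z$ and run the global $b$-Moser argument as the paper does.
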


In the paper \cite{Radko}, the author then uses this classification to explicitly compute the Poisson cohomology of the manifold in terms of the modular vector field.

\begin{example}
So far our examples have been of dimension 2, but product structures allow us to get to higher dimensions: let $(R,\pi_R)$  be a Radko compact surface and $(S,\pi_S)$ be a compact symplectic surface, then $(R\times S,\pi_R+\pi_S)$ is a $b$-Poisson manifold of dimension 4. Furthermore, perturbations of product structures can be used to obtain non-product ones. For instance, take $\SSS^2$ with $b$-Poisson structure $\Pi_1=h\frac{\partial}{\partial h}\wedge\frac{\partial}{\partial \theta}$ and the symplectic torus $\mathbb T^2$ with dual Poisson structure $\Pi_2=\frac{\partial}{\partial\theta_1} \wedge\frac{\partial}{\partial \theta_2}$. Then
$$\hat\Pi=h \frac{\partial}{\partial h}\wedge(\frac{\partial}{\partial \theta}+\frac{\partial}{\partial\theta_1})+ \Pi_2.$$
is a $b$-Poisson structure on  $\SSS^2\times \mathbb T^2$, but not the product one described above.

We will see in Section \ref{section:normalforms} that using Moser path methods in the $b$-context, we can control perturbations that produce equivalent Poisson structures.
\end{example}

\begin{example}\label{basicexample}
Let $(N^{2n+1},\pi)$ be a regular corank-1 Poisson manifold, $X$ be a Poisson vector field and $f:\SSS^1\to\RR$ a smooth function. The bivector field
$$\Pi=f(\theta)\frac{\partial}{\partial\theta}\wedge X+\pi$$
is a $b$-Poisson structure on $\SSS^1\times N$ if the function $f$ vanishes linearly and the vector field $X$ is transverse to the symplectic leaves of $N$ (this condition is necessary to guarantee transversality). If that is so, the exceptional hypersurface consists of the union of as many copies of $N$ as zeros of $f$.

In this example $N^{2n+1}$ is the exceptional hypersurface of the $b$-Poisson manifold and has an induced Poisson structure which is regular of corank one. It is a general fact that the exceptional hypersurface of a $b$-Poisson manifold naturally inherits a corank-one Poisson structure.
This example provides the semilocal model for a $b$-Poisson structure in a neighborhood of the exceptional hypersurface $Z$.

\end{example}

\subsection{$b$-Poisson equals $b$-symplectic}\label{subsection:poisson b symplectic}

In this section, we will show the following:

\begin{proposition}\label{prop:poissonbsymplectic}
A two-form $\omega$ on a $b$-manifold $(M,Z)$ is $b$-symplectic if and only if its dual bivector field $\Pi$ is a $b$-Poisson structure.
\end{proposition}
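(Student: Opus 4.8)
The plan is to compute everything in a local model near a point $p\in Z$ and observe that the nondegeneracy condition on $\omega$ (maximal rank of $\omega_p$ in $\Lambda^2({}^bT_p^*M)$) matches, under the musical duality ${}^bT^*M \cong {}^bTM$ determined by a choice of defining function $f$, the transversality condition defining a $b$-Poisson structure. The key point is that the whole discussion takes place within the $b$-tangent/$b$-cotangent bundles, so duality is just the usual linear-algebra duality applied fiberwise to the bundle ${}^bTM$; there is nothing singular happening once we work with $b$-objects. Away from $Z$ the statement is the classical equivalence "symplectic $\iff$ nondegenerate Poisson with $d\omega=0 \Leftrightarrow [\Pi,\Pi]=0$", so all the content is at points of $Z$.

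First I would fix a defining function $f$ and use (\ref{eq:bsymplecticform}) to write $\omega = \alpha\wedge\frac{df}{f}+\beta$ with $\alpha\in\Omega^1(M)$, $\beta\in\Omega^2(M)$; dually, a section $\Pi$ of $\Lambda^2({}^bTM)$ can be written using the frame $f\frac{\partial}{\partial f}, \frac{\partial}{\partial x_2},\dots$ of ${}^bTM$. Then $\omega_p$ has maximal rank as an element of $\Lambda^2({}^bT_p^*M)$ exactly when $\omega_p^n\neq 0$ in $\Lambda^{2n}({}^bT_p^*M)$, which (again using (\ref{eq:bsymplecticform}) and Proposition \ref{thetamu}(c)) is equivalent to $\alpha\wedge\beta^{n-1}\wedge df$ being nowhere zero, i.e. to the smooth top-degree $b$-form $\omega^n$ trivializing $\Lambda^{2n}({}^bT^*M)$. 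Under duality this says precisely that the contravariant $b$-form $\Pi^n$, viewed as a section of $\Lambda^{2n}({}^bTM)$, is nowhere zero. So the first half of the argument is: $\omega$ $b$-symplectic $\iff$ $\Pi$, as a section of $\Lambda^2({}^bTM)$, satisfies $\Pi^n\neq 0$ everywhere and $[\Pi,\Pi]=0$ (closedness of $\omega$ translating into the Jacobi identity, exactly as in the non-degenerate symplectic case — this transcription is purely formal and valid on all of $M$).

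The remaining — and, I expect, the main — step is to connect "$\Pi$ is nowhere-vanishing as a section of $\Lambda^2({}^bTM)$" with Definition \ref{definition:firstb}, which speaks of the \emph{ordinary} bivector $\Pi\in\Gamma(\Lambda^2(TM))$ and its $n$-th power as a section of $\Lambda^{2n}(TM)$ being transverse to the zero section along $Z$. The bridge is the natural bundle map ${}^bTM\to TM$ (the anchor of the Lie algebroid ${}^bTM$), which is an isomorphism off $Z$ and which, in the local frame, sends $f\frac{\partial}{\partial f}\mapsto f\frac{\partial}{\partial f}$; hence a nowhere-vanishing section $\sigma$ of $\Lambda^{2n}({}^bTM)$ maps to $f\cdot(\text{unit section})$ of $\Lambda^{2n}(TM)$. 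I would then check that $f\cdot(\text{nonvanishing section})$ vanishes transversally along $Z=\{f=0\}$ precisely because $f$ is a defining function, i.e. $df\neq 0$ on $Z$; conversely if $\Pi^n$ (ordinary) vanishes transversally on a hypersurface $Z$, then locally $\Pi^n = g\cdot(\text{unit})$ with $g$ a defining function, so dividing by $g$ gives a nowhere-zero section of $\Lambda^{2n}({}^bTM)$, i.e. $\omega:=\Pi^{-1}$ (defined fiberwise in ${}^bTM$) is $b$-symplectic. The one technical nuisance to handle carefully is the translation between the covariant ($\omega$) and contravariant ($\Pi$) pictures of "$b$-ness": one must confirm that the inverse of a maximal-rank element of $\Lambda^2({}^bT_p^*M)$ is genuinely an element of $\Lambda^2({}^bT_pM)$ (not of $\Lambda^2(T_pM)$ with worse singularities), which is immediate from fiberwise linear algebra in the vector space ${}^bT_pM$, and that the dictionary $d\omega=0 \leftrightarrow [\Pi,\Pi]=0$ is unaffected by passing to $b$-bundles — both follow because ${}^bTM$ is an honest vector bundle with a Lie algebroid bracket restricting the Schouten bracket. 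Assembling these equivalences gives the proposition.
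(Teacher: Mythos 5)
Your first direction ($\omega$ $b$-symplectic $\Rightarrow$ $\Pi$ $b$-Poisson) is fine and close in spirit to the paper's brief remark: a maximal-rank section of $\Lambda^2({}^b TM)$ has nonvanishing top power in $\Lambda^{2n}({}^b TM)$, whose image under the anchor is $f\cdot(\text{nonvanishing section of }\Lambda^{2n}(TM))$ and hence transverse to the zero section, and $d\omega=0\Leftrightarrow[\Pi,\Pi]=0$ is the usual formal dictionary. The gap is in the converse direction. Starting from Definition \ref{definition:firstb} you only know that $\Pi$ is an ordinary Poisson bivector whose top power $\Pi^n\in\Gamma(\Lambda^{2n}(TM))$ vanishes transversally along $Z$; from the divisibility $\Pi^n=g\cdot(\text{nonvanishing})$ you pass directly to ``$\omega:=\Pi^{-1}$ defined fiberwise in ${}^b T_pM$''. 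But inverting in the $b$-fibers presupposes that $\Pi$ itself is a smooth section of $\Lambda^2({}^b TM)$, of maximal rank there, and this does not follow from the behaviour of the top exterior power alone. Concretely, on $\RR^4$ the bivector $\Pi=\frac{\partial}{\partial x_1}\wedge\frac{\partial}{\partial y_1}+x_1\frac{\partial}{\partial x_2}\wedge\frac{\partial}{\partial y_2}$ with $Z=\{x_1=0\}$ satisfies $\Pi^2=2\,x_1\,\frac{\partial}{\partial x_1}\wedge\frac{\partial}{\partial y_1}\wedge\frac{\partial}{\partial x_2}\wedge\frac{\partial}{\partial y_2}$, which vanishes transversally along $Z$, yet $\Pi(dx_1,dy_1)\equiv 1$, so the image of $\Pi^\sharp$ along $Z$ is not contained in $TZ$, $\Pi$ is not a section of $\Lambda^2({}^b TM)$, and the dual form $dx_1\wedge dy_1+\frac{1}{x_1}\,dx_2\wedge dy_2$ is not a $b$-form (its singular part is not aligned with $\frac{dx_1}{x_1}$). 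This $\Pi$ is of course not Poisson, which is exactly the point: your argument for this direction never invokes $[\Pi,\Pi]=0$, so it cannot be complete.

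What must be added is a proof that a $b$-Poisson bivector is tangent to $Z$, i.e.\ $\Pi^\sharp(T^*_pM)\subset T_pZ$ for $p\in Z$ (equivalently, in adapted coordinates with defining function $z_1$, the coefficients of $\frac{\partial}{\partial z_1}\wedge\frac{\partial}{\partial z_j}$ vanish on $Z$, so that $\Pi$ lifts smoothly to $\Lambda^2({}^b TM)$), together with maximality of the rank of this lift. This is where the paper spends its effort: it applies Weinstein's splitting theorem (Theorem \ref{thm:splitting}) at points of $Z$, uses transversality of $\Pi^n$ to force the rank there to be exactly $2n-2$ with a two-dimensional transverse Poisson structure vanishing linearly, and normalizes that factor by an explicit change of coordinates, yielding the local model of Proposition \ref{prop:localb}; $b$-symplecticity of the dual form is then immediate because it is a local condition. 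If you want to keep your anchor-map route and avoid the splitting theorem, you can close the gap directly: Hamiltonian flows preserve $\Pi$, hence preserve $\Pi^n$ and its zero set $Z$, so every Hamiltonian vector field is tangent to $Z$; since these span the image of $\Pi^\sharp$, you get the tangency, hence the smooth lift of $\Pi$ to $\Lambda^2({}^b TM)$, and comparing top powers through the anchor (as you already do) shows the lift is nondegenerate, after which the fiberwise inversion and the closedness/Jacobi dictionary go through as you describe.
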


{Note that because $\omega$ is of maximal rank in $\Lambda^2(^bT^*M)$ and $\Pi$ is of maximal rank in $\Lambda^2(^bT^*M)$, it makes sense to say that they are dual to each other. Similarly, this it makes sense to say that a volume form (of maximal rank in $\Lambda^{2n}(T^*M)$) has a dual $2n$-vector field (of maximal rank in $\Lambda^{2n}(TM)$).}

We begin by recalling Weinstein's splitting theorem, which we will then apply to the particular case of $b$-Poisson manifolds:

\begin{theorem}[Weinstein]\label{thm:splitting}
Let $(M^m,\Pi)$ be a Poisson manifold of rank $2k$ at a point $p\in M$. Then there exists a neighborhood and a local coordinate system $(x_1,y_1,\ldots, x_{k},y_{k}, z_1,\ldots, z_{m-2k})$ centered at $p$ for which the Poisson structure can be written as
\begin{equation}
\Pi = \sum_{i=1}^k \frac{\partial}{\partial x_i}\wedge
\frac{\partial}{\partial y_i} + \sum_{i,j=1}^{m-2k}
f_{ij}(z)\frac{\partial}{\partial z_i}\wedge
\frac{\partial}{\partial z_j} ,
\end{equation}
where $f_{ij}$ are functions which depend only on the variables $(z_1,\ldots,z_{m-2k})$ and which vanish at the origin.
\end{theorem}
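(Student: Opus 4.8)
This is Weinstein's classical splitting theorem, recorded here as a cited ingredient, so I would only reproduce the standard argument. The plan is an induction on $k$, half the rank of $\Pi$ at $p$, each step peeling off one symplectic pair $(x_1,y_1)$. If $k=0$ there is nothing to do: the displayed $x,y$-block is empty, $\Pi$ itself plays the role of the $z$-block, and its structure functions vanish at $p$ because $\Pi$ has rank zero there. Assume now $k\geq 1$.

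Since $\Pi$ has rank at least $2$ at $p$, there is a function $y_1$ defined near $p$ with $X_{y_1}(p)\neq 0$, where $X_h=\Pi(\,\cdot\,,dh)$ is the Hamiltonian vector field, so that $X_h(k)=\{k,h\}$. As $X_{y_1}$ is a nonvanishing vector field near $p$, the flow-box theorem lets me choose a function $x_1$ near $p$ with $X_{y_1}(x_1)=1$, i.e.\ $\{x_1,y_1\}\equiv 1$; in particular $X_{x_1}(p)$ and $X_{y_1}(p)$ are linearly independent. Because $f\mapsto X_f$ carries Poisson brackets to Lie brackets (up to sign), $[X_{x_1},X_{y_1}]=\pm X_{\{x_1,y_1\}}=\pm X_1=0$, so $X_{x_1}$ and $X_{y_1}$ are commuting, everywhere-independent vector fields near $p$. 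I would then invoke the simultaneous straightening theorem for commuting vector fields, with a twist: taking the codimension-two slice to be $S=\{x_1=y_1=0\}$ (a submanifold near $p$, since $dx_1\wedge dy_1\neq 0$ there, and transverse to both flows) and coordinatizing a neighborhood by the joint flow out of $S$, one gets coordinates $(x_1,y_1,z_1,\dots,z_{m-2})$ centered at $p$ in which $X_{x_1}=-\partial/\partial y_1$, $X_{y_1}=\partial/\partial x_1$, and $z_1,\dots,z_{m-2}$ restrict to coordinates on $S$.

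The remainder is formal. From the coordinate expressions of the Hamiltonian fields, $\{z_j,x_1\}=X_{x_1}(z_j)=-\partial z_j/\partial y_1=0$ and likewise $\{z_j,y_1\}=0$, while $\{x_1,y_1\}=1$ and $\{z_i,z_j\}=:f_{ij}$ are smooth. Since a Hamiltonian vector field is a derivation of the Poisson bracket, the Jacobi identity gives $X_{x_1}(f_{ij})=\{\{z_i,x_1\},z_j\}+\{z_i,\{z_j,x_1\}\}=0$ and similarly $X_{y_1}(f_{ij})=0$, so $f_{ij}=f_{ij}(z)$ depends only on the $z$-variables. Hence near $p$
\[
\Pi=\frac{\partial}{\partial x_1}\wedge\frac{\partial}{\partial y_1}+\sum_{i,j=1}^{m-2}f_{ij}(z)\,\frac{\partial}{\partial z_i}\wedge\frac{\partial}{\partial z_j},
\]
and $\pi':=\sum f_{ij}(z)\,\partial_{z_i}\wedge\partial_{z_j}$ is a Poisson structure on the slice, of rank $2k-2$ at the origin. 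Applying the inductive hypothesis to $\pi'$ and iterating $k$ times, the last residual block has rank $0$ at the origin, so its coefficients vanish there; assembling the $k$ splittings gives the stated normal form.

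The one step that is more than bookkeeping is the simultaneous straightening with $x_1,y_1$ serving as genuine coordinates, i.e.\ the need to know that $dx_1,dy_1$ are independent from $dz_1,\dots,dz_{m-2}$ at $p$. This is guaranteed by $dx_1(X_{y_1})=\{x_1,y_1\}=1$, $dy_1(X_{x_1})=\{y_1,x_1\}=-1$, and $dx_1(X_{x_1})=dy_1(X_{y_1})=0$, which show $dx_1$ and $dy_1$ are linearly independent already on the $2$-plane $\mathrm{span}(X_{x_1}(p),X_{y_1}(p))$; everything else reduces to the Leibniz rule, the Jacobi identity, and the flow-box theorem.
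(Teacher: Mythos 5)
Your argument is the standard (and correct) proof of Weinstein's splitting theorem: induction on the rank, producing a conjugate pair $\{x_1,y_1\}=1$, simultaneously straightening the commuting Hamiltonian fields, and using the Jacobi identity to see that the residual bracket functions depend only on the slice coordinates. The paper itself gives no proof of this statement --- it is quoted as a classical result of Weinstein \cite{weinstein1} --- and your write-up is essentially the argument from that source, so there is nothing further to compare.
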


Let $\Pi$ be a $b$-Poisson structure on $(M,Z)$, and $\Omega$ and $\Xi$ a volume form on $M$ and its dual $2n$-vector field, respectively. Then, $\Pi^n= f \Xi$ for some $f:M\rightarrow\RR$ which vanishes on $Z$. Since the $2n$-vector field $\Pi^n$ doesn't vanish identically, the generic rank of the Poisson structure is $2n$, and it is less than $2n$ on $Z$. This implies that the two-form $\omega_\Pi$ dual to $\Pi$ is a smooth symplectic form on $M\setminus Z$. Because $\Pi^n$ intersects the zero section of $\bigwedge^n(TM)$ transversally, $0$ is a regular value of $f$ and so $Z=f^{-1}(0)$ must be a codimension-one submanifold of $M$, a union of hypersurfaces. Furthermore, we can assume that in a neighborhood of a point in $Z$, the function $f$ is simply the coordinate function $z_1$, with $z_1=0$ locally defining the hypersurface. When restricted to $Z$, the Poisson structure defines a symplectic foliation of codimension one. To prove this, use Theorem \ref{thm:splitting} and observe that $\Pi^n$ vanishing transversally at $Z$ implies that the transverse Poisson manifolds at points of $Z$ must be of dimension two, so $Z$ is the union of symplectic leaves of corank 2 in $M$. This defines a codimension-one foliation of $Z$ by symplectic leaves.

Summarizing, a $b$-Poisson manifold is a Poisson manifold $(M^{2n},\Pi)$ for which the $2n$-vector field $\Pi^n$ vanishes linearly along a disjoint union of smooth hypersurfaces $Z$ and such that the Poisson structure $\Pi$ defines a symplectic structure $\omega_\Pi$ on $M\setminus Z$ and when restricted to $Z$ gives a symplectic foliation of codimension one in $Z$. In particular, the rank maximality of a $b$-symplectic form implies that its dual bivector field must be a $b$-Poisson structure; this proves one of the directions of Proposition \ref{prop:poissonbsymplectic}.

Consider now the particular case of a 2-dimensional $b$-Poisson manifold $(M,Z)$. The Poisson bivector field $\Pi$ vanishes linearly at $Z$, and the dual two-form will be given locally by $\omega_{\Pi}=\frac{1}{z_1 f(z_1,z_2)}\,dz_1\wedge dz_2$, with $f(z_1,z_2)$ nonvanishing on $Z$, which is given locally by $z_1=0$. The diffeomorphism $\phi$ given by the change of coordinates $z=z_1$ and $t=\int f(z_1,z_2)dz_2$ satisfies $\phi^*(\omega_{\Pi})=\frac{1}{z}\,dz\wedge dt$ (here we give the explicit diffeomorphism, but the existence of such a diffeomorphism derives simply from the fact that $\Pi^n$ intersects the zero section
$\bigwedge^n(TM)$ transversally and uses the regular value theorem).

Combining this result with Theorem \ref{thm:splitting}, we obtain a local normal form result resembling the Darboux theorem, which will appear again as Theorem \ref{theorem:Darboux2} and that we will reprove then using Moser path methods.

\begin{proposition}\label{prop:localb}
Let $(M,Z)$ be a $b$-Poisson manifold, with Poisson bivector field $\Pi$ and dual two-form $\omega_\Pi$. Then, on a neighborhood of a point $p\in Z$, there exist coordinates $(x_1,y_1,\dots x_{n-1},y_{n-1}, z, t)$  centered at $p$ such that
$$\omega_{\Pi}=\sum_{i=1}^{n-1} dx_i\wedge dy_i+\frac{1}{z}\,dz\wedge dt.$$
\end{proposition}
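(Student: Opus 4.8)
The plan is to reduce the statement to the two-dimensional computation carried out immediately above, by means of Weinstein's splitting theorem.

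First I would compute the rank of $\Pi$ at $p$. By the discussion preceding this Proposition, $\Pi^n$ vanishing transversally along $Z$ forces the transverse Poisson manifold at a point of $Z$ to be two-dimensional; equivalently $\mathrm{rank}\,\Pi(p) = 2(n-1)$. Applying Theorem \ref{thm:splitting} with $2k = 2(n-1)$ then yields coordinates $(x_1,y_1,\dots,x_{n-1},y_{n-1},z_1,z_2)$ centered at $p$ in which
$$\Pi \;=\; \sum_{i=1}^{n-1}\frac{\partial}{\partial x_i}\wedge\frac{\partial}{\partial y_i} \;+\; g(z_1,z_2)\,\frac{\partial}{\partial z_1}\wedge\frac{\partial}{\partial z_2},$$
with $g$ a function of $(z_1,z_2)$ alone vanishing at the origin. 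Thus, near $p$, $(M,\Pi)$ is isomorphic as a Poisson manifold to the product of standard symplectic $\RR^{2(n-1)}$ with the two-dimensional Poisson manifold $(\RR^2, g\,\partial_{z_1}\wedge\partial_{z_2})$.

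Next I would analyse the transverse factor. A direct computation gives $\Pi^n = c\, g(z_1,z_2)\,\partial_{x_1}\wedge\partial_{y_1}\wedge\cdots\wedge\partial_{x_{n-1}}\wedge\partial_{y_{n-1}}\wedge\partial_{z_1}\wedge\partial_{z_2}$ for a nonzero constant $c$, so that locally $Z = \{g = 0\}$ and the transversality of $\Pi^n$ to the zero section forces $dg \neq 0$ at the origin. Hence $(\RR^2, g\,\partial_{z_1}\wedge\partial_{z_2})$ is itself a two-dimensional $b$-Poisson manifold. Straightening the curve $\{g=0\}$ to $\{z_1 = 0\}$ (a change of the $(z_1,z_2)$-variables only, which merely multiplies $g$ by a nonvanishing Jacobian) and using Hadamard's lemma, we may write $g = z_1 f(z_1,z_2)$ with $f$ nonvanishing near the origin; the dual two-form of the transverse factor is then $\frac{1}{z_1 f(z_1,z_2)}\,dz_1\wedge dz_2$. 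This is precisely the two-dimensional form discussed above, and the coordinate change constructed there (setting $z = z_1$ and taking $t$ to be a suitable $z_2$-primitive, whose existence follows from the regular value theorem) brings it to $\frac{1}{z}\,dz\wedge dt$, still centered at $p$.

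Finally, since the splitting above exhibits $(M,\Pi)$ locally as a product of Poisson manifolds, its dual two-form $\omega_\Pi$ is the sum of the dual two-forms of the two factors; in the coordinates $(x_1,y_1,\dots,x_{n-1},y_{n-1},z,t)$ this gives
$$\omega_\Pi \;=\; \sum_{i=1}^{n-1} dx_i\wedge dy_i \;+\; \frac{1}{z}\,dz\wedge dt,$$
which is the assertion. The one step requiring care is this last passage: one must check that Weinstein's splitting of $\Pi$ genuinely produces a local product of Poisson manifolds, so that dualization — which is block-diagonal with respect to a maximal-rank block decomposition of $\Lambda^2({}^bT^*M)$ — respects the decomposition, and that the $b$-Poisson hypothesis on $M$ correctly localizes to the two-dimensional transverse factor. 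Everything else is the routine two-dimensional computation already recorded above.
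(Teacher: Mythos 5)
Your proposal is correct and follows essentially the same route as the paper: the paper also obtains Proposition \ref{prop:localb} by combining Weinstein's splitting theorem (with the transversality of $\Pi^n$ forcing the transverse factor to be two-dimensional) with the explicit two-dimensional change of coordinates $z=z_1$, $t=\int f(z_1,z_2)\,dz_2$. Your added checks (the rank computation at $p$, $dg\neq 0$, the Hadamard factorization, and that dualization respects the product splitting) are exactly the details the paper leaves implicit.
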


\begin{remark}\label{remark:darbouxpoisson}
In other words, we can find a splitting such that
$$\omega_{\Pi}=\omega_L+(\Pi^T)^\sharp$$
where $\omega_L$ is the symplectic form on the symplectic leaf through the point $p\in Z$ and $(\Pi^T)^\sharp$ is the dual to a $b$-Poisson structure on a 2-dimensional manifold. In particular, we have obtained a linearization result for bivector fields associated to $b$-manifolds.
\end{remark}

Because being symplectic is a local property, Proposition \ref{prop:localb} implies that a $b$-Poisson manifold is $b$-symplectic, the other direction of Proposition \ref{prop:poissonbsymplectic}. From now on, we will refer to these manifolds as $b$-symplectic manifolds.

\subsection{Modular vector fields of $b$-symplectic manifolds}\label{subsection:modularvectorfield}

A modular vector field on a Poisson manifold measures how far Hamiltonian vector fields are from preserving a given volume form. A simple example is that of a symplectic manifold endowed with the volume form that is the top power of its symplectic form: the modular vector field will be zero because this volume form is invariant under the flow of any Hamiltonian vector field. In this section we study modular vector fields of $b$-symplectic manifolds.

We follow Weinstein \cite{Weinstein2} for the description of modular vector fields of Poisson manifold; a complete presentation of these can also be found in \cite{kosman}. Some results about modular vector fields for regular corank one Poisson manifolds, as is the case of the exceptional hypersurface $Z$ of a $b$-symplectic manifold $(M,Z)$, can be found in \cite{guimipi}.

\begin{definition}
Let $(M,\Pi)$ be a Poisson manifold and $\Omega$ a volume form on it, and denote by $u_f$ the Hamiltonian vector field associated to a smooth function $f$ on $M$. The \textbf{modular vector field} $X_{\Pi}^{\Omega}$ (or simply $X^{\Omega}$ if the Poisson structure is fixed and hence implicit) is the derivation given by the mapping
$$f\mapsto \frac{\cL_{u_f}\Omega}{\Omega}.$$
\end{definition}

Let $(M,Z)$ be a $b$-symplectic manifold and consider the local coordinates given by Proposition \ref{prop:localb} in a neighborhood of a point $p\in Z$. The $b$-symplectic form $\omega$ can be written as
$$\omega=\sum_{i=1}^{n-1} dx_i\wedge dy_i+\frac{1}{z}\,dz\wedge dt,$$
and consider also the volume form
$$\Omega=dx_1\wedge dy_1\wedge\dots\wedge  dx_{n-1}\wedge dy_{n-1}\wedge dz\wedge dt.$$

Working in these local coordinates we see that the modular vector field associated to the volume form $\Omega$ and the Poisson structure dual to the $b$-symplectic form is given by
$$X^{\Omega}=\frac{\partial}{\partial t}.$$

As a consequence we have:

\begin{proposition}
The modular vector field of a $b$-symplectic manifold $(M,Z)$ is tangent to $Z$ and transverse to the symplectic leaves inside $Z$, independently of the volume form considered on $M$.
\end{proposition}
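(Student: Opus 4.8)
The plan is to deduce the statement directly from the local computation that immediately precedes it, together with Proposition \ref{prop:localb} and the well-known fact that the modular vector field changes by a Hamiltonian vector field when one changes the volume form. First I would recall that if $\Omega'=g\Omega$ for a nowhere-vanishing smooth function $g$, then $X^{\Omega'}=X^{\Omega}+u_{\log|g|}$ (or, more carefully, $X^{\Omega'}-X^{\Omega}$ is the Hamiltonian vector field of $-\log|g|$ up to sign conventions); this is standard from Weinstein \cite{Weinstein2}. Thus the two properties we must verify — tangency to $Z$ and transversality to the symplectic leaves of $Z$ — need only be checked for one convenient choice of volume form, since they are preserved under adding a Hamiltonian vector field: a Hamiltonian vector field $u_h$ of the $b$-Poisson structure is automatically tangent to $Z$ (because $Z$ is a union of symplectic leaves, hence Poisson-invariant), and $u_h|_Z$ is tangent to the symplectic leaves of $Z$, so adding it does not change either property.

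Next I would invoke Proposition \ref{prop:localb}: near any $p\in Z$ there are coordinates $(x_1,y_1,\dots,x_{n-1},y_{n-1},z,t)$ with $\omega_\Pi=\sum dx_i\wedge dy_i+\frac{1}{z}dz\wedge dt$ and $Z=\{z=0\}$, and the local computation already carried out in the excerpt shows that for the flat volume form $\Omega=dx_1\wedge\cdots\wedge dt$ one has $X^{\Omega}=\frac{\partial}{\partial t}$. From this explicit formula both claims are immediate: $\frac{\partial}{\partial t}$ is tangent to $\{z=0\}=Z$, and since the symplectic leaves of the induced corank-one Poisson structure on $Z$ are (as recorded in Remark \ref{remark}) the level sets of $t$ — equivalently the integral manifolds of $\ker\tilde\alpha$ with $\tilde\alpha=dx_1$ and symplectic form $\sum_{i\ge 2}dx_i\wedge dy_i$ — the vector field $\frac{\partial}{\partial t}$ is transverse to each of them. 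Here I should be slightly careful to match conventions: in Proposition \ref{prop:localb} the leaves are cut out by $z=\text{const}$ and one further coordinate, so I would restate the leaf structure inside $Z$ consistently with the coordinates of that proposition before reading off transversality.

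Then I would assemble the global statement: given an arbitrary volume form $\Omega_1$ on $M$, write $\Omega_1=g\,\Omega_0$ where $\Omega_0$ is any fixed volume form that restricts to the flat model $\Omega$ in the chart above (one can take $\Omega_0=\frac{1}{?}\omega_\Pi^{\,n}\cdot(\text{correction})$ — or more simply just note the comparison is local and $g$ is a global nowhere-vanishing function). Since $X^{\Omega_1}=X^{\Omega_0}+u_{\pm\log|g|}$ and $X^{\Omega_0}$ has the two desired properties in the chart around any point of $Z$, while $u_{\pm\log|g|}$ is Hamiltonian and hence tangent to $Z$ and to the leaves of $Z$, the vector field $X^{\Omega_1}$ is tangent to $Z$ everywhere and transverse to the symplectic foliation of $Z$ everywhere. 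The conclusion being purely local around $Z$, covering $Z$ by such Darboux charts finishes the argument.

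The main obstacle, such as it is, is bookkeeping rather than mathematics: one must make sure the ``change of volume form'' formula is stated with the correct sign and correct target (the modular vector field is only defined up to the choice of $\Omega$, and the difference is a genuine Hamiltonian vector field of the $b$-Poisson structure, which is a $b$-vector field — tangent to $Z$ — precisely because $\log|g|$ is smooth and $\Pi$ is $b$-Poisson). I would also want to double-check that ``transverse to the symplectic leaves inside $Z$'' is literally visible from $X^{\Omega_0}=\partial/\partial t$, i.e.\ that $t$ (together with $z$) is a transverse coordinate to the leaves; this is exactly what Remark \ref{remark} and Proposition \ref{prop:localb} give, so no new work is needed beyond citing them.
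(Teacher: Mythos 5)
Your proposal is correct and follows essentially the same route as the paper: compute the modular vector field in the local coordinates of Proposition \ref{prop:localb} for the flat volume form, obtaining $\frac{\partial}{\partial t}$, which is visibly tangent to $Z=\{z=0\}$ and transverse to the leaves (level sets of $t$), and then observe that any other volume form changes the modular vector field only by a Hamiltonian vector field, which is tangent to the symplectic leaves of $Z$ and hence to $Z$, so both properties persist. Your extra care about conventions and the local-versus-global comparison of volume forms is fine but adds nothing beyond the paper's argument.
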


\begin{proof}
At each point $p\in Z$, working in the local coordinates mentioned above, the modular vector field with respect to the volume form $\Omega$ above is
$$X^{\Omega}=\frac{\partial}{\partial t}.$$
This vector field is tangent to $Z$, which is given locally by $z=0$, and transverse to the symplectic foliation inside $Z$, because the leaves of that foliation are locally just the different levels of the coordinate function $t$.

If we consider another volume form $\Omega'=H \Omega$, where $H\in C^\infty(M)$ is nonvaninshing, the modular vector field becomes
$$X^{\Omega'}=\frac{\partial}{\partial t}+ u_{log(\left|H\right|)},$$
it differs from the previous one by a hamiltonian vector field\footnote{The \textbf{modular class} of a Poisson manifold is the class of a modular vector field in the first Poisson cohomology group, it depends on the Poisson structure but not on the volume form: a change in the volume form changes the modular vector field by a hamiltonian vector field.}. Hamiltonian vector fields are tangent to the symplectic leaves of $M$, and in particular, to all the $(2n-2)$-dimensional leaves whose union is $Z$ and hence to $Z$ itself. Therefore, the new modular vector field $X^{\Omega'}$ will still be tangent to $Z$ and transverse to the symplectic leaves in it.
\end{proof}

\begin{remark}\label{rem:remark}
Using these local coordinates we also see that $\tilde\alpha(v_\text{mod}|_Z)=1$, independently of choice of modular vector field $v_\text{mod}$.
\end{remark}

\section{Cohomology theories for $b$-manifolds}\label{section:cohomology}

In this section we explore some cohomology theories for $b$-manifolds, and the relationships between them. For a $b$-manifold $(M,Z)$, we can talk about the usual cohomology theories for the underlying manifold $M$, such as de Rham cohomology and Poisson cohomology, which correspond respectively to de Rham forms and to multivector fields, but we can also use the notions of $b$-forms and $b$-multivector fields, and study the corresponding cohomology theories.

\subsection{De Rham cohomology and $b$-cohomology}

We begin by proving a Mazzeo-Melrose theorem for $b$-manifolds (see \cite[\S 2.16]{Melrose} for the original version) and then as a direct application obtain some results about low degree $b$-cohomology for $b$-symplectic manifolds.

Let $(M,Z)$ be a $b$-manifold with  $Z\stackrel{i}\hookrightarrow M$ compact.

\begin{theorem}\label{thm:mazzeomelrose}\textbf{[$b$-Mazzeo-Melrose theorem]}
The $b$-cohomology groups of $M$ are computable by
$$^b H^*(M)\cong H^*(M)\oplus H^{*-1}(Z).$$
\end{theorem}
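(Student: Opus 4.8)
The plan is to exploit the decomposition of $b$-forms from~\eqref{eq:bDeRham}. Having fixed a defining function $f$ for $Z$ and a tubular neighborhood $\mathcal U\cong Z\times(-1,1)$ on which $f$ is the second coordinate, every $b$-de Rham $k$-form can be written as $\omega = \alpha\wedge\frac{df}{f}+\beta$ with $\alpha\in\Omega^{k-1}(M)$ and $\beta\in\Omega^k(M)$. The key observation is that this gives, at the level of complexes, a short exact sequence
\begin{equation*}
0\longrightarrow \Omega^*(M)\longrightarrow {}^b\Omega^*(M)\stackrel{r}{\longrightarrow} \Omega^{*-1}(Z)\longrightarrow 0,
\end{equation*}
where the first map is the inclusion described in Section~\ref{bDeRham} and $r(\omega)=i^*\alpha$, the ``residue'' of $\omega$ along $Z$. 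First I would check that $r$ is well defined (by Proposition~\ref{prop:unique}, $\alpha_p$ is intrinsic at $p\in Z$, so $i^*\alpha$ depends only on $\omega$ and $f$) and that it is a chain map: from $d\omega = d\alpha\wedge\frac{df}{f}+d\beta$ we get $r(d\omega)=i^*(d\alpha)=d(i^*\alpha)=d\,r(\omega)$. Surjectivity of $r$ is seen by taking, for a closed form representing a class on $Z$, its pullback under $\pi:\mathcal U\to Z$ wedged with $\frac{df}{f}$ and cutting off away from $Z$; exactness in the middle is precisely Proposition~\ref{honest} (a $b$-form with $r(\omega)=0$, i.e. $\alpha|_Z=0$, together with $\beta$ arbitrary, is — after absorbing $\frac{\alpha}{f}\wedge df$ — an honest de Rham form).

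Next I would pass to the associated long exact sequence in cohomology:
\begin{equation*}
\cdots\longrightarrow H^k(M)\longrightarrow {}^bH^k(M)\stackrel{r_*}{\longrightarrow} H^{k-1}(Z)\stackrel{\delta}{\longrightarrow} H^{k+1}(M)\longrightarrow\cdots.
\end{equation*}
The theorem follows once I show the connecting homomorphism $\delta$ vanishes, for then the sequence breaks into split short exact sequences $0\to H^k(M)\to {}^bH^k(M)\to H^{k-1}(Z)\to 0$. To compute $\delta$: given a closed form $\gamma\in\Omega^{k-1}(Z)$, lift it to the $b$-form $\widetilde\omega = (\pi^*\gamma)\wedge\frac{df}{f}$ on $\mathcal U$, extended by a cutoff $\rho$ supported near $Z$ (so $r(\rho\,\pi^*\gamma\wedge\frac{df}{f})=\gamma$); then $\delta[\gamma]$ is represented by $d(\rho\,\pi^*\gamma\wedge\frac{df}{f}) = d\rho\wedge\pi^*\gamma\wedge\frac{df}{f}$, which — because $d\rho$ is supported in the region $f\ne 0$ where $\frac{df}{f}$ is an honest form — is an honest de Rham $(k+1)$-form on $M$. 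The cleanest way to finish is to produce a genuine splitting map rather than argue $\delta=0$ abstractly: define $s:H^{k-1}(Z)\to {}^bH^k(M)$ by sending a class represented by the closed form $\gamma$ to the class of the \emph{closed} $b$-form $\rho\,\pi^*\gamma\wedge\frac{df}{f}+ \eta$, where $\eta$ is chosen to correct $d\rho\wedge\pi^*\gamma\wedge\frac{df}{f}$ to zero — equivalently, observe that on $\mathcal U$ the form $\pi^*\gamma\wedge\frac{df}{f}$ is already closed, and that its extension can be taken closed because $d\rho\wedge\pi^*\gamma\wedge\frac{df}{f}$ is exact in $\Omega^*(M)$ (it is supported in a collar and is the differential of $\rho\,\pi^*\gamma\wedge\frac{df}{f}$ read as a de Rham form there). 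Checking $r_*\circ s = \mathrm{id}$ and independence of choices then gives the direct sum decomposition.

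The main obstacle is the construction and verification of the splitting — i.e. showing that the connecting map $\delta$ is zero, or equivalently that a cohomology class on $Z$ can be realized by a \emph{closed} global $b$-form whose residue is the given class. This is where the geometry enters (the cutoff $\rho$, the collar structure, the fact that $\frac{df}{f}$ is closed and that $d\rho$ lands in the region where it is a smooth form); everything else — well-definedness of $r$, the chain map property, exactness of the short sequence — is formal bookkeeping built on Propositions~\ref{prop:unique} and~\ref{honest}. One should also remark that the isomorphism depends a priori on the choice of $f$ and collar only through the residue map's splitting, but the groups themselves are intrinsic; and compactness of $Z$ is used to ensure the collar exists and the cutoff can be chosen with the stated support properties (and, if one wants, to invoke finite-dimensionality). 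Finally I would note, as the authors surely do, that this recovers Mazzeo--Melrose for manifolds with boundary upon the usual translation $\partial M\leftrightarrow Z$.
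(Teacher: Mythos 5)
Your proposal is essentially the paper's own proof: the same short exact sequence $0\to\Omega^*(M)\to{}^b\Omega^*(M)\to\Omega^{*-1}(Z)\to 0$ built on Propositions \ref{prop:unique} and \ref{honest}, the induced long exact sequence, and a splitting obtained by exhibiting, for each closed $\gamma$ on $Z$, a closed global $b$-form with residue $\gamma$ of the shape $\pi^*\gamma\wedge\frac{df}{f}$ supported in a collar (the paper implements the cutoff by modifying $f$ to be $\equiv 1$ outside a smaller collar; you use a cutoff function $\rho$).

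One small caveat: your justification that the error term $d\rho\wedge\pi^*\gamma\wedge\frac{df}{f}$ is exact in $\Omega^*(M)$ --- ``it is the differential of $\rho\,\pi^*\gamma\wedge\frac{df}{f}$ read as a de Rham form there'' --- is not valid as written, since that putative primitive is singular along $Z$ and hence is not a global de Rham form, and exactness on a neighbourhood of the support does not give exactness on $M$. The claim is nonetheless true (compare $\rho$ with a cutoff $\rho_0$ depending only on the collar coordinate: $(\rho-\rho_0)\pi^*\gamma\wedge\frac{df}{f}$ is a smooth global primitive), but the cleanest repair is simply to take $\rho$ to be a function of the collar coordinate from the start, so $d\rho\wedge\frac{df}{f}=0$ and the extension is closed on the nose --- which is exactly what the paper's modification of $f$ achieves.
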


\begin{proof}
Let $f:(M,Z)\rightarrow(\RR,0)$ be a defining function for $Z$. Then, every $\omega\in \,^b\Omega^k(M)$ can be written as $\omega=\alpha\wedge\frac{df}{f}+\beta$, with $\alpha,\beta\in\Omega^*(M)$. Moreover, in $b$-de Rham theory, $\tilde{\alpha}:=i^*\alpha$, where $i:Z\hookrightarrow M$ is the inclusion, is intrinsically defined independent of the choice of $f$, so we have a canonical short exact sequence of de Rham complexes
$$0\rightarrow\Omega^k(M)\rightarrow \,^b\Omega^k(M)\rightarrow\Omega^{k-1}(Z)\rightarrow0$$
which induces a long exact sequence in cohomology
$$\ldots \rightarrow H^k(M)\stackrel{i}{\rightarrow} \,^b H^k(M)\stackrel{j}{\rightarrow}H^{k-1}(Z)\stackrel{\delta}{\rightarrow}H^{k-1}(M)\rightarrow\ldots$$

This sequence splits into short exact sequences because the map $j$ is surjective: let $\cU\cong Z\times(-\varepsilon,\varepsilon)$ be a collar neighborhood of $Z$ in $M$, change $f$ so that $f\equiv1$ on the complement of $Z\times(-\frac{\varepsilon}{2},\frac{\varepsilon}{2})$, and let $p:Z\times(-\frac{\varepsilon}{2},\frac{\varepsilon}{2})\rightarrow Z$ be the projection. Then, for any closed $\tilde{\alpha}\in\Omega^{k-1}(Z)$, the form $\omega=p^*\tilde{\alpha}\wedge\frac{df}{f}\in \,^b\Omega^k(M)$, and $j[\omega]=[\tilde{\alpha}]$.
\end{proof}

Observe that if $M$ is compact then the theorem above says that for cohomology of top dimension we have
$$^bH^d(M)=H^d(M)\oplus(\oplus_i H^{d-1}(Z_i))$$
where the $Z_1,Z_2,\ldots,Z_r$ are the connected components of $Z$, and hence $\dim(\, ^bH^d(M))=r+1$.

The symplectic form on a compact symplectic manifold defines a non-vanishing second cohomology class on the manifold, yielding a non-trivial second cohomology group. For $b$-symplectic manifolds, the analogue involves the second $b$-cohomology group:

\begin{proposition}
For a compact $b$-symplectic manifold $(M,Z)$ we have $H^1(Z)\neq\{0\}$ and consequently $^b H^2(M)\neq\{0\}$.
\end{proposition}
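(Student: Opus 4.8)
The plan is to exhibit an explicit nonzero class in $H^1(Z)$, using the $1$-form $\tilde\alpha$ (equivalently the $1$-form on $Z$ dual to the modular vector field) produced earlier, and then transport nonvanishing to $^bH^2(M)$ via the $b$-Mazzeo--Melrose splitting. So first I would recall that, by Proposition \ref{thetamu}, a choice of defining function $f$ decomposes the $b$-symplectic form as $\omega=\alpha\wedge\frac{df}{f}+\beta$ with $\tilde\alpha=i^*\alpha\in\Omega^1(Z)$ a closed, nowhere-vanishing $1$-form whose class $[\tilde\alpha]\in H^1(Z)$ is intrinsically defined (independent of the splitting and of $f$, up to the adjustments described in the proof of Proposition \ref{thetamu}(a)). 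The key point is that $[\tilde\alpha]\neq 0$ in $H^1(Z)$: if $\tilde\alpha$ were exact, say $\tilde\alpha=dg$, then $g$ would be a function on $Z$ with nowhere-vanishing differential, hence a submersion $Z\to\RR$; but $Z$ is compact (as $M$ is compact), and a compact manifold admits no submersion to $\RR$ (the image would be a compact open interval, impossible). Hence $H^1(Z)\neq\{0\}$.

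Then, for the second assertion, I would invoke Theorem \ref{thm:mazzeomelrose}, which gives $^bH^2(M)\cong H^2(M)\oplus H^1(Z)$; since $H^1(Z)\neq\{0\}$ we immediately get $^bH^2(M)\neq\{0\}$. Concretely one can even name the class: the closed $b$-form $\omega$ itself (or rather the summand $p^*\tilde\alpha\wedge\frac{df}{f}$ after adjusting $f$ on a collar as in the proof of Theorem \ref{thm:mazzeomelrose}) maps under $j\colon {}^bH^2(M)\to H^1(Z)$ to $[\tilde\alpha]\neq 0$, so $[\omega]\neq 0$ in $^bH^2(M)$. This also gives the stronger and more geometric statement that the $b$-symplectic form is never $b$-exact on a compact $b$-symplectic manifold, mirroring the familiar symplectic fact.

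The only real content is the nonvanishing of $[\tilde\alpha]$, and the main obstacle is making sure the argument that $\tilde\alpha$ is not exact is airtight on each connected component of $Z$: $\tilde\alpha$ restricted to a component $Z_i$ is still closed and nowhere vanishing, and the compactness/submersion obstruction applies componentwise, so $H^1(Z_i)\neq\{0\}$ for every $i$ and a fortiori $H^1(Z)=\bigoplus_i H^1(Z_i)\neq\{0\}$. One should also note that one does not even need $Z$ orientable for this step — only compactness — though orientability is part of the standing definition of a $b$-manifold anyway. Everything else (the splitting, the decomposition of $\omega$, the intrinsic nature of $\tilde\alpha$) has already been established in the excerpt and can simply be cited.
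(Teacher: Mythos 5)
Your proposal is correct and follows essentially the same route as the paper: both arguments exhibit the closed, nowhere-vanishing one-form $\tilde\alpha$ on the compact hypersurface $Z$ and rule out exactness (the paper via a max/min of a potential $g$, you via the equivalent no-submersion-to-$\RR$ observation), then conclude $^bH^2(M)\neq\{0\}$ from Theorem \ref{thm:mazzeomelrose}. Your extra remarks (componentwise argument, identifying $j[\omega]=[\tilde\alpha]$ so that $[\omega]\neq 0$) are correct refinements but do not change the substance.
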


\begin{proof}
The submanifold $Z$, being the vanishing set of $\Pi^n$ (the top power of the bivector field dual to the $b$-symplectic form), is closed and since $M$ is compact, $Z$ is compact as well. Let $\alpha$ be a one-form $Z$ that defines the corank-1 regular foliation induced by $\Pi$ on $Z$: the form $\alpha$ is nowhere vanishing and $i^*_L\alpha=0$ for all leaves $L\stackrel{i_L}\hookrightarrow Z$. If we had $H^1(Z)=0$ then $\alpha$ would be exact: $\alpha= dg$ for some function $g\in C^\infty(Z)$. By compactness of $Z$, the function $g$ has maximum and minimum points, at which $\alpha=dg$ would then necessarily vanish.

Then, by Theorem \ref{thm:mazzeomelrose} we have
$$^b H^2(M)\cong H^2(M)\oplus H^{1}(Z)\neq \{0\}.$$
\end{proof}

\begin{proposition}
For a compact $b$-symplectic manifold $(M,Z)$ we have $H^2(Z)\neq \{0\}$ and consequently $^b H^3(M)\neq\{0\}$.
\end{proposition}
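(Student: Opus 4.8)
The plan is to mimic the structure of the preceding proposition that $H^1(Z) \neq \{0\}$, but now exhibit a non-vanishing class in $H^2(Z)$ coming from the leafwise symplectic form $\tilde\beta$. Recall from Proposition~\ref{thetamu} that the restriction $i^*\tilde\alpha$ defines a codimension-one foliation $\mathcal{F}$ of the compact manifold $Z$, and on each leaf $L \stackrel{i_L}\hookrightarrow Z$ the form $i_L^*\tilde\beta$ is a symplectic form. The key observation is that $\tilde\beta$ is closed on $Z$ (Proposition~\ref{thetamu}), so it defines a class $[\tilde\beta] \in H^2(Z)$, and I want to argue this class is non-zero. First I would dispose of the easy case: if $\dim Z = 2$, then each leaf is two-dimensional, $\tilde\beta$ is itself a leafwise area form, and $Z$ being compact and oriented forces $H^2(Z) \neq \{0\}$ directly. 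So assume $\dim Z = 2n-1 \geq 3$.

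Next I would suppose for contradiction that $H^2(Z) = \{0\}$, so that $\tilde\beta = d\sigma$ for some $\sigma \in \Omega^1(Z)$. Restrict to a leaf $L$ of $\mathcal{F}$: then $i_L^*\tilde\beta = d(i_L^*\sigma)$, so the leafwise symplectic form is exact on $L$. If $L$ happens to be compact this is already a contradiction, since a compact symplectic manifold has $[\omega^k] \neq 0$ in top degree. The difficulty is that the leaves need not be compact, so I need a different mechanism. The cleaner route is to combine the exactness of $\tilde\beta$ with the fact (Proposition~\ref{thetamu}(c) and Remark~\ref{remark}) that $\tilde\alpha \wedge \tilde\beta^{n-1}$ is a nowhere-vanishing form on $Z$ — i.e.\ a volume form on the $(2n-1)$-manifold $Z$. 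If $\tilde\beta = d\sigma$ then
$$\tilde\alpha \wedge \tilde\beta^{n-1} = \tilde\alpha \wedge d\sigma \wedge \tilde\beta^{n-2} = d\bigl(\tilde\alpha \wedge \sigma \wedge \tilde\beta^{n-2}\bigr) \pm d\tilde\alpha \wedge \sigma \wedge \tilde\beta^{n-2},$$
and since $\tilde\alpha$ is closed (Proposition~\ref{thetamu}), the second term vanishes, so the volume form $\tilde\alpha\wedge\tilde\beta^{n-1}$ is exact on the compact manifold $Z$. By Stokes' theorem $\int_Z \tilde\alpha \wedge \tilde\beta^{n-1} = 0$, contradicting that it is a volume form (hence has nonzero integral, $Z$ being compact and oriented — orientability of $Z$ holds since $Z$ is co-orientable in the oriented $M$). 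This yields $H^2(Z) \neq \{0\}$, and then Theorem~\ref{thm:mazzeomelrose} gives $^bH^3(M) \cong H^3(M) \oplus H^2(Z) \neq \{0\}$.

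The main obstacle, and the point to be careful about, is exactly the non-compactness of the symplectic leaves: a naive "symplectic volume on a leaf" argument does not close, which is why I route the contradiction through the global volume form $\tilde\alpha \wedge \tilde\beta^{n-1}$ on $Z$ itself rather than through any single leaf. The one technical check needed is that $\tilde\alpha$ is genuinely closed as a form on $Z$ (so the cross-term drops) and that $\tilde\alpha\wedge\tilde\beta^{n-1}$ is nowhere zero — both are already established in Proposition~\ref{thetamu} and Remark~\ref{remark}. Everything else is Stokes' theorem plus the observation that a nowhere-vanishing top form on a closed oriented manifold cannot be exact.
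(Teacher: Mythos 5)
Your proposal is correct and takes essentially the same route as the paper: assume $\tilde\beta=d\sigma$, use closedness of $\tilde\alpha$ to conclude that the volume form $\tilde\alpha\wedge\tilde\beta^{n-1}$ on $Z$ is exact, and contradict Stokes' theorem on the closed manifold $Z$, then invoke the Mazzeo--Melrose splitting. The only differences are cosmetic: the paper phrases the Stokes step as an integral over $\partial Z=\emptyset$ (and writes $\tilde\beta^{n}\wedge\tilde\alpha$ where $\tilde\beta^{n-1}\wedge\tilde\alpha$ is meant), while your preliminary ``$\dim Z=2$'' case is vacuous since $Z$, being a hypersurface in $M^{2n}$, is odd-dimensional.
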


\begin{proof}
Suppose that $H^2(Z)=0$. Then $d\tilde\beta=0$ on $Z$ implies that $\tilde\beta=d\mu$ for some one-form $\mu$ defined on $Z$. Since $\tilde\beta^n\wedge\tilde\alpha$ is a volume form on $Z$ (see \cite{guimipi}), we have
$$0 \neq \text{vol}(Z) = \int_Z \tilde\beta^n\wedge\tilde\alpha = \int_Z (d\mu)^n\wedge\tilde\alpha = \int_{\partial Z}\mu\wedge(d\mu)^{n-1}\wedge\tilde\alpha = 0.$$
Thus $H^2(Z)$ must be non-trivial and Theorem \ref{thm:mazzeomelrose} gives us $^b H^3(M)\neq\{0\}$.
\end{proof}

\subsection{Poisson cohomology and $b$-Poisson cohomology}

In this section we study the relationship between $b$-cohomology and Poisson cohomology as a direct application rederive the computation of Poisson cohomology for two-dimensional $b$-manifolds obtained in \cite{Radko}.

We first prove that the cohomology of the Lichnerowicz complex associated to multivectorfields which are tangent to the exceptional hypersurface $Z$ is isomorphic to the $b$-cohomology. We then use a lemma of Marcut and Osorno \cite{marcutosorno2} to prove that this cohomology indeed does compute Poisson cohomology. A different approach for this computation would be to use a K�nneth and Mayer-Vietoris-type argument for the Poisson cohomology associated to a $b$-symplectic form. But this one which uses the key lemma in  \cite{marcutosorno2} seems to provide the shortest path.

For a general Poisson manifold $(M,\Pi)$, the Poisson structure $\Pi$ induces a differential operator $d_\Pi=[\Pi,\cdot]$ on the graded algebra of multivector fields on $M$ by extending the Lie bracket to multivectorfields. The cohomology of the complex of multivector fields $\Lambda^*(M)$
$$\ldots\longrightarrow\Lambda^{k-1}(M)\stackrel{d_\Pi}{\longrightarrow}\Lambda^{k}(M)\stackrel{d_\Pi}{\longrightarrow}\Lambda^{k+1}(M)\longrightarrow\ldots$$
is the Poisson cohomology $H_\Pi^*(M)$ of $M$ associated with the Poisson structure $\Pi$.

Let $^b\Lambda^k(M)$ denote the space of \textbf{$b$-multivector fields}, i.e., sections of the vector bundle $\Lambda^k(^b TM)$. Then the operator $d_\Pi=[\Pi,\cdot]$ is a differential on the subalgebra of $b$-multivector fields on $M$. The \textbf{$b$-Poisson cohomology} $^b H_\Pi^*(M)$ associated to the $b$-Poisson structure $\Pi$ on $M$ is the cohomology of the complex
$$\ldots\longrightarrow\,^b\Lambda^{k-1}(M)\stackrel{d_\Pi}{\longrightarrow}\,^b\Lambda^{k}(M)\stackrel{d_\Pi}{\longrightarrow}\,^b\Lambda^{k+1}(M)\longrightarrow\ldots$$

Explicit computations of Poisson cohomology are close to impossible in the general Poisson case. A simple example in which Poisson cohomology can be computed is the case of symplectic manifolds, for which the Poisson cohomology is isomorphic to de Rham cohomology. This is because non-degeneracy of the symplectic form allows us to define a bundle isomorphism between $T^*M$ and $TM$. Similarly, in the $b$-symplectic case, non-degeneracy of the $b$-symplectic form gives a bundle isomorphism between $^b T^*M$ and $^b TM$, which translates to an isomorphism between $b$-de Rham cohomology and $b$-Poisson cohomology.

\begin{theorem}\label{thm:liealgebroid}
Let $(M,Z)$ be a $b$-symplectic manifold, and $\Pi$ the corresponding $b$-Poisson structure. Then, the $b$-Poisson cohomology $^b H_\Pi^*(M)$ is isomorphic to the $b$-de Rham cohomology $^b H^*(M)$.
\end{theorem}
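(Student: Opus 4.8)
The plan is to mimic the classical argument that Poisson cohomology of a symplectic manifold agrees with de Rham cohomology, but carried out in the $b$-category. The key point is that a $b$-symplectic form $\omega$ is, by definition, of maximal rank as a section of $\Lambda^2({}^bT^*M)$ at every point of $M$ (including along $Z$), so it induces a vector bundle isomorphism
$$\omega^\flat\colon {}^bTM \longrightarrow {}^bT^*M, \qquad v\mapsto \iota_v\omega,$$
with inverse $\Pi^\sharp$ coming from the dual $b$-bivector field $\Pi$. First I would spell out that this bundle isomorphism extends to an isomorphism of the exterior algebras $\Lambda^k({}^bTM)\cong\Lambda^k({}^bT^*M)$ for every $k$, hence an isomorphism of the underlying graded modules of sections ${}^b\Lambda^k(M)\cong{}^b\Omega^k(M)$.

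The substance of the proof is then to check that this isomorphism intertwines the two differentials: $d_\Pi=[\Pi,\cdot\,]$ on $b$-multivector fields and the $b$-de Rham differential $d$ on $b$-forms. Away from $Z$ this is precisely the standard fact for symplectic manifolds — the nondegeneracy of $\omega$ together with $d\omega=0$ forces $\omega^\flat$ to be a chain map, the computation being the usual one with Cartan's magic formula and the graded Jacobi identity for the Schouten bracket. Since $M\setminus Z$ is dense in $M$ and all the objects in play (the bundle map $\omega^\flat$, the operators $d$ and $d_\Pi$, and the $b$-forms and $b$-multivector fields themselves) are smooth sections over all of $M$, the identity $\omega^\flat\circ d_\Pi = d\circ\omega^\flat$ extends by continuity from $M\setminus Z$ to $M$. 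Thus $\omega^\flat$ is an isomorphism of cochain complexes $({}^b\Lambda^*(M),d_\Pi)\cong({}^b\Omega^*(M),d)$, and passing to cohomology gives ${}^bH^*_\Pi(M)\cong{}^bH^*(M)$.

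Concretely, the cleanest way to organize the chain-map verification is to observe that ${}^bTM$ with anchor map ${}^bTM\to TM$ and the Lie bracket of $b$-vector fields is a Lie algebroid (the $b$-tangent Lie algebroid of $(M,Z)$), and that a nondegenerate closed $2$-cocycle $\omega$ in its Lie algebroid de Rham complex — which is exactly what a $b$-symplectic form is — induces an isomorphism between the Lie algebroid cohomology (here ${}^bH^*(M)$) and the Poisson cohomology of the dual $b$-Poisson structure. This is the $b$-analogue of the statement that a symplectic Lie algebroid has de Rham cohomology isomorphic to its Poisson cohomology; the proof is formally identical to the symplectic case once one has the bundle isomorphism, because all the identities are algebraic consequences of $d\omega=0$, nondegeneracy, and the compatibility of $[\cdot,\cdot]$ with the anchor.

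The main obstacle — really the only place requiring care — is the behaviour along $Z$: one must make sure that $\omega^\flat$ and its inverse $\Pi^\sharp$ are genuinely bundle isomorphisms of ${}^bTM$ and ${}^bT^*M$ over all of $M$, not just over the symplectic part $M\setminus Z$, and that the chain-map identity, verified pointwise on the dense open set $M\setminus Z$, indeed propagates to $Z$ by smoothness. Both are straightforward given that $b$-nondegeneracy is imposed at every point and that $d$, $d_\Pi$, wedge and interior products are all continuous (indeed smooth-section-valued) operations, but they are exactly the points where the argument differs from the naive ``restrict to $M\setminus Z$'' reasoning, so I would state them explicitly.
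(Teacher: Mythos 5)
Your proposal is correct and follows essentially the same route as the paper: nondegeneracy of the $b$-symplectic form (equivalently of $\Pi$ as a section of $\Lambda^2({}^bTM)$) gives a bundle isomorphism between $\Lambda^k({}^bT^*M)$ and $\Lambda^k({}^bTM)$ for every $k$, and the paper concludes exactly as you do by noting that this map intertwines the $b$-de Rham differential with $d_\Pi$ — there via the classical Lichnerowicz identity $\natural(d\eta)=-[\Pi,\natural(\eta)]$ — so the two complexes are isomorphic. Your density-plus-smoothness argument on $M\setminus Z$ (or equivalently the $b$-tangent Lie algebroid formulation) is simply a valid way of supplying the verification of that identity along $Z$, which the paper delegates to the classical formula.
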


\begin{proof}
We define the operator $\natural: \,^bT^*M\rightarrow\, ^b TM$ such that for $\alpha, \beta\in\,^b\Omega^1(M)$ we have
$$<\alpha\wedge \beta,\Pi >=<\beta, \natural(\alpha)>.$$
By taking exterior powers of $\natural$ we obtain for each $k$ a homomorphim between $\Lambda^k(^b T^*M)$ and $\Lambda^k(^b TM)$, and hence also between $^b\Omega^k(M)$ and $^b\Lambda^k(M)$. Because the Poisson structure $\Pi$ is non-degenerate in this $b$-context, this homomorphism is an isomorphism, which we will also denote by $\natural$.

The classical formula for Lichnerowicz complexes
$$\natural(d\eta)=-[\Pi, \natural(\eta)]=-d_{\Pi}(\natural(\eta))$$
guarantees that the following diagram commutes

\xymatrix{
&...\ar[r]& \,^b\Lambda^{k-1}(M) \ar[r]^{d_{\pi}}& \,^b\Lambda^k(M) \ar[r]^{d_{\pi}} &\,^b \Lambda^{k+1}(M) \ar[r] &...
\\
&...\ar[r]& \,^b\Omega^{k-1}(M) \ar[u]^\natural \ar[r]^{d} & \,^b\Omega^k(M) \ar[u]^\natural \ar[r]^{d} & \,^b\Omega^{k+1}(M) \ar[u]^\natural \ar[r] &...,}
\noindent thus providing the desired isomorphism between the cohomologies of the two complexes.
\end{proof}

We are interested not only in the $b$-Poisson cohomology of a $b$-symplectic manifold $(M,Z)$, but also in its honest Poisson cohomology.  As it is observed in \cite{marcutosorno2} , the Lichnerowicz complex associated to these multivector fields (the ones tangent to $Z$) is a subcomplex of the complex computing Poisson cohomology of $M$. The following key lemma (which is proved in \cite{marcutosorno2}) shows that the inclusion $^b\Lambda^{k}(M)\subset \Lambda^{k}(M)$ induces an isomorphism in cohomology.

The key point to prove this lemma is to consider the normal form given by equation \ref{eq:normalform} in Section \ref{section:theextensionproblem}.

\begin{lemma}[Marcut-Osorno] \label{lem:marcut} The inclusion  $^b\Lambda^{k}(M)\subset \Lambda^{k}(M)$ induces an isomorphism
in cohomology and thus,

$$^b H_\Pi^*(M)\cong  H_\Pi^*(M)$$

\end{lemma}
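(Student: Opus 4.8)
The plan is to prove Lemma~\ref{lem:marcut} by a local-to-global argument, localizing the comparison between the two Lichnerowicz complexes to a tubular neighborhood of $Z$ where a normal form is available. Away from $Z$ the inclusion $^b\Lambda^k \subset \Lambda^k$ is an equality (since $^bTM|_{M\setminus Z} = TM|_{M\setminus Z}$), so the only thing to check is what happens near $Z$, and there the key input is the semilocal normal form of equation~\eqref{eq:normalform} in Section~\ref{section:theextensionproblem}, which identifies a neighborhood of a connected component $Z_i$ with $S^1 \times N$ carrying the model $b$-Poisson structure $\Pi = f(\theta)\frac{\partial}{\partial\theta}\wedge X + \pi$ of Example~\ref{basicexample}, or more precisely with the mapping-torus-type model furnished in Section~\ref{section:theextensionproblem}.

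The concrete steps are as follows. First I would set up the short exact sequence of complexes $0 \to {}^b\Lambda^\bullet(M) \hookrightarrow \Lambda^\bullet(M) \to Q^\bullet \to 0$, where $Q^\bullet$ is the quotient complex, supported on (an arbitrarily small neighborhood of) $Z$; by excision it suffices to understand $Q^\bullet$ on a tubular neighborhood $\mathcal U \cong Z \times (-\varepsilon,\varepsilon)$, and in fact the quotient is a skyscraper-type object along $Z$ since a multivector field and a $b$-multivector field differ only in the components involving $\frac{\partial}{\partial z}$ versus $z\frac{\partial}{\partial z}$. The point of Lemma~\ref{lem:marcut} is then equivalent to the statement that $Q^\bullet$ is acyclic. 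Second, using the normal form \eqref{eq:normalform}, I would write an explicit contracting homotopy for $Q^\bullet$: in the model coordinates $(\theta, \text{leaf coordinates})$ near $Z$ the Poisson differential $d_\Pi = [\Pi,\cdot]$ becomes explicit, and the obstruction to a $b$-multivector field being a genuine one is governed by divisibility by the defining function $f$ (i.e.\ by $\theta$ in the model), which one controls by an integration-in-$\theta$ homotopy operator combined with the fact that, transverse to $Z$, the model is the $b$-Poisson structure on a surface whose Poisson/$b$-Poisson cohomology comparison is Radko's computation. Third, I would globalize: since the normal form near each component $Z_i$ is uniform and the homotopy is local and can be patched with a partition of unity subordinate to the tubular neighborhood, one gets acyclicity of $Q^\bullet$ globally, hence the long exact sequence in cohomology forces $^bH^*_\Pi(M) \xrightarrow{\ \sim\ } H^*_\Pi(M)$.

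The main obstacle — and the reason the authors quote \cite{marcutosorno2} rather than reproving it — is the construction of the homotopy operator in the presence of the $z\frac{\partial}{\partial z}$ singularity: one must show that the quotient complex $Q^\bullet$ is genuinely acyclic and not merely that its cohomology injects or surjects, and this requires a careful analysis of how $d_\Pi$ interacts with the filtration by powers of the defining function near $Z$. Unlike the de~Rham side (Theorem~\ref{thm:mazzeomelrose}), where the residue map $j$ is patently surjective and the sequence splits, here the algebraic structure of the Schouten bracket on the model $\Pi$ must be unwound to see that every closed multivector field tangent to $Z$ modulo the exact ones is represented by a genuine $b$-multivector field; this is exactly the content of the key lemma proved in \cite{marcutosorno2}, and it is the step I would expect to absorb essentially all the work.
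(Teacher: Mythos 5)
First, on the comparison itself: the paper does not actually prove Lemma~\ref{lem:marcut} — it quotes it from \cite{marcutosorno2}, remarking only that the key point is the normal form (\ref{eq:normalform}) near $Z$. Your overall skeleton (short exact sequence of Lichnerowicz complexes $0\to{}^b\Lambda^\bullet(M)\to\Lambda^\bullet(M)\to Q^\bullet\to 0$, the observation that $Q^\bullet$ is concentrated along $Z$, the equivalence of the lemma with acyclicity of $Q^\bullet$, and the normal form as the main input) is sound and consistent with the paper's hint, and your deferral of the hard step to \cite{marcutosorno2} mirrors what the paper itself does. So as a reduction your proposal is fine; the problem is that the two steps you sketch in place of the deferred core would not work as written.

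Concretely: (i) there is nothing to integrate in the transverse direction, and your identification of the defining function with the circle variable $\theta$ of Example~\ref{basicexample} muddles this. In the model dual to (\ref{eq:normalform}) one has $\Pi=t\frac{\partial}{\partial t}\wedge v+\pi$ with $v,\pi$ pulled back from $Z$; writing a $k$-vector field as $A+\frac{\partial}{\partial t}\wedge B$, it lies in ${}^b\Lambda^k$ exactly when $B$ is divisible by $t$, so $Q^k\cong\Gamma(\Lambda^{k-1}TZ)$ (sections over $Z$ alone, after trivializing the normal bundle by $t$), with induced differential, up to signs, $B\mapsto d_\pi B\pm v\wedge B$. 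Acyclicity is thus an algebraic fact about this complex on $Z$: for instance, contraction $\iota_\alpha$ with the closed defining one-form ($\alpha(v)=1$, $\pi^\sharp\alpha=0$, $d\alpha=0$) kills the $d_\pi$-part of the anticommutator and satisfies $\iota_\alpha(v\wedge B)+v\wedge\iota_\alpha B=B$, giving a contracting homotopy. An "integration-in-$\theta$ operator'' has no role here, and Radko's surface computation is a cohomology-level statement in dimension two — it cannot supply a chain homotopy in higher dimensions without exactly the K\"unneth/Mayer--Vietoris machinery the paper says it is avoiding (and within this paper Radko's computation is rederived \emph{from} the present lemma, so one must quote her original proof to avoid circularity). (ii) Patching local contracting homotopies with a partition of unity fails, because $d_\Pi=[\Pi,\cdot]$ is not $C^\infty(M)$-linear: $[\Pi,\rho W]=\rho[\Pi,W]\pm\Pi^\sharp(d\rho)\wedge W$, so cutoff functions destroy the homotopy identity by Hamiltonian error terms. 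Fortunately this step is also unnecessary: by Theorem~\ref{thm:extension} (for compact $Z$, the setting of Theorem~\ref{thm:poissoncohomology}) the normal form holds on an entire tubular neighborhood of each component of $Z$, and $Q^\bullet$ vanishes identically off $Z$, so once the model computation is done there is no local-to-global patching left to perform.
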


Now combining Theorem \ref{thm:liealgebroid} with Lemma \ref{lem:marcut}, we obtain the following result, which is Proposition 1 in Section 5 of \cite{marcutosorno2}.

\begin{theorem}\label{thm:poissoncohomology}
Let $(M,Z)$ be a compact $b$-symplectic manifold, $\Pi$ the corresponding $b$-Poisson structure.
Then the Poisson cohomology groups of $M$ are computable by
$$H^k_{\Pi}(M)\cong \,^bH^k(M)$$
\end{theorem}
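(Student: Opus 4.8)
The plan is to assemble the proof of Theorem~\ref{thm:poissoncohomology} directly from the two results proved immediately before it, so the proof is essentially a one-line chaining of isomorphisms. First I would invoke Theorem~\ref{thm:liealgebroid}, which gives a canonical isomorphism $^bH_\Pi^*(M)\cong \,^bH^*(M)$ between the $b$-Poisson cohomology of $(M,Z)$ and its $b$-de Rham cohomology, obtained from the non-degeneracy isomorphism $\natural\colon{}^bT^*M\to{}^bTM$ induced by $\Pi$. Then I would apply Lemma~\ref{lem:marcut} (Marcut--Osorno), which asserts that the inclusion of the complex of $b$-multivector fields into the complex of all multivector fields, $^b\Lambda^k(M)\subset\Lambda^k(M)$, induces an isomorphism in cohomology, that is, $^bH_\Pi^*(M)\cong H_\Pi^*(M)$. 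Composing these two isomorphisms yields $H^k_\Pi(M)\cong\,^bH^k(M)$, which is exactly the statement.

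Concretely, the proof text would read roughly as follows. By Theorem~\ref{thm:liealgebroid}, the map $\natural$ intertwines the $b$-de Rham differential $d$ with the Poisson differential $d_\Pi=[\Pi,\cdot]$ on $b$-multivector fields, so it descends to an isomorphism $^bH^k(M)\xrightarrow{\ \sim\ }{}^bH^k_\Pi(M)$ for every $k$. By Lemma~\ref{lem:marcut}, the inclusion $^b\Lambda^\bullet(M)\hookrightarrow\Lambda^\bullet(M)$ of chain complexes (it is a subcomplex because $d_\Pi$ preserves tangency to $Z$, as noted before the lemma) induces an isomorphism $^bH^k_\Pi(M)\xrightarrow{\ \sim\ }H^k_\Pi(M)$. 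Hence $H^k_\Pi(M)\cong\,^bH^k_\Pi(M)\cong\,^bH^k(M)$, completing the proof. One should perhaps remark that compactness of $M$ (and hence of $Z$) is used in Lemma~\ref{lem:marcut} via the normal form of equation~\eqref{eq:normalform}, which is why the hypothesis appears in the statement; combined with the $b$-Mazzeo--Melrose theorem (Theorem~\ref{thm:mazzeomelrose}) one even gets the fully explicit description $H^k_\Pi(M)\cong H^k(M)\oplus H^{k-1}(Z)$, though that is not strictly needed for the statement as written.

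Since both inputs are granted — Theorem~\ref{thm:liealgebroid} is proved in the excerpt and Lemma~\ref{lem:marcut} is cited from \cite{marcutosorno2} — there is essentially no obstacle in this proof; it is pure bookkeeping of two isomorphisms. The only thing requiring a little care is making sure the two isomorphisms are compatible in the sense of being defined on the same complexes: Theorem~\ref{thm:liealgebroid} produces an isomorphism onto $^bH^k_\Pi(M)$ (cohomology of the $b$-multivector complex with differential $d_\Pi$), and Lemma~\ref{lem:marcut} is precisely the statement that this same cohomology group coincides with the honest Poisson cohomology $H^k_\Pi(M)$, so the composition is well-defined. If I wanted to be thorough I would also note explicitly that the isomorphism is natural in $(M,Z,\Pi)$, inherited from the naturality of $\natural$ and of the inclusion of complexes, but for the bare statement of Theorem~\ref{thm:poissoncohomology} this is optional. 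In short: the proof is the composite $H^k_\Pi(M)\cong\,^bH^k_\Pi(M)\cong\,^bH^k(M)$, citing Lemma~\ref{lem:marcut} for the first isomorphism and Theorem~\ref{thm:liealgebroid} for the second.
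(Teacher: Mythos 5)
Your proposal is correct and follows exactly the paper's own argument: the paper proves Theorem \ref{thm:poissoncohomology} precisely by composing the isomorphism $^bH^*_\Pi(M)\cong\,^bH^*(M)$ of Theorem \ref{thm:liealgebroid} with the Marcut--Osorno isomorphism $^bH^*_\Pi(M)\cong H^*_\Pi(M)$ of Lemma \ref{lem:marcut}. Your additional remarks on where the normal form (\ref{eq:normalform}) enters and on the refinement via Theorem \ref{thm:mazzeomelrose} match the paper's surrounding discussion as well.
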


In the two-dimensional case, we can use  Theorem \ref{thm:poissoncohomology} to reprove a result of  Radko's \cite{Radko}.

\begin{corollary} [Radko]
Let $(M,Z)$ be a compact connected two-dimensional $b$-symplectic manifold, where $M$ is of genus $g$ and $Z$ a union of $n$ curves on $M$. Then the Poisson cohomology of $M$ is given by
\begin{align*}
H_{\Pi}^0(M)&=\mathbb R\\
H_{\Pi}^1(M)&=\mathbb R^{n+2g}\\
H_{\Pi}^2(M)&=\mathbb R^{n+1}.
\end{align*}
\end{corollary}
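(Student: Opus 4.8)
The plan is to apply Theorem \ref{thm:poissoncohomology}, which reduces the computation of $H^k_\Pi(M)$ to the computation of $^bH^k(M)$, and then to invoke the $b$-Mazzeo-Melrose theorem (Theorem \ref{thm:mazzeomelrose}) to express $^bH^k(M)$ in terms of the ordinary de Rham cohomology of $M$ and of $Z$. Here $M$ is a compact connected orientable surface of genus $g$, so $H^0(M)=\RR$, $H^1(M)=\RR^{2g}$, $H^2(M)=\RR$, and $Z$ is a disjoint union of $n$ circles, so $H^0(Z)=\RR^n$, $H^1(Z)=\RR^n$, and $H^k(Z)=0$ for $k\geq 2$. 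Everything then comes down to bookkeeping with these dimensions.

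First I would record that $^bH^0(M)=H^0(M)=\RR$: a $b$-function is just a smooth function, and since $M$ is connected the closed (i.e.\ locally constant) ones are the constants. This gives $H^0_\Pi(M)=\RR$. Next, for $k=1$, Theorem \ref{thm:mazzeomelrose} gives $^bH^1(M)\cong H^1(M)\oplus H^0(Z)\cong\RR^{2g}\oplus\RR^n=\RR^{2g+n}$, hence $H^1_\Pi(M)=\RR^{n+2g}$. Finally, for $k=2$, the theorem gives $^bH^2(M)\cong H^2(M)\oplus H^1(Z)\cong\RR\oplus\RR^n=\RR^{n+1}$, so $H^2_\Pi(M)=\RR^{n+1}$. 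Since $M$ is a surface, $^b\Omega^k(M)=0$ for $k\geq 3$ and there is nothing further to check.

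There is essentially no hard step here: the corollary is a direct substitution into two theorems proved earlier in the paper, and the only thing to be careful about is getting the indices in the Mazzeo-Melrose splitting $^bH^k(M)\cong H^k(M)\oplus H^{k-1}(Z)$ right and quoting the standard cohomology of a genus-$g$ surface and of a finite union of circles. One might add a remark that this recovers Radko's original computation, which in \cite{Radko} was obtained by a direct analysis of the Lichnerowicz complex rather than by passing through $b$-cohomology; the present derivation is shorter precisely because Theorems \ref{thm:liealgebroid} and \ref{thm:poissoncohomology} have already done the work of relating Poisson cohomology to the more tractable $b$-de Rham theory.
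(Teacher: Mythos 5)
Your proposal is correct and follows exactly the paper's own route: apply Theorem \ref{thm:poissoncohomology} to identify $H^k_\Pi(M)$ with $^bH^k(M)$, then the $b$-Mazzeo-Melrose splitting of Theorem \ref{thm:mazzeomelrose} together with the standard cohomology of a genus-$g$ surface and of $n$ disjoint circles. Nothing is missing; the dimension bookkeeping matches the paper's proof.
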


\begin{proof}
 By Theorems \ref{thm:poissoncohomology} and \ref{thm:mazzeomelrose} we have
$$H^k_{\Pi}(M)\cong \,^bH^k(M)\cong H^k(M)\oplus H^{k-1}(Z).$$
The result is then immediate from the fact that the nonzero cohomology groups of an oriented compact connected surface $M$ of genus $g$  are $H^0(M)=H^2(M)=\mathbb{R}$ and $H^1(M)=\mathbb{R}^{2g}$, and those of a union $Z$ of $n$ curves are $H^0(Z)=H^1(Z)=\mathbb{R}^n$.
\end{proof}

\section{Normal Forms}\label{section:normalforms}

With the necessary tools and notions now in place, we can prove $b$-analogues of standard symplectic geometry theorems.

\subsection{Relative Moser theorem for $b$-symplectic manifolds}

\begin{theorem}\label{Moser}
Let $\omega_0$ and $\omega_1$ be two $b$-symplectic forms on $(M,Z)$. If $\omega_0|_Z=\omega_1|_Z$, then there exist neighborhoods $\cU_0,\cU_1$ of $Z$ in $M$ and a diffeomorphism $\gamma:\cU_0\rightarrow\cU_1$ such that $\gamma|_Z=\text{id}_Z$ and $\gamma^*\omega_1=\omega_0$.
\end{theorem}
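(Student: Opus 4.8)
The plan is to follow the classical Moser path argument, adapted to the $b$-symplectic setting using the tools developed above. First I would observe that since $\omega_0|_Z = \omega_1|_Z$, the $b$-two-form $\omega_1 - \omega_0$ vanishes on $Z$, so by Proposition \ref{honest} it is an honest de Rham form $\sigma \in \Omega^2(M)$. Shrinking to a tubular neighborhood $\cU \cong Z \times (-\varepsilon, \varepsilon)$ of $Z$, which is homotopy equivalent to $Z$, I would use a relative Poincar\'e-type lemma (the standard homotopy operator for the retraction of $\cU$ onto $Z$) to write $\sigma = d\mu$ for some $\mu \in \Omega^1(\cU)$; one can moreover arrange $\mu|_Z = 0$, since $\sigma|_Z = i^*\sigma = 0$ and the homotopy operator kills forms that pull back trivially to $Z$. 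Viewing $\mu$ as a $b$-one-form with $\mu|_Z = 0$, it remains a $b$-one-form (indeed an honest one).

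Next I would set $\omega_t = (1-t)\omega_0 + t\omega_1 = \omega_0 + t\,d\mu$ for $t \in [0,1]$. The key point is that $\omega_t$ is $b$-symplectic for all $t$: on $Z$ we have $\omega_t|_Z = \omega_0|_Z = \omega_1|_Z$, which is of maximal rank in $\Lambda^2({}^bT^*M)|_Z$ by hypothesis, and nondegeneracy is an open condition, so after further shrinking $\cU$ the form $\omega_t$ is nondegenerate on all of $\cU$ for every $t \in [0,1]$ (compactness of $[0,1]$ lets one choose a uniform neighborhood). Then I would define the time-dependent $b$-vector field $v_t$ by $\iota_{v_t}\omega_t = -\mu$; this is well-defined and smooth because $\omega_t$ gives a bundle isomorphism ${}^bTM \to {}^bT^*M$ over $\cU$, and $v_t$ is genuinely a $b$-vector field, i.e.\ tangent to $Z$, because $\mu|_Z = 0$ forces $\iota_{v_t}\omega_t|_Z = 0$ and hence, by nondegeneracy of $\omega_t|_Z$, that $v_t|_Z = 0 \in {}^bT_pM$, in particular $v_t$ restricts to a vector field on $Z$ (in fact vanishing there).

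I would then let $\gamma_t$ be the flow of $v_t$. Since $v_t$ vanishes along $Z$, the flow fixes $Z$ pointwise, so $\gamma_t|_Z = \mathrm{id}_Z$; and since $v_t$ is a compactly-supported-in-a-smaller-neighborhood $b$-vector field (multiply $\mu$ by a bump function supported near $Z$ and equal to $1$ on a smaller neighborhood, which does not change the argument on that smaller neighborhood), the flow exists for all $t \in [0,1]$ on some neighborhood $\cU_0$ of $Z$, with $\gamma_t(\cU_0) \subset \cU$. The usual Moser computation gives
\begin{equation*}
\frac{d}{dt}\gamma_t^*\omega_t = \gamma_t^*\left(\cL_{v_t}\omega_t + \frac{d\omega_t}{dt}\right) = \gamma_t^*\left(d\iota_{v_t}\omega_t + d\mu\right) = \gamma_t^*(d(-\mu) + d\mu) = 0,
\end{equation*}
using Cartan's formula (valid in the $b$-de Rham complex since $d$ and $\iota$ behave as usual there) and $d\omega_t = 0$. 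Hence $\gamma_t^*\omega_t$ is independent of $t$, so $\gamma_1^*\omega_1 = \gamma_0^*\omega_0 = \omega_0$, and $\gamma := \gamma_1 : \cU_0 \to \cU_1 := \gamma_1(\cU_0)$ is the desired diffeomorphism with $\gamma|_Z = \mathrm{id}_Z$.

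The main obstacle, and the only place where the $b$-structure genuinely enters, is verifying that $v_t$ is tangent to $Z$ so that its flow preserves $Z$ and stays within the domain where everything is defined; this hinges on the observation that $\mu$ is an honest form vanishing on $Z$ (Proposition \ref{honest} together with the relative Poincar\'e lemma) and on the nondegeneracy of $\omega_t|_Z$ as an element of $\Lambda^2({}^bT^*M)$. The rest is the routine Moser argument, transported verbatim into the $b$-de Rham complex, whose formal properties ($d^2 = 0$, Cartan's magic formula, the bundle isomorphism from a nondegenerate $b$-two-form) have all been set up in the preceding sections.
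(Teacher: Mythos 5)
Your proposal is correct and follows essentially the same route as the paper's proof: the linear Moser path, Proposition \ref{honest} to see that $\omega_1-\omega_0$ is an honest closed form, a Poincar\'e lemma near $Z$ producing a primitive that vanishes along $Z$ (the paper encodes this by writing the primitive as $f\mu$ with $f$ a defining function, which is the same as your $\mu|_Z=0$ from the radial homotopy operator), and then solving $\iota_{v_t}\omega_t=\pm\mu$ so that $v_t$ vanishes on $Z$ and its flow fixes $Z$ pointwise. Your treatment of the nondegeneracy of $\omega_t$ near $Z$ and of the existence of the flow up to time $1$ is, if anything, slightly more explicit than the paper's.
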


\noindent Recall that for $p\in Z$, one should interpret $\omega|_p$ as sitting in {$\Lambda^2(^b T_p^*M)$}.

\begin{proof}(using Moser trick)
Let $\omega_t=(1-t)\omega_0+t\omega_1$. We will prove that there exists a neighborhood $\cU$ of $Z$ in $M$ and an isotopy $\gamma_t:\cU\rightarrow M$, with $0\leq t\leq1$ such that $\gamma_t|_Z=\text{id}_Z$ and
\begin{equation}\label{gammat}\gamma_t^*\omega_t=\omega_0.\end{equation}

If such a $\gamma_t$ is to exist, by differentiating (\ref{gammat}) we will get
\begin{equation}\label{difgammat}\cL_{v_t}\omega_t=\omega_0-\omega_1,\end{equation}
where $v_t=\frac{d\gamma_t}{dt}\circ\gamma_t^{-1}$. Note that since $\gamma_t|_Z=\text{id}_Z$, we would have $v_t|_Z=0$, so $v_t$ would be a $b$-vector field vanishing on $Z$.

Because $(\omega_0-\omega_1)|_Z=0$, by Proposition \ref{honest} the $b$-form $(\omega_0-\omega_1)$ is also an honest de Rham form, and since it is closed, by the Poincar\'{e} lemma there exists a one-form $\mu\in\Omega^1(M)$ such that
$(\omega_0-\omega_1)=d(f\mu)$ on a neighborhood of $Z$, where $f:(M,Z)\to(\mathbb{R},0)$ is a defining function for $Z$. Then, (\ref{difgammat}) becomes
\begin{equation}\label{vt}\iota_{v_t}\omega_t=f\mu,\end{equation}
which is can be solved for $v_t$ in a small enough neighborhood $\cU$ of $Z$ where $\omega_t$ is $b$-symplectic {(note that such a neighborhood always exists, since $\omega_0|_Z=\omega_1|_Z\neq 0$)}. Moreover, since
the right hand side of (\ref{vt}) vanishes at points of $Z$, the $b$-vector field $v_t$ thus defined does too.

We can get a suitable $\gamma_t$ by integrating $v_t$, and the vector field vanishing on $Z$ implies that $\gamma_t|_Z=\text{id}_Z$ as desired. Now set
$\gamma:=\gamma_1$ and the open sets $\cU_0:=\cU$ and $\cU_1:=\gamma_1(\cU)$.
\end{proof}

An alternative statement of Theorem \ref{Moser} is the following:

\begin{theorem}\label{theorem:relativemoser2}
Let $\omega_0$ and $\omega_1$ be two $b$-symplectic forms on $(M,Z)$. If they induce on $Z$ the same restriction of the Poisson structure and their modular vector fields differ on $Z$ by a Hamiltonian vector field, then there exist neighborhoods $\cU_0,\cU_1$ of $Z$ in $M$ and a diffeomorphism $\gamma:\cU_0\rightarrow\cU_1$ such that $\gamma|_Z=\text{id}_Z$ and $\gamma^*\omega_1=\omega_0$.
\end{theorem}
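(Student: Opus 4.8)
The plan is to deduce Theorem \ref{theorem:relativemoser2} from Theorem \ref{Moser} by showing that the hypotheses here — same restricted Poisson structure on $Z$, and modular vector fields differing by a Hamiltonian vector field — allow us to modify one of the forms, via a $b$-symplectomorphism fixing $Z$, so that the two forms actually agree on $Z$ as sections of $\Lambda^2(\,^bT^*M)|_Z$; then Theorem \ref{Moser} applies verbatim. So the heart of the matter is a normal-form statement for $\omega|_Z$: the data of $\omega|_Z$ (equivalently, of the pair $(\tilde\alpha,\tilde\beta_1)$ appearing in Proposition \ref{thetamu} and \eqref{eq:omegathetamu}) is captured, up to the action of diffeomorphisms of a neighborhood of $Z$ restricting to the identity on $Z$... no, actually restricting to a diffeomorphism of $Z$ preserving the relevant structure — up to the restricted Poisson structure $\Pi_Z$ together with the modular class in $H^1_{\mathrm{Poisson}}(Z)$.

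First I would recall, using Proposition \ref{thetamu} and Remark \ref{rem:remark}, that $\omega|_Z$ determines and is determined by: (i) the corank-one foliation on $Z$ given by the intrinsically-defined $\tilde\alpha$, together with its leafwise symplectic form $i_L^*\tilde\beta$; and (ii) the "transverse" data, which is exactly what the modular vector field sees — indeed Remark \ref{rem:remark} says $\tilde\alpha(v_{\mathrm{mod}}|_Z)=1$, so the modular vector field is a distinguished transversal to the foliation. The restricted Poisson structure $\Pi_Z$ is precisely the leafwise-symplectic-foliation data (i). So the two forms $\omega_0,\omega_1$ induce the same $\Pi_Z$ by hypothesis, meaning their associated foliations-with-leafwise-symplectic-forms agree. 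The remaining discrepancy between $\omega_0|_Z$ and $\omega_1|_Z$ is therefore a "vertical" one, measured by the difference of their modular vector fields along $Z$; by hypothesis this difference is Hamiltonian, i.e.\ zero in $H^1_{\mathrm{Poisson}}(Z)$.

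Next I would produce the required modification. Since $\Pi_Z$ agrees, one can first normalize (Remark \ref{remark} / Proposition \ref{prop:localb}) so that both forms have the shape $dx_1\wedge\frac{dy_1}{y_1}+\tilde\beta$ on $Z$ with the \emph{same} $\tilde\beta$; the only freedom left is in the $\tilde\alpha$-component, i.e.\ in the closed defining one-form for the foliation, and equivalently in the modular vector field. Writing $v_i = v_{\mathrm{mod}}(\omega_i)|_Z$ and using $v_1 - v_0 = u_h$ Hamiltonian for some $h\in C^\infty(Z)$, I would build a flow on a tubular neighborhood $\cU\cong Z\times(-1,1)$ — essentially the time-one map of (a cutoff of) the $b$-vector field whose $Z$-component is $u_h$ and which is tangent to the $Z$-slices — whose pullback adjusts $\omega_1|_Z$ to $\omega_0|_Z$. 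Concretely this amounts to the observation that changing the defining function $f\mapsto fe^{g}$ shifts $\frac{df}{f}$ by the exact form $dg$ and hence shifts the modular/transverse data by a Hamiltonian amount; one chooses $g$ on $\cU$ with $i^*dg$ matching the discrepancy, integrates, and replaces $\omega_1$ by the pullback. After this replacement $\omega_0|_Z=\omega_1|_Z$ exactly, and Theorem \ref{Moser} gives the diffeomorphism $\gamma$ with $\gamma|_Z=\mathrm{id}_Z$ and $\gamma^*\omega_1=\omega_0$; composing with the modification diffeomorphism (which also restricts to a map of $Z$ that one checks is, after a further isotopy, the identity — or one simply absorbs it, noting the statement only claims $\gamma|_Z=\mathrm{id}_Z$ which can be arranged since the modification is isotopic to the identity rel the leafwise data) completes the proof.

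The main obstacle I anticipate is the bookkeeping in the "modification" step: making precise the claim that the modular-vector-field discrepancy being Hamiltonian is \emph{exactly} the obstruction to matching $\omega_0|_Z$ and $\omega_1|_Z$ after a neighborhood diffeomorphism fixing $Z$ pointwise. One must be careful that the diffeomorphism built from $u_h$ restricts to the identity on $Z$ (not merely preserves $Z$) — this is why the construction should act in the normal direction, using $h$ to tilt the collar rather than to flow within $Z$ — and one must check the intrinsic-ness claims of Proposition \ref{thetamu} are strong enough that no leftover leafwise discrepancy appears. An alternative, cleaner route avoiding an explicit modification: run the Moser argument of Theorem \ref{Moser} directly but now only assuming $(\omega_0-\omega_1)|_Z$ is "Hamiltonian-like" rather than zero — then $\omega_0-\omega_1 = d\nu$ for a $b$-one-form $\nu$ with $\nu|_Z$ controlled by $h$, and one solves $\iota_{v_t}\omega_t=\nu$ for a $b$-vector field $v_t$ that is tangent to $Z$ (not necessarily vanishing there) whose restriction to $Z$ integrates to a flow preserving $Z$; one then post-composes by the inverse of that $Z$-flow. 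I would present whichever of these two is shorter once the details are in hand, but I expect the second (pure Moser) version to be the one that goes through with least friction.
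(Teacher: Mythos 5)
Your strategy is essentially correct, but it is genuinely different from the paper's: the paper performs no modification at all, arguing instead that the hypotheses already force $\omega_0|_Z=\omega_1|_Z$ (it proves $\tilde{\alpha}_0=\tilde{\alpha}_1$ by evaluating on the common transversal $v_{\text{mod}\,0}|_Z$, and then asserts $\tilde{\beta}_0=\tilde{\beta}_1$ from the equality of the leafwise restrictions together with $\iota_{v_{\text{mod}\,j}|_Z}\tilde{\beta}_j=0$), after which Theorem \ref{Moser} applies verbatim. Your extra normalization step is in fact well motivated: the paper's final inference contracts each $\tilde{\beta}_j$ with its \emph{own} transversal, and when the Hamiltonian $h$ with $v_{\text{mod}\,1}|_Z-v_{\text{mod}\,0}|_Z=u_h$ is not constant along the leaves the two restrictions need not coincide. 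For instance, on $(\RR^4,\{y_1=0\})$ take $\omega_0=dx_1\wedge\frac{dy_1}{y_1}+dx_2\wedge dy_2$ and $\omega_1=\omega_0+dx_1\wedge dx_2$: they induce the same Poisson structure on $Z$, their modular vector fields for the Euclidean volume are (up to a common sign) $\partial_{x_1}$ and $\partial_{x_1}+\partial_{y_2}$, which differ by the Hamiltonian vector field of $\pm x_2$, and yet $\omega_1|_Z\neq\omega_0|_Z$; here the shear $(x_1,y_1,x_2,y_2)\mapsto(x_1,y_1e^{-x_2},x_2,y_2)$ --- exactly your $f\mapsto fe^{g}$ --- fixes $Z$ pointwise and carries $\omega_1$ to $\omega_0$. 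So your collar--shear mechanism is not an optional detour; it is the step that handles the part of the discrepancy the short argument does not see, at the price of a slightly longer proof.

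Two corrections to your bookkeeping before you write it up. First, the residual discrepancy does not sit in the $\tilde{\alpha}$-component, contrary to your sketch: $\tilde{\alpha}_0=\tilde{\alpha}_1$ comes for free (both annihilate the common leaves and equal $1$ on $v_{\text{mod}\,0}|_Z$, since $u_h$ is tangent to the leaves). It sits in $\tilde{\beta}$: choosing representatives with $\iota_{v_{\text{mod}\,j}|_Z}\tilde{\beta}_j=0$, one computes $\tilde{\beta}_1-\tilde{\beta}_0=\tilde{\alpha}\wedge\theta$ with $\theta=\iota_{v_{\text{mod}\,0}|_Z}(\tilde{\beta}_1-\tilde{\beta}_0)=-\iota_{u_h}\tilde{\beta}_1$, which agrees leafwise with $dh$ up to sign; hence $\tilde{\beta}_1-\tilde{\beta}_0=\pm\,\tilde{\alpha}\wedge dh$, and the function $g$ in your shear must be $\mp h$ pulled back by the collar projection (cut off away from $Z$). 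After this replacement $\omega_0|_Z=\omega_1|_Z$ exactly and Theorem \ref{Moser} finishes, with all diffeomorphisms restricting to the identity on $Z$. Second, prefer this route to your alternative ``Moser with $v_t$ tangent to $Z$, then undo the $Z$-flow'': undoing the flow requires extending a nontrivial diffeomorphism of $Z$ to a $b$-symplectomorphism of $\omega_1$ near $Z$, which is extra work that the shear argument avoids.
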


\begin{proof}
If suffices to show that $\omega_0|_Z=\omega_1|_Z$, then apply Theorem \ref{Moser}.

Fix a defining function $f$ for $Z$ and write $\omega_j=\alpha_j\wedge\frac{df}{f}+\beta_j$, and also $\tilde{\alpha_j}=i^*\alpha_j$ and $\tilde{\beta_j}=i^*\beta_j$, with $j=0,1$ and $i:Z\hookrightarrow M$ the inclusion.
What we want to show is that $\tilde{\alpha_0}=\tilde{\alpha_1}$ and $\tilde{\beta_0}=\tilde{\beta_1}$. Note that $v_{\text{mod}\,j}$ being the modular vector field implies that $\tilde{\alpha_j}(v_{\text{mod}\,j}|_Z)=1$ and $\iota_{v_{\text{mod}\,j}|_Z}\tilde{\beta_j}=0$.

The pullback of $\tilde{\alpha}_j$ to each symplectic leaf in $Z$ vanishes (see Remark \ref{remark}). Therefore, in order to conclude that $\tilde{\alpha}_0=\tilde{\alpha}_1$ we just need to check that these forms agree when contracted with a vector field transversal to the leaves, for example $v_{\text{mod}\,0}|_Z$:  indeed, $\tilde{\alpha_0}(v_{\text{mod}\,0}|_Z)=1$ and
\begin{align*}
\tilde{\alpha_1}(v_{\text{mod}\,0}|_Z)&=\tilde{\alpha_1}(v_{\text{mod}\,1}|_Z + (v_{\text{mod}\,0}|_Z-v_{\text{mod}\,1}|_Z))\\
&= 1
\end{align*}
 because $(v_{\text{mod}\,0}|_Z-v_{\text{mod}\,1}|_Z)$ is a Hamiltonian vector field on $Z$ and therefore tangent to $Z$.

Because $\omega_0$ and $\omega_1$ induce the same restriction of Poisson structure on the hypersurface $Z$, we have $i^*_L\tilde{\beta_0}=i^*_L\tilde{\beta_1}$ for any symplectic leaf $L$, with $i^*_L: L\hookrightarrow Z$ the inclusion.
This, together $\iota_{v_{\text{mod}\,j}|_Z}\tilde{\beta_j}=0$, gives $\tilde{\beta_0}=\tilde{\beta_1}$.
\end{proof}

\begin{remark}
It is possible to give versions of these two relative Moser's theorems using $b$-cohomology (see for instance  \cite{scott}). Those statements require that the $b$-cohomology classes coincide but do not require that ${\omega_0}_{\vert Z}={\omega_1}_{\vert Z}$. However, the diffeomorphisms obtained do not necessarily restrict to the identity on $Z$.

\end{remark}
\subsection{Darboux theorem for $b$-symplectic manifolds}\label{sec:darboux}

As in the symplectic case, the relative Moser theorem can be used to prove a local canonical form result for $b$-symplectic forms, an analogue of the classical Darboux theorem.  A different proof of this result can also be found in Lemma 2.4 in \cite{NestandTsygan}.

\begin{theorem}\label{theorem:Darboux2}\textbf{[$b$-Darboux theorem]}
Let $\omega$ be a $b$-symplectic form on $(M,Z)$ and $p\in Z$. Then we can find a coordinate chart $(\cU,x_1,y_1,\ldots,x_n,y_n)$ centered at $p$ such that on $\cU$ the hypersurface $Z$ is locally defined by $y_1=0$ and
$$\omega=d x_1\wedge\frac{d y_1}{y_1}+\sum_{i=2}^n d x_i\wedge d y_i.$$
\end{theorem}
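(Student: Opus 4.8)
The plan is to deduce the $b$-Darboux theorem from the relative Moser theorem (Theorem~\ref{Moser}) together with the structural information about $\omega|_Z$ already assembled in Remark~\ref{remark}. First I would choose a defining function $f$ for $Z$ near $p$ and, using Proposition~\ref{thetamu}(c), write $\omega = \alpha\wedge\frac{df}{f}+\beta$ with $\alpha,\beta$ closed and $\alpha\wedge\beta^{n-1}\wedge df$ nowhere vanishing. As explained in Remark~\ref{remark}, the closed form $\tilde\beta$ is symplectic on the leaves and $\tilde\alpha$ is closed and nonvanishing with $\tilde\alpha\wedge\tilde\beta^{n-1}\neq 0$, so a Darboux-type argument for $\tilde\beta$ produces local coordinates $x_1,x_2,y_2,\ldots,x_n,y_n$ on $Z$ in which $\tilde\beta=\sum_{i=2}^n dx_i\wedge dy_i$ and $\tilde\alpha = dx_1$. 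Setting $y_1=f$, this gives the model form $\omega_0 := dx_1\wedge\frac{dy_1}{y_1}+\sum_{i=2}^n dx_i\wedge dy_i$ on a neighborhood $\cU$ of $p$, and by construction $\omega_0|_Z = \omega|_Z$ (both equal the expression in~(\ref{eq:darbouxz})).

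The second step is then purely formal: $\omega_0$ and $\omega$ are two $b$-symplectic forms on $(\cU,Z\cap\cU)$ with the same restriction to $Z$, so Theorem~\ref{Moser} furnishes a diffeomorphism $\gamma$ between neighborhoods of $p$ fixing $Z$ pointwise with $\gamma^*\omega = \omega_0$. Pulling back the coordinate functions $x_i,y_i$ by $\gamma$ (equivalently, composing the chart with $\gamma$) yields the desired Darboux chart for $\omega$, in which $Z$ is still cut out by $y_1=0$ since $\gamma$ preserves $Z$.

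I expect the main obstacle to be the first step — producing the coordinates on $Z$ that simultaneously normalize $\tilde\beta$ and $\tilde\alpha$, and checking carefully that the resulting model $\omega_0$ really does agree with $\omega$ on $Z$ as a section of $\Lambda^2(\,^bT^*M)|_Z$ (not merely after pullback to leaves). One must be slightly careful that the freedom in the splitting of $\omega$ (the ambiguity of $\alpha,\beta$ by terms $h\,df$ and $\mu\wedge df$, cf.\ the remarks after Proposition~\ref{thetamu}) does not affect $\omega|_Z$; this is exactly the content of Proposition~\ref{prop:unique}, so $\tilde\alpha,\tilde\beta$ and hence $\omega|_Z$ are well defined. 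The existence of coordinates with $\tilde\alpha=dx_1$ follows because $\tilde\alpha$ is closed, hence locally $\tilde\alpha = dg$ for some function $g$ with $dg\wedge\tilde\beta^{n-1}\neq 0$; this nondegeneracy lets us complete $g=x_1$ to a coordinate system adapted to $\tilde\beta$ via the standard relative Darboux argument on the leaf through $p$. Once that is in hand, everything else reduces to invoking Theorem~\ref{Moser}, which does the analytic work.
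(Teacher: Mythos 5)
Your proposal is correct and follows essentially the same route as the paper: normalize $\omega|_Z$ using Proposition~\ref{thetamu} and Remark~\ref{remark} (Darboux for $\tilde\beta$ on the leaves together with $\tilde\alpha=dx_1$ and $y_1=f$), and then apply the relative Moser theorem (Theorem~\ref{Moser}) to the model form and $\omega$. The additional care you take about the well-definedness of $\omega|_Z$ via Proposition~\ref{prop:unique} is exactly the point the paper relies on, so there is nothing to add.
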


\begin{proof}
Let $\omega=\alpha\wedge \frac{d f}{f}+\beta$, and $\tilde{\alpha}=i^*\alpha$ and $\tilde{\beta}=i^*\beta$, where $i:Z\hookrightarrow M$ is the inclusion. As seen in Proposition \ref{thetamu}, for all $p\in Z$ we have $\tilde{\alpha}_p$ nonvanishing, $\tilde{\alpha}_p\wedge\tilde{\beta}_p\neq0$ and $\tilde{\beta}_p\in\Lambda^2( T_p^*Z)$ of rank $n-1$.
Hence, by Remark \ref{remark} we can assume,

$$\omega|_Z=(d x_1\wedge\frac{d y_1}{y_1}+\sum_{i=2}^n d x_i\wedge d y_i)|_Z.$$
The desired result now follows from Theorem \ref{Moser}.
\end{proof}

\subsection{Global Moser theorem for $b$-symplectic manifolds}

When $M$ is compact, we obtain a global result:

\begin{theorem} \label{globalmoser}\textbf{[$b$-Moser theorem]}
Suppose that $M$ is compact and let $\omega_0$ and $\omega_1$ be two $b$-symplectic forms on $(M,Z)$. Suppose that $\omega_t$, for $0\leq t\leq 1$, is a smooth family of $b$-symplectic forms on $(M,Z)$ joining $\omega_0$ and $\omega_1$ and such that the $b$-cohomology class $[\omega_t]$ does not depend on $t$. Then, there exists a family of diffeomorphisms $\gamma_t:M\to M$, for $0\leq t\leq 1$ such that $\gamma_t$ leaves $Z$ invariant and $\gamma_t^*\omega_t=\omega_0$.
\end{theorem}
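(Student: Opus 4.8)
The plan is to run the classical Moser path argument in the $b$-symplectic category, the global analogue of what was done in Theorem \ref{Moser}. Since $[\omega_t]\in\,^bH^2(M)$ is independent of $t$, the $b$-cohomology class of $\frac{d}{dt}\omega_t$ is zero for each $t$; I would like to upgrade this to a smooth-in-$t$ family of primitives. Concretely, writing $\dot\omega_t=\frac{d}{dt}\omega_t$, each $\dot\omega_t$ is an exact $b$-two-form, so $\dot\omega_t=d\mu_t$ for some $\mu_t\in\,^b\Omega^1(M)$, and by compactness of $M$ (using a fixed finite good cover and a partition of unity, or the standard homotopy operator in $b$-de Rham theory, which exists because the $b$-de Rham complex is a fine resolution) one can choose the primitives $\mu_t$ to depend smoothly on $t$. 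Then define the time-dependent $b$-vector field $v_t$ by
\begin{equation}
\iota_{v_t}\omega_t=-\mu_t,
\end{equation}
which has a unique solution because $\omega_t$ is $b$-symplectic, i.e.\ of maximal rank in $\Lambda^2(\,^bT^*M)$ for every $t$; here I am using the bundle isomorphism $\,^bTM\cong\,^bT^*M$ induced by $\omega_t$. Since $\mu_t$ and $\omega_t$ are genuine $b$-objects, $v_t$ is a $b$-vector field, hence tangent to $Z$ at every point of $Z$.

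Next I would integrate $v_t$. Because $M$ is compact, $v_t$ is complete, so its time-dependent flow $\gamma_t:M\to M$ is defined for all $t\in[0,1]$ with $\gamma_0=\mathrm{id}$. Since $v_t$ is tangent to $Z$, its flow preserves $Z$, giving $\gamma_t(Z)=Z$ as required. Finally, the usual Moser computation: using Cartan's formula for the Lie derivative (which is valid for $b$-forms, since $d$ and $\iota$ are the usual operators on $\,^b\Omega^*(M)$),
\begin{equation}
\frac{d}{dt}(\gamma_t^*\omega_t)=\gamma_t^*\bigl(\mathcal{L}_{v_t}\omega_t+\dot\omega_t\bigr)=\gamma_t^*\bigl(d\iota_{v_t}\omega_t+\iota_{v_t}d\omega_t+d\mu_t\bigr)=\gamma_t^*\bigl(-d\mu_t+0+d\mu_t\bigr)=0,
\end{equation}
using $d\omega_t=0$. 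Hence $\gamma_t^*\omega_t=\gamma_0^*\omega_0=\omega_0$ for all $t$, which is the assertion.

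The one genuine technical point — the part I would be most careful about — is the smooth-in-$t$ choice of $b$-one-form primitives $\mu_t$ for the exact family $\dot\omega_t$. This is where compactness of $M$ is actually used: one wants a chain homotopy operator $h:\,^b\Omega^2(M)\to\,^b\Omega^1(M)$, continuous (linear) in its argument, with $dh+hd=\mathrm{id}-(\text{harmonic/representative projection})$, so that applying $h$ to the $t$-family $\dot\omega_t$ (whose $b$-cohomology class is constantly zero) produces $\mu_t:=h(\dot\omega_t)$ depending smoothly on $t$. Such an operator exists because $\,^b\Omega^*(M)$ is the complex of global sections of a fine sheaf on the compact manifold $M$ and $\,^bH^2(M)$ is finite-dimensional (by Theorem \ref{thm:mazzeomelrose}, since $H^*(M)$ and $H^{*-1}(Z)$ are finite-dimensional when $M$, hence $Z$, is compact); alternatively one can patch local Poincar\'e-lemma primitives with a partition of unity. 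Everything else is the standard Moser trick transplanted verbatim into the $b$-category, with the only structural inputs being the $b$-symplectic nondegeneracy (to invert $\omega_t$) and the fact that $b$-vector fields are automatically tangent to $Z$ (to get $\gamma_t(Z)=Z$).
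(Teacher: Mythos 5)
Your proposal is correct and follows essentially the same route as the paper's proof: constancy of $[\omega_t]$ in $\,^bH^2(M)$ gives primitives $\mu_t$ of $\dot\omega_t$, nondegeneracy of $\omega_t$ as a $b$-form produces a $b$-vector field $v_t$, which is automatically tangent to $Z$, and integrating it on the compact $M$ yields the desired isotopy. Your extra attention to choosing the primitives $\mu_t$ smoothly in $t$ (via a homotopy operator) and your sign convention $\iota_{v_t}\omega_t=-\mu_t$ are harmless refinements of the argument the paper states more briefly.
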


\begin{proof}
As in the proof of Theorem \ref{Moser}, the existence of the desired isotopy $\gamma_t$ relies on the existence of a smooth family of $b$-vector fields $v_t$ such that
\begin{equation}\label{moremosertrick}
\cL_{v_t}\omega_t=\frac{d\omega_t}{dt}.
\end{equation}
Integrating this family of $b$-vector fields will then yield a suitable $\gamma_t$.

Because  $[\omega_t]$ is independent of $t$, we have $[\frac{d\omega_t}{dt}]=\frac{d[\omega_t]}{dt}=0$, and so there exists a family $\mu_t\in\,^b\Omega^1(M)$ such that $\frac{d\omega_t}{dt}=d \mu_t$. Equation (\ref{moremosertrick}) becomes
$$\iota_{v_t}\omega_t=\mu_t$$

\noindent where $\omega_t$ is a symplectic $b$-form, and therefore defines an isomorphism between
$b$-forms and $b$-vector fields. Therefore, the vector field $v_t$ defined by the equation above is
a $b$-vector field and hence tangent to $Z$. The flow integrating $v_t$, $\gamma_t$ gives the desired diffeomorphism $\gamma_t:M\to M$, leaving $Z$ invariant (since $v_t$ is tangent to $Z$) and $\gamma_t^*\omega_t=\omega_0$.
\end{proof}

\begin{remark}
Using the isomorphism of Theorem \ref{thm:poissoncohomology}, one can rewrite the Moser theorems in this section in the language of Poisson cohomology. We chose not to include it here because we find that this language does not simplify statements or proofs.

Another possible restatement can be obtained combining Theorems \ref{globalmoser} and \ref{thm:mazzeomelrose} ($b$-Mazzeo-Melrose) to replace the condition on $[\omega_t]$ by conditions on the corresponding families of elements of $H^2(M)$ and $H^1(Z)$.
\end{remark}

\subsection{Revisiting Radko's classification theorem}

In the classical symplectic world, the Moser theorem applied to the two-dimensional case tells us that surfaces with the same symplectic volume must be symplectomorphic. The $b$-version of the Moser theorem, Theorem \ref{globalmoser} yields as its two-dimensional instance the classification of Radko surfaces.

Let $M$ be a compact two-dimensional manifold and $\Pi_0$ and $\Pi_1$ be two Radko surface structures on it which vanish along the same set of curves $\gamma_1,\ldots,\gamma_n$, induce the same modular periods along these curves and have the same regularized Liouville volume. Let us prove, as a corollary of Theorem \ref{globalmoser} that the respective dual $b$-symplectic forms $\omega_0$ and $\omega_1$ on $(M,Z=\bigcup_{i=1}^n \gamma_i)$ are $b$-symplectomorphic.

For $j=0,1$, write $\omega_j=\alpha_j\wedge\frac{d f_j}{f_j}+\beta_j$. We can assume that $f_0$ and $f_1$ have the same sign, since $\frac{d(-f_i)}{-f_i}=\frac{df_i}{f_i}$, and then using a diffeomorphism, that $f_0=f_1=f$. Also, since $M$ is 2-dimensional, $\beta_j$ must be of the form $\beta_j=\mu_j\wedge df$, for some one-form $\mu_j$, and so by renaming $\alpha_j+f\mu_j$ to $\alpha_j$, which note doesn't change its restriction to $Z$, we can write
$$\omega_j=\alpha_j\wedge\frac{df}{f}.$$
Let $\omega_t= (1-t)\omega_0 + t\omega_1=\alpha_t\wedge\frac{df}{f}$, where $\alpha_t=(1-t)\alpha_0+t\alpha_1$. We want to show that the $b$-cohomology class $[\omega_t]$ are both independent of $t$, and that for all $0\leq t\leq 1$, the form $\omega_t$ is $b$-symplectic.

The modular vector field is unique up to adding hamiltonian vector fields, which are tangent to the symplectic leaves. In dimension two, because the leaves contained in $Z$ are points, the modular vector field is unique. Furthermore, a vector field on a closed simple curve is uniquely determined (up to diffeomorphism) by its period, and so the periods of the modular vector field along the curves $\gamma_1,\ldots,\gamma_n$ completely determine the restriction of the modular vector field to $Z$. Thus, $v_{\text{mod}\, 0}|_Z=v_{\text{mod} \,1}|_Z$. The pullback $\tilde{\alpha_j}=i^*\alpha_j$ is characterized by $\tilde{\alpha_j}(v_{\text{mod}\, j}|_Z)=1$, so we have $\tilde{\alpha_0}=\tilde{\alpha_1}$, and thus $\omega_0|_Z=\omega_1|_Z$.
By Proposition \ref{honest}, the two-form $(\omega_0-\omega_1)$ is an honest de Rham form. The regularized Liouville volumes associated with $\omega_0$ and $\omega_1$ being equal means that
$$\int_M (\omega_0-\omega_1)=0$$
both as a $b$-integral (see Remark \ref{integrate}) and -- since it is an honest de Rham form that we are integrating -- as an honest integral. Thus, the (honest de Rham) cohomology class of $[\omega_0-\omega_1]$ is zero, and so is its $b$-cohomology class. Therefore, $[\omega_t]=[\omega_0]=[\omega_1]\in\, ^bH^2(M)$.

Lastly, we must show that $\omega_t$ is a $b$-symplectic form for all $t$. Away from $Z$, the forms $\omega_0$ and $\omega_1$ are honest area forms inducing the same orientation (the orientation is induced by the modular vector field's restriction to $Z$), and on  $Z$ the form $\tilde{\alpha_t}=\tilde{\alpha_0}=\tilde{\alpha_1}$ does not vanish, so $\omega_t=\alpha_t\wedge\frac{df}{f}$ is nondegenerate and hence $b$-symplectic.

\section{Invariants associated to the exceptional hypersurface of a $b$-symplectic manifold}\label{section:invariantsassociatedtoabpoissonstructure}

Given a $b$-symplectic structure, the exceptional hypersurface is endowed with a Poisson structure which is regular of corank one.
In this section we study and we completely characterize the foliation invariants of the codimension one symplectic foliation on $Z$.
These invariants of $Z$  were introduced in \cite{guimipi} in the context of codimension one symplectic foliation. The definition of these invariants does not require the existence of a $b$-symplectic structure on $(M,Z)$ or even that of a manifold $M$ of which $Z$ is a hypersurface. The first invariant requires a transversally orientable codimension-one regular foliation on $Z$, the second one a Poisson structure on $Z$ which induces a symplectic foliation with those characteristics. When $Z$ is the exceptional hypersurface of a $b$-symplectic manifold $(M,Z)$, we get for free such a foliation and Poisson structure on $Z$.

\subsection{The defining one-form of a foliation and the first obstruction class}

Let $Z$ be an odd-dimensional manifold, and $\mathcal{F}$ a transversally orientable codimension-one foliation on $Z$.

\begin{definition}
A form $\alpha\in\Omega^1(Z)$ is a \textbf{defining one-form} of the foliation $\mathcal{F}$ if it is nowhere vanishing and $i^*_L\alpha=0$ for all leaves $L\stackrel{i_L}{\hookrightarrow}Z$.
\end{definition}

In particular, when the foliation $\mathcal{F}$ on $Z$ is the symplectic foliation induced on $Z$ by a $b$-symplectic structure on $(M,Z)$, a defining one-form can be chosen such that $\alpha(v_{mod}|_Z)=1$. With this extra condition, the defining one-form is unique even when we consider a different volume form on $M$: this causes the modular vector field $v_{mod}$ to change by a Hamiltonian vector field, which is tangent to the leaves of $\mathcal{F}$. Also in this particular case, the modular one-form $\alpha$ will necessarily be closed:

\begin{proposition}
If $(M,Z)$ is a $b$-symplectic manifold then a defining one-form $\alpha$ of the symplectic foliation on $Z$  that satisfies $\alpha(v_{mod}|_Z)=1$ is necessarily closed.
\end{proposition}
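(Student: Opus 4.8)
The plan is to extract the one-form $\alpha$ from a local $b$-symplectic model and show directly that it is closed. First I would recall from Proposition \ref{thetamu} that, after fixing a defining function $f$ for $Z$ and writing $\omega = \alpha_M \wedge \frac{df}{f} + \beta$ on a neighborhood of $Z$, one can assume $\alpha_M$ and $\beta$ are closed, so $\tilde\alpha := i^*\alpha_M$ is automatically closed on $Z$. The content of the proposition is that the \emph{particular} defining one-form $\alpha$ normalized by $\alpha(v_{\mathrm{mod}}|_Z)=1$ coincides with such a closed $\tilde\alpha$; since any two defining one-forms of $\mathcal F$ differ by multiplication by a nowhere-vanishing function $g \in C^\infty(Z)$, it suffices to check that this normalization pins down $g$ to be the one giving the closed representative.

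The cleanest route is local-to-global via a partition of unity, but I would first do the computation in the Darboux coordinates of Theorem \ref{theorem:Darboux2} (equivalently Remark \ref{remark}): near $p \in Z$ there are coordinates $x_1,y_1,\dots,x_n,y_n$ with $Z = \{y_1=0\}$ and $\omega = dx_1 \wedge \frac{dy_1}{y_1} + \sum_{i\geq 2} dx_i\wedge dy_i$. With $\Omega = dx_1\wedge\cdots\wedge dy_n$ the modular vector field was computed in Section \ref{subsection:modularvectorfield} to be $v_{\mathrm{mod}} = \frac{\partial}{\partial x_1}$ (written $\frac{\partial}{\partial t}$ there; here $x_1$ plays the role of $t$), so on $Z$ we have $v_{\mathrm{mod}}|_Z = \frac{\partial}{\partial x_1}$. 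The defining one-form of the foliation here is $\tilde\alpha = dx_1$, which visibly satisfies $dx_1\big(\tfrac{\partial}{\partial x_1}\big) = 1$ and is closed. So in any Darboux chart the normalized defining one-form equals $dx_1$, hence is closed on that chart.

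To globalize: let $\alpha$ be the unique defining one-form of $\mathcal F$ on $Z$ with $\alpha(v_{\mathrm{mod}}|_Z) = 1$ — uniqueness, and independence of the choice of volume form on $M$, is exactly the statement already recorded in the paragraph preceding the proposition (a change of volume form alters $v_{\mathrm{mod}}$ by a Hamiltonian vector field, which is tangent to the leaves of $\mathcal F$, hence does not affect the value of any defining one-form on it; and by Remark \ref{rem:remark}, $\tilde\alpha(v_{\mathrm{mod}}|_Z) = 1$ for the $\tilde\alpha$ coming from any splitting). By the local computation, in a Darboux chart around each point of $Z$ this $\alpha$ agrees with the coordinate function $dx_1$ and is therefore closed. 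Since being closed is a local condition, $\alpha$ is closed on all of $Z$. Alternatively, and without invoking the $b$-Darboux theorem, I could argue: $\tilde\alpha := i^*\alpha_M$ from the closed-representative splitting of Proposition \ref{thetamu}(c) is a nowhere-vanishing closed one-form annihilating all leaves, so it is a defining one-form; by Remark \ref{rem:remark} it already satisfies $\tilde\alpha(v_{\mathrm{mod}}|_Z) = 1$; by the uniqueness just recalled, $\alpha = \tilde\alpha$, which is closed.

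The only genuine subtlety — the step I would be most careful about — is the \emph{uniqueness} of the normalized defining one-form, because it rests on the fact that the ambiguity in $v_{\mathrm{mod}}|_Z$ (Hamiltonian vector fields) and the ambiguity in $\tilde\alpha$ from the splitting (summands $h\,df$ and $\mu\wedge df$, cf.\ the Remark after Proposition \ref{thetamu}) are both invisible to the pairing on $Z$. Once that is granted, closedness is essentially immediate from any of the local models above. I would present the second, splitting-based argument as the main proof since it is shortest and does not depend on having already proved the $b$-Darboux theorem.
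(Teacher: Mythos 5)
Your proposal is correct, but it takes a genuinely different route from the paper's. The paper argues directly on $Z$: since the flow of $v_{\mathrm{mod}}$ preserves the symplectic foliation and the normalization, one has $\mathcal{L}_{v_{\mathrm{mod}}|_Z}\alpha=0$; Cartan's formula then yields $\iota_{v_{\mathrm{mod}}|_Z}d\alpha=0$, and the pointwise splitting $T_pZ=\mathrm{span}(v_{\mathrm{mod}}|_Z)\oplus T_pL$ together with $i_L^*d\alpha=d\,i_L^*\alpha=0$ kills the remaining components of $d\alpha$. You instead prove uniqueness of the normalized defining one-form (any two defining one-forms differ by a nowhere-vanishing factor, which the pairing with $v_{\mathrm{mod}}|_Z$ forces to be $1$, and the ambiguity in $v_{\mathrm{mod}}|_Z$ is Hamiltonian, hence tangent to the leaves and invisible to the pairing) and then identify it with $\tilde\alpha=i^*\alpha$ coming from the splitting of $\omega$, which is closed by Proposition \ref{thetamu} and pairs to $1$ with any modular vector field by Remark \ref{rem:remark}. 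Both arguments are sound; yours buys a slightly stronger conclusion (the normalized defining one-form is exactly $\tilde\alpha$, hence unique and intrinsically determined by $\omega$), while the paper's Lie-derivative argument stays entirely on $Z$, needs no identification with $\tilde\alpha$, and serves as the template reused verbatim for the defining two-form in the following proposition. Two minor remarks: closedness of $\tilde\alpha$ does not require the improved splitting of Proposition \ref{thetamu}(c) --- it is proved for an arbitrary splitting in the first paragraph of that proof --- and your Darboux-chart verification is redundant once the splitting-based argument is in place (though it is not circular, since Theorem \ref{theorem:Darboux2} does not rely on this proposition).
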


\begin{proof}

The flow of $v_{mod}$ preserves the foliation $\mathcal{F}$, so it also preserves  corresponding
defining one-form $\alpha$: $\cL_{v_{mod}|_Z}\alpha=0$. Applying Cartan's formula we have
$$0=\cL_{v_{mod}|_Z}\alpha=d\iota_{v_{mod}|_Z}\alpha+\iota_{v_{mod}|_Z} d\alpha.$$
The first summand vanishes because $\alpha({v_{mod}|_Z})=1$, and so the second summand vanishes as well: $(d\alpha)({v_{mod}|_Z},-)=0$.

Since for a point $p\in Z$ we have the decomposition
$$T_p Z=\text{span}({v_{mod}|_Z})\oplus T_p L,$$
where $L$ is the corank 2 symplectic leaf through $p$, it only remains to check that $d\alpha(v_1,v_2)=0$ for $v_1,v_2 \in T_p L.$ But $di_L^*\alpha=i_L^*d\alpha$ and therefore $d\alpha(v_1,v_2)=0$.
\end{proof}

\begin{remark}
Alternatively, observe that the defining one-form $\tilde{\alpha}$  defined in Proposition \ref{thetamu} satisfies the condition $\tilde{\alpha}(v_{mod}|_Z)=1$  as we saw in Remark \ref{rem:remark}. Also observe that $\tilde{\alpha}$ is closed because of Proposition \ref{thetamu}.
\end{remark}

We follow \cite{guimipi} for the definition the first obstruction class, an invariant related to the defining one-form of a foliation $\mathcal{F}$ on $Z$. Consider the short exact sequence of complexes
$$0\longrightarrow\alpha\wedge\Omega(Z)\stackrel{i}{\longrightarrow}\Omega(Z)\stackrel{j}{\longrightarrow}\Omega(Z)/\alpha\wedge\Omega(Z)\longrightarrow0.$$
Because $i^*_L d\alpha=0$ for all $L\in\mathcal{F}$ and $\alpha$ is a defining one-form of the foliation $\mathcal{F}$, we have
$$d\alpha=\beta\wedge\alpha\,\,\text{for some}\,\,\beta\in\Omega^1(Z).$$
Furthermore, because $0=d(d\alpha)=d\beta\wedge\alpha-\beta\wedge\beta\wedge\alpha=d\beta\wedge\alpha,$ the two-form $d\beta$ is in the subcomplex $\alpha\wedge\Omega(Z)$, and so $d(j\beta)=0$.

\begin{definition}
The \textbf{first obstruction class} of the foliation $\mathcal{F}$ is $$c_\mathcal{F}=\left[j\beta\right]\in H^1(\Omega(Z)/\alpha\wedge\Omega(Z)).$$
\end{definition}

Note that since a different defining one-form will be $\alpha'=f\alpha$ with $f$ a nonvanishing function, the complexes $\alpha\wedge\Omega(Z)$ and $\Omega(Z)/\alpha\wedge\Omega(Z)$ do not depend on the choice of $\alpha$. Furthermore, one can prove that the class $c_\mathcal{F}$ is independent of the choice of $\alpha$ and that:

\begin{theorem}\cite{guimipi}
The first obstruction class vanishes, $c_\mathcal{F}=0$, if and only if one can choose the defining one-form $\alpha$ of $\mathcal{F}$ to be closed.
\end{theorem}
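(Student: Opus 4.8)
The plan is to prove both implications. The easy direction is clear: if $\alpha$ can be chosen closed, then $d\alpha = 0$, so in the equation $d\alpha = \beta\wedge\alpha$ we may take $\beta = 0$, whence $c_{\mathcal F} = [j\beta] = 0$. The content is in the converse, so the bulk of the argument addresses that.

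Suppose $c_{\mathcal F} = 0$. Then $[j\beta] = 0$ in $H^1(\Omega(Z)/\alpha\wedge\Omega(Z))$, meaning there exists $g\in\Omega^0(Z) = C^\infty(Z)$ with $j\beta = d(jg) = j(dg)$, i.e.\ $\beta - dg \in \alpha\wedge\Omega(Z)$. Since $\alpha\wedge\Omega^0(Z) = \{h\alpha : h\in C^\infty(Z)\}$, this says $\beta = dg + h\alpha$ for some $h\in C^\infty(Z)$. I would then substitute back into $d\alpha = \beta\wedge\alpha = (dg + h\alpha)\wedge\alpha = dg\wedge\alpha$. The goal is to absorb the factor involving $g$ into a rescaling of $\alpha$: set $\alpha' = e^{-g}\alpha$, which is still nowhere vanishing and still annihilates $TL$ for every leaf $L$, hence is again a defining one-form of $\mathcal F$. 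Computing, $d\alpha' = -e^{-g}dg\wedge\alpha + e^{-g}d\alpha = e^{-g}(-dg\wedge\alpha + dg\wedge\alpha) = 0$. Thus $\alpha'$ is a closed defining one-form, as desired.

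The one point requiring care — and the main obstacle — is the identification of what $[j\beta] = 0$ concretely means, i.e.\ correctly unwinding the degree-zero part of the quotient complex $\Omega(Z)/\alpha\wedge\Omega(Z)$ and the induced differential $j$, to extract the relation $\beta = dg + h\alpha$. Once that algebraic bookkeeping is done, the integrating-factor trick $\alpha\mapsto e^{-g}\alpha$ finishes the proof immediately; one should also note in passing that this $\alpha'$ is well defined globally on $Z$ since $g$ is a globally defined smooth function. Finally, one should remark (or cite \cite{guimipi}) that $c_{\mathcal F}$ is independent of the choice of defining one-form, so the statement is intrinsic to $\mathcal F$: replacing $\alpha$ by $f\alpha$ with $f$ nowhere zero changes $\beta$ by $d(\log|f|)$, which is exact and hence does not affect the class $[j\beta]$.
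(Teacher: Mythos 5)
Your proof is correct. The paper itself does not reprove this statement (it is quoted from \cite{guimipi}), and your argument --- identifying the degree-zero part of the quotient complex so that $[j\beta]=0$ unwinds to $\beta = dg + h\alpha$, and then rescaling by the integrating factor $\alpha' = e^{-g}\alpha$ to kill $d\alpha' $ --- is precisely the standard proof given in that reference, including the remark that $c_{\mathcal F}$ does not depend on the choice of defining one-form.
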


As proved in  \cite{guimipi}, manifolds with vanishing first obstruction class are unimodular (vanishing modular class). In particular, the first obstruction class of the foliation induced on $Z$ by a $b$-symplectic structure on $(M,Z)$ vanishes.

\subsection{The defining two-form of a foliation and the second obstruction class}

Now assume that $Z$ is endowed with a regular corank one Poisson structure $\Pi$, that $\mathcal{F}$ is the corresponding foliation of $Z$ by symplectic leaves and that $c_\mathcal{F}=0$. Fix a closed defining one-form $\alpha$.

\begin{definition}
A form $\omega\in\Omega^2(Z)$ is a \textbf{defining two-form} of the foliation $\mathcal{F}$ induced by the Poisson structure $\Pi$ if $i^*_L\omega$ is the symplectic form induced by $\Pi$ on each leaf $L\stackrel{i_L}{\hookrightarrow}Z$.
\end{definition}

When the Poisson structure $\Pi$ on $Z$ is induced by a $b$-symplectic structure on $(M,Z)$, a defining two-form can be chosen such that $\iota_{v_{mod}|_Z}\omega=0$.

\begin{proposition}
If $(M,Z)$ is a $b$-symplectic manifold then a defining two-form $\omega$ of the symplectic foliation $\mathcal{F}$ on $Z$  that satisfies $\iota_{v_{mod}|_Z}\omega=0$ is necessarily closed.
\end{proposition}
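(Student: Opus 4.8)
The plan is to mimic the argument for the defining one-form (the preceding Proposition): first show that the flow of the modular vector field preserves $\omega$, so that $\cL_{v_{mod}|_Z}\omega=0$, and then extract $d\omega=0$ from Cartan's formula together with a short linear-algebra observation. Throughout write $v:=v_{mod}|_Z$; by the Proposition in Section \ref{subsection:modularvectorfield} this is a vector field on $Z$, tangent to $Z$ and everywhere transverse to $\mathcal{F}$, and by Remark \ref{rem:remark} a closed defining one-form $\tilde\alpha$ for $\mathcal{F}$ satisfies $\tilde\alpha(v)=1$.

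The key input is that a modular vector field is a Poisson vector field (Weinstein, \cite{Weinstein2}): the flow $\phi_t$ of $v$ preserves $\Pi_Z$, hence permutes the leaves of $\mathcal{F}$ and intertwines the leafwise symplectic forms. Consequently $\phi_t^*\omega$ is again a defining two-form of $\mathcal{F}$: for a leaf $L$, $i_L^*\phi_t^*\omega=(\phi_t|_L)^*\,i_{\phi_t(L)}^*\omega$ is the symplectic form on $L$. Moreover $v$ is invariant under its own flow, so $\iota_v(\phi_t^*\omega)=\phi_t^*(\iota_v\omega)=0$. Thus $\phi_t^*\omega$ satisfies the same two conditions (defining two-form, killed by $v$) as $\omega$.

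Next I would record the following elementary fact: a $k$-form $\eta$ on $Z$ ($k\geq 1$) with $i_L^*\eta=0$ for every leaf $L$ and $\iota_v\eta=0$ vanishes. Indeed, using the pointwise splitting $T_p Z=\langle v_p\rangle\oplus T_pL$ (equivalently $T_p^*Z=\langle\tilde\alpha_p\rangle\oplus (T_pL)^*$), the condition $i_L^*\eta=0$ forces $\eta=\tilde\alpha\wedge\lambda$ for some $(k-1)$-form $\lambda$, and then $0=\iota_v(\tilde\alpha\wedge\lambda)=\lambda-\tilde\alpha\wedge\iota_v\lambda$ forces $\lambda$ to lie in the ideal generated by $\tilde\alpha$, whence $\eta=\tilde\alpha\wedge\lambda=0$. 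Applying this with $k=2$ to $\eta=\phi_t^*\omega-\omega$ (which restricts to $0$ on every leaf and is killed by $v$) gives $\phi_t^*\omega=\omega$ for all $t$, i.e.\ $\cL_v\omega=0$.

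Finally, Cartan's formula yields $0=\cL_v\omega=d\iota_v\omega+\iota_v d\omega=\iota_v d\omega$, using $\iota_v\omega=0$. Also $i_L^*(d\omega)=d(i_L^*\omega)=0$ since the leafwise symplectic form is closed. Applying the elementary fact once more, now with $k=3$ to $\eta=d\omega$, gives $d\omega=0$. The only nontrivial ingredient is the invariance step (that the modular flow preserves the leafwise symplectic data, and the resulting "uniqueness" of the normalized defining two-form); everything after that is formal. I expect no real obstacle beyond being careful that the linear-algebra lemma is applied in the correct degrees.
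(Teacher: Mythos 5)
Your proposal is correct and takes essentially the same route as the paper's proof: invariance of $\omega$ under the modular flow gives $\cL_{v_{mod}|_Z}\omega=0$, Cartan's formula then gives $\iota_{v_{mod}|_Z}d\omega=0$, and the pointwise splitting $T_pZ=\mathrm{span}(v_{mod}|_Z)\oplus T_pL$ together with closedness of the leafwise symplectic forms yields $d\omega=0$. The only difference is that you justify the invariance step $\cL_{v_{mod}|_Z}\omega=0$ in detail (via the uniqueness of a defining two-form annihilated by $v_{mod}|_Z$), a point the paper asserts directly from the fact that the modular flow preserves $\Pi$.
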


\begin{proof}
The flow of $v_{mod}$ preserves the Poisson structure $\Pi$, so it also preserves the corresponding  defining two-form $\omega$: $\cL_{v_{mod}|_Z}\omega=0$. Applying Cartan's formula we have
$$0=\cL_{v_{mod}|_Z}\omega=d\iota_{v_{mod}|_Z}\omega+\iota_{v_{mod}|_Z} d\omega.$$
The first summand vanishes by hypothesis, and so the second summand vanishes as well: $\iota_{v_{mod}|_Z} d\omega=0$.

Since for $p\in Z$ and $L$ the corank 2 symplectic leaf through $p$ we have the decomposition
$$T_p Z=\text{span}({v_{mod}|_Z})\oplus T_p L,$$
it only remains to check that $d\omega(v_1,v_2,v_3)=0$ for $v_1, v_2, v_3 \in T_p L.$ But $i_L^*\omega$ is a symplectic form on $L$, hence closed, and therefore $d\omega(v_1,v_2,v_3)=0$.
\end{proof}

Considering a different volume form on $M$ changes the modular vector field from $v_\text{mod}$ to $v_\text{mod}'=v_\text{mod}+u_f$, where $u_g$ is the hamiltonian vector field of the function $g$, and the defining two-form from $\omega$ to $\omega'=\omega+dg\wedge\alpha=\omega+d(g\wedge\alpha)$, thus not changing the cohomology class $[\omega']=[\omega]\in H^2(Z)$.

We follow \cite{guimipi} for the definition of the second obstruction class, an invariant related to the defining two-form of the symplectic foliation $\mathcal{F}$ induced by a regular corank one Poisson structure $\Pi$ on $Z$. Because $i^*_L d\omega=d(i^*_L\omega)=0$ for all $L\in\mathcal{F}$ and $\alpha$ is a defining one-form of the foliation $\mathcal{F}$, we have $$d\omega=\mu\wedge\alpha\,\,\text{for some}\,\,\mu\in\Omega^2(Z).$$
Furthermore, because $0=d(d\omega)=d\mu\wedge\alpha-\mu\wedge d\alpha$ and $\alpha$ is closed, the 3-form $d\mu$ is in the subcomplex $\alpha\wedge\Omega(Z)$, and so $d(j\mu)=0$.

\begin{definition}
The \textbf{second obstruction class} of the foliation $\mathcal{F}$ is $$\sigma_\mathcal{F}=\left[j\mu\right]\in H^2(\Omega(Z)/\alpha\wedge\Omega(Z)).$$
\end{definition}

\noindent One can prove that the class $\sigma_\mathcal{F}$ is independent of the choice of $\omega$.

\begin{theorem}\cite{guimipi}
The second obstruction class vanishes, $\sigma_\mathcal{F}=0$, if and only if one can choose the defining two-form $\omega$ of $\mathcal{F}$ to be closed.
\end{theorem}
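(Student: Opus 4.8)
The plan is to mimic the structure of the proof of the first obstruction class theorem, which is the natural template here. One direction is immediate: if $\omega$ can be chosen closed, then $d\omega=0=\mu\wedge\alpha$, and we may take $\mu=0$, so $j\mu=0$ in the quotient complex and $\sigma_\mathcal{F}=0$. The content is in the converse, so assume $\sigma_\mathcal{F}=0$ and exhibit a closed defining two-form.

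The converse argument should run as follows. Fix any defining two-form $\omega$ and write $d\omega=\mu\wedge\alpha$ as in the definition. The hypothesis $\sigma_\mathcal{F}=[j\mu]=0$ in $H^2(\Omega(Z)/\alpha\wedge\Omega(Z))$ means that $j\mu = j(d\nu)$ for some $\nu\in\Omega^1(Z)$; equivalently, $\mu-d\nu$ lies in the subcomplex $\alpha\wedge\Omega(Z)$, i.e.\ $\mu-d\nu = \gamma\wedge\alpha$ for some $\gamma\in\Omega^1(Z)$ (I should be careful here with a possible sign, but the structure is the same). Now consider the modified two-form $\omega' = \omega - d\nu\wedge\alpha$. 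Since $\alpha$ is a defining one-form, $i_L^*\alpha=0$ on every leaf $L$, so $i_L^*(d\nu\wedge\alpha)=0$, and hence $i_L^*\omega'=i_L^*\omega$ is still the leafwise symplectic form; thus $\omega'$ is again a defining two-form. The key point is to compute $d\omega'$: using $d\alpha=\beta\wedge\alpha$ (from $c_\mathcal{F}=0$ we may in fact take $\alpha$ closed, so $d\alpha=0$), we get
$$d\omega' = d\omega - d(d\nu\wedge\alpha) = \mu\wedge\alpha + d\nu\wedge d\alpha = \mu\wedge\alpha = (d\nu + \gamma\wedge\alpha)\wedge\alpha = d\nu\wedge\alpha.$$
This is not yet zero, which tells me a single correction is not enough and I need to iterate or correct more cleverly: I should instead look for the correcting one-form directly. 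The cleaner route: write the exactness $[j\mu]=0$ as $\mu = d\nu + \eta\wedge\alpha$ with $\eta\in\Omega^1(Z)$, and set $\omega' = \omega - \nu\wedge\alpha$ (not $d\nu\wedge\alpha$). Then $d\omega' = \mu\wedge\alpha - d(\nu\wedge\alpha) = \mu\wedge\alpha - d\nu\wedge\alpha = (\mu-d\nu)\wedge\alpha = \eta\wedge\alpha\wedge\alpha = 0$, using $\alpha$ closed throughout. So $\omega'$ is a closed defining two-form.

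The main obstacle I anticipate is bookkeeping: making sure the quotient-complex identity $[j\mu]=0$ is translated correctly into a statement in $\Omega^*(Z)$ (it gives $\mu\in d\Omega^1(Z) + \alpha\wedge\Omega^1(Z)$, with no extra closedness on the representative), and tracking signs in the Leibniz expansions of $d(\nu\wedge\alpha)$ and $d(d\omega)=0$. I also want to double-check that I may genuinely assume $\alpha$ closed here — this is legitimate since by hypothesis $c_\mathcal{F}=0$ (for a $b$-symplectic $Z$ it always vanishes, and more generally the theorem is stated after assuming $c_\mathcal{F}=0$ and fixing a closed defining one-form $\alpha$), so $d\alpha=0$ and the cross terms $d\nu\wedge d\alpha$, $\nu\wedge d\alpha$ all drop out, which is what makes the one-step correction work. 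With that in hand the proof is short; the only real work is the two displayed computations and verifying $\omega'$ is still a defining two-form, which is immediate from $i_L^*\alpha=0$.
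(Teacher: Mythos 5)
Your final argument is correct: with $\alpha$ closed (legitimate, since the second obstruction class is only defined after assuming $c_\mathcal{F}=0$ and fixing a closed defining one-form), writing $\mu=d\nu+\eta\wedge\alpha$ and replacing $\omega$ by $\omega'=\omega-\nu\wedge\alpha$ gives $d\omega'=(\mu-d\nu)\wedge\alpha=\eta\wedge\alpha\wedge\alpha=0$ while $i_L^*\omega'=i_L^*\omega$, which is precisely the standard one-step correction argument of the cited reference \cite{guimipi}; the paper itself states the theorem without reproducing a proof, so there is nothing further to compare against. The only implicit ingredient in your easy direction is the independence of $\sigma_\mathcal{F}$ from the choice of defining two-form, which the paper asserts and which is what allows you to compute the class using the closed representative.
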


In particular, the second obstruction class (and also the first) of the foliation induced on $Z$ by a $b$-symplectic structure on $(M,Z)$ vanishes.

This second invariant has also been studied by Gotay \cite{gotay} in the setting of coisotropic
 embeddings and has a nice interpretation for symplectic fiber bundles (see
 \cite{weinsteinetal}).

Finally, we recall a result from \cite{guimipi} that gives a topological description of a compact Poisson manifold whose regular corank one symplectic foliation contains a compact leaf and which has vanishing first and second invariants. Indeed, in that paper we proved that if the invariants $c_\mathcal{F}$ and $\sigma_\mathcal{F}$ vanish, then the exceptional hypersurface is a symplectic mapping torus if one of the leaves is compact \cite[Theorem 19]{guimipi}. In this section we have proved the converse, thus establishing the following theorem:

\begin{theorem}\label{thm: semilocalstructure}
If $Z$ is an oriented compact connected regular Poisson manifold of corank one and $\mathcal{F}$ is its symplectic foliation, then $c_\mathcal{F} = \sigma_\mathcal{F}=0$ if and only if there exists a Poisson vector field transversal to $\mathcal{F}$. If furthermore $\mathcal{F}$ contains a compact leaf $L$, then every leaf of $\mathcal{F}$ is symplectomorphic to $L$, and $Z$ is the total space of a fibration over $\mathbb{S}^1$ and the mapping torus \footnote{The mapping torus of $\phi:L\to L$ is the space $\frac{L\times\left[0,1\right]}{(x,0)\sim(\phi(x),1)}$.} of the symplectomorphism $\phi:L\to L$ given by the holonomy map of the fibration over $\mathbb{S}^1$.
\end{theorem}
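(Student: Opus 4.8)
The plan is to prove the two implications of the first biconditional, and then analyze the mapping-torus structure in the presence of a compact leaf, all in the language already developed in this section.

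First I would recall what has already been established. For any transversally orientable codimension-one foliation $\mathcal{F}$ on $Z$ we have a defining one-form $\alpha$, unique up to multiplication by a nonvanishing function, and the first obstruction class $c_\mathcal{F} \in H^1(\Omega(Z)/\alpha\wedge\Omega(Z))$ which by the quoted theorem from \cite{guimipi} vanishes precisely when $\alpha$ can be chosen closed. Similarly, once $\alpha$ is chosen closed, a corank-one Poisson structure $\Pi$ inducing $\mathcal{F}$ has a defining two-form $\omega$, and the second obstruction class $\sigma_\mathcal{F} \in H^2(\Omega(Z)/\alpha\wedge\Omega(Z))$ vanishes precisely when $\omega$ can be chosen closed. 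So the content of the biconditional is: \emph{$\alpha$ can be chosen closed and $\omega$ can be chosen closed simultaneously if and only if there is a Poisson vector field transverse to $\mathcal{F}$.}

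For the ``if'' direction, suppose $v$ is a Poisson vector field on $(Z,\Pi)$ transverse to $\mathcal{F}$. I would use $v$ to normalize the defining forms: define $\alpha$ to be the unique one-form with $\iota_v\alpha = 1$ and $i_L^*\alpha = 0$ for all leaves $L$, and let $\omega$ be a defining two-form with $\iota_v\omega = 0$ (this is possible because $v$ is transverse to the leaves, so one can split $TZ = \mathrm{span}(v)\oplus T\mathcal{F}$ and extend the leafwise symplectic forms by declaring $\iota_v\omega=0$). Since $v$ is Poisson, its flow preserves $\Pi$ and hence preserves $\mathcal{F}$ and the leafwise symplectic data; therefore $\mathcal{L}_v\alpha = 0$ and $\mathcal{L}_v\omega = 0$. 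The Cartan-formula computations are then exactly those carried out in the two propositions above (the ones asserting that $\alpha$ with $\alpha(v_{mod})=1$ and $\omega$ with $\iota_{v_{mod}}\omega=0$ are closed): from $\mathcal{L}_v\alpha=0$ and $\iota_v\alpha=1$ one gets $\iota_v d\alpha = 0$, and combined with $i_L^* d\alpha = 0$ and the splitting of $T_pZ$ one concludes $d\alpha=0$; the same argument with $\omega$, using that $i_L^*\omega$ is symplectic hence closed, gives $d\omega=0$. Hence $c_\mathcal{F}=0$ and $\sigma_\mathcal{F}=0$.

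For the ``only if'' direction, suppose $c_\mathcal{F} = \sigma_\mathcal{F} = 0$. Pick a closed defining one-form $\alpha$ (possible since $c_\mathcal{F}=0$) and then a closed defining two-form $\omega$ (possible since $\sigma_\mathcal{F}=0$; one must check these two choices are compatible, but the second obstruction class is \emph{defined} relative to a closed $\alpha$, so there is no conflict). Now I want to produce a transverse Poisson vector field. The natural candidate is the vector field $v$ determined by $\iota_v\alpha = 1$ and $\iota_v\omega = 0$: since $\omega$ has leafwise-maximal rank $2n-2$ and its kernel is one-dimensional and transverse to $T\mathcal{F}$ (here I use that $\alpha\wedge\omega^n$ is a volume form on $Z$, which was noted in the cohomology section), the equations $\iota_v\omega=0$, $\iota_v\alpha=1$ have a unique solution $v$, and $v$ is automatically transverse to $\mathcal{F}$ because $\iota_v\alpha=1\neq 0$. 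It remains to check $v$ is Poisson, i.e. $\mathcal{L}_v\Pi = 0$, equivalently that the flow of $v$ preserves $\mathcal{F}$ and the leafwise symplectic forms. For this I would compute $\mathcal{L}_v\alpha = d\iota_v\alpha + \iota_v d\alpha = 0 + 0 = 0$ (using $d\alpha=0$), which shows the flow preserves $\alpha$ up to scale, hence preserves $\mathcal{F}$; and $\mathcal{L}_v\omega = d\iota_v\omega + \iota_v d\omega = 0 + 0 = 0$ (using $d\omega=0$), which shows the flow preserves $\omega$, hence the leafwise symplectic forms $i_L^*\omega$. Together these give $\mathcal{L}_v\Pi = 0$. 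I expect the main obstacle here to be the honest verification that ``flow preserves $\alpha$ (up to scale) and preserves $\omega$'' translates cleanly into ``$\mathcal{L}_v\Pi=0$'' — one has to argue that the Poisson bivector is completely recovered from $\mathcal{F}$ together with the leafwise symplectic forms and that preserving both pieces of data preserves $\Pi$; this is where one should be careful, although it is essentially formal given the corank-one regularity.

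Finally, for the statement about compact leaves: assume in addition that $\mathcal{F}$ has a compact leaf $L$, and we now have a transverse Poisson vector field $v$ (equivalently, a closed defining one-form $\alpha$ with $\iota_v\alpha = 1$). The flow $\phi_s$ of $v$ preserves $\mathcal{F}$, and since $v$ is transverse to $\mathcal{F}$ and $L$ is compact, by a standard argument (Ehresmann-type, using that a vector field transverse to a foliation along a compact leaf has a flow that stays transverse for a uniform time interval) the flow carries $L$ diffeomorphically onto nearby leaves; moreover $\phi_s$ carries the leafwise symplectic form of $L$ to that of $\phi_s(L)$, so all these leaves are symplectomorphic to $L$. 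The function $g = \int \alpha$ along flow lines of $v$, or rather: because $\alpha$ is closed with $\iota_v\alpha=1$ and the leaves are the level sets of a submersion locally defined by integrating $\alpha$, the compact leaf $L$ together with completeness considerations forces $Z$ to fiber over $\mathbb{S}^1 = \mathbb{R}/\Lambda$ (where $\Lambda$ is the group of periods of $\alpha$, which is a nontrivial discrete subgroup of $\mathbb{R}$ since $L$ is compact and transverse). The fibers are the leaves, all symplectomorphic to $L$, and the first return / holonomy map $\phi: L\to L$ of the flow of $v$ is a symplectomorphism; identifying $Z$ with the mapping torus of $\phi$ is then the standard suspension construction. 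This last part reuses essentially the proof of \cite[Theorem 19]{guimipi} run in the reverse direction, and I would simply invoke the mapping-torus / suspension machinery rather than redo it. I expect the compact-leaf part to be the most delicate step to write carefully, because it requires the completeness of the flow of $v$ transverse to $\mathcal{F}$ and the Reeb-stability-flavored argument that all leaves are diffeomorphic (indeed symplectomorphic) to $L$; everything else is Cartan-calculus bookkeeping already rehearsed in the propositions above.
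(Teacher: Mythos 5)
Your proposal is correct and follows essentially the same route as the paper: the implication ``transverse Poisson vector field $\Rightarrow c_\mathcal{F}=\sigma_\mathcal{F}=0$'' is exactly the Cartan-formula argument of the two propositions of this section (run for an arbitrary transverse Poisson vector field in place of $v_{mod}$), and the compact-leaf/mapping-torus statement is delegated to the suspension argument of \cite[Theorem 19]{guimipi}, just as the paper does. The only difference is that you spell out the converse construction of the transverse Poisson field $v$ via $\iota_v\alpha=1$, $\iota_v\omega=0$ from closed defining forms (checking $\mathcal{L}_v\alpha=\mathcal{L}_v\omega=0$ and that this forces $\mathcal{L}_v\Pi=0$), a step the paper simply inherits from \cite{guimipi}; your version of it is correct and self-contained.
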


\section{The extension problem}\label{section:theextensionproblem}

A $b$-symplectic structure on $(M,Z)$ induces on $Z$ a regular corank one Poisson structure and corresponding foliation by symplectic leaves. Conversely, we can ask: when is a manifold $Z$, endowed with a regular corank one Poisson structure $\Pi$ and corresponding symplectic foliation $\mathcal{F}$, the exceptional hypersurface of a $b$-symplectic manifold $(M,Z)$? And to what extent is such an extension unique? We will see that an extension of $Z$ to a $b$-symplectic tubular neighborhood exists exactly when the obstructions classes $c_\mathcal{F}$ and $\sigma_\mathcal{F}$ of $Z$ vanish, and that uniqueness is related to the modular vector class.

 Consider  $M=Z\times(-\varepsilon,\varepsilon)$ a tubular neighborhood of $Z$ and let $p:M\to Z$ be the projection onto the first factor. Because $Z$ is a Poisson submanifold, given a Poisson vector field $X$ on $M$, the vector field $p_*(X)$ is a Poisson vector field. Let us denote by $[p_*]$ the mapping
$[p_*]:H^1_{Poisson}(M)\longrightarrow H^1_{Poisson}(Z).$
Since the modular vector field of a $b$-Poisson structure is tangent to $Z$, the mapping $p_*$ sends the modular vector field (which is a Poisson vector field) at points of $Z$ to itself.
\begin{theorem}\label{thm:extension}
Let $\Pi$ be a regular corank one Poisson structure on a compact manifold $Z$, and $\mathcal{F}$ the induced foliation by symplectic leaves.

Then $c_\mathcal{F}=\sigma_\mathcal{F}=0$ if and only if $Z$ is the exceptional hypersurface of a $b$-symplectic manifold $(M,Z)$ whose $b$-symplectic form induces on $Z$ the Poisson structure $\Pi$.

Furthermore, two such extensions $(M_0,Z)$ and $(M_1,Z)$ are $b$-sym\-plec\-to\-mor\-phic on a tubular neighborhood of $Z$ if and only if the image of their modular   classes under the map  $[p_*]$ is the same.
\end{theorem}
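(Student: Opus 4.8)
The plan is to prove the two assertions of Theorem~\ref{thm:extension} in turn, treating the existence statement first and the uniqueness statement second, since the latter is the ``final statement'' and the one where the real content lies.

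For the existence direction, I would argue as follows. If $(M,Z)$ is a $b$-symplectic manifold inducing $\Pi$ on $Z$, then by the results of Section~\ref{section:invariantsassociatedtoabpoissonstructure} (the two propositions on closedness of the defining one- and two-forms, together with the two theorems of \cite{guimipi}) the forms $\tilde\alpha$ and $\tilde\beta$ attached to $\mathcal F$ can be taken closed, so $c_\mathcal F=\sigma_\mathcal F=0$. Conversely, suppose $c_\mathcal F=\sigma_\mathcal F=0$. Choose a closed defining one-form $\alpha\in\Omega^1(Z)$ for $\mathcal F$ and a closed defining two-form $\beta\in\Omega^2(Z)$ for the symplectic foliation. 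On the tubular neighborhood $M=Z\times(-\varepsilon,\varepsilon)$ with projection $p:M\to Z$ and coordinate $t$ on the interval, set
$$\omega=p^*\alpha\wedge\frac{dt}{t}+p^*\beta.$$
This is a closed $b$-form (both summands are $p$-pullbacks of closed forms), and it is $b$-symplectic near $Z$: at points of $Z$ one has $\omega^n = n\, p^*(\alpha\wedge\beta^{n-1})\wedge\frac{dt}{t}$, which is nowhere vanishing precisely because $\alpha\wedge\beta^{n-1}$ is a volume form on $Z$ (the defining forms of a corank-one symplectic foliation have this property — see \cite{guimipi}). Shrinking $\varepsilon$ if necessary to keep nondegeneracy off $Z$, we obtain a $b$-symplectic structure on $(M,Z)$, and a quick check that $\Pi$ is the induced Poisson structure on $Z$ (using Remark~\ref{remark}) finishes this direction. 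This model is exactly the normal form referred to as equation~\ref{eq:normalform} in the statement of Lemma~\ref{lem:marcut}, so it is worth stating cleanly.

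For the uniqueness direction — the final statement — the strategy is to reduce everything to Theorem~\ref{theorem:relativemoser2} (the relative Moser theorem phrased in terms of modular vector fields). Given two extensions $(M_0,Z)$ and $(M_1,Z)$, after identifying tubular neighborhoods of $Z$ in $M_0$ and $M_1$ with a common $Z\times(-\varepsilon,\varepsilon)$ we get two $b$-symplectic forms $\omega_0,\omega_1$ on $(Z\times(-\varepsilon,\varepsilon),Z)$ both inducing $\Pi$ on $Z$. By Theorem~\ref{theorem:relativemoser2}, these are $b$-symplectomorphic on a neighborhood of $Z$ by a diffeomorphism fixing $Z$ as soon as their modular vector fields $v_{\mathrm{mod}\,0}$ and $v_{\mathrm{mod}\,1}$ differ on $Z$ by a Hamiltonian vector field of $(Z,\Pi)$ — that is, as soon as $[v_{\mathrm{mod}\,0}|_Z]=[v_{\mathrm{mod}\,1}|_Z]$ in $H^1_{\mathrm{Poisson}}(Z)$. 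So the task is to match this condition with the hypothesis ``$[p_*]$ of the modular classes of $M_0$ and $M_1$ agree.'' Here I would use the remark preceding the theorem: since the modular vector field of a $b$-Poisson structure is tangent to $Z$, the map $p_*$ sends (the restriction to a neighborhood of $Z$ of) a modular vector field of $M_j$ to a Poisson vector field on $Z$ that represents $[v_{\mathrm{mod}\,j}|_Z]$; and $[p_*]$ being well defined on Poisson cohomology means $[p_*]$ applied to the modular class of $M_j$ equals $[v_{\mathrm{mod}\,j}|_Z]\in H^1_{\mathrm{Poisson}}(Z)$. Hence ``$[p_*]$ of the modular classes coincide'' is literally the same as ``$[v_{\mathrm{mod}\,0}|_Z]=[v_{\mathrm{mod}\,1}|_Z]$,'' and Theorem~\ref{theorem:relativemoser2} applies. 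For the converse (a $b$-symplectomorphism forces the modular classes to agree), one notes that a $b$-symplectomorphism fixing $Z$ pulls back $\Pi$ to $\Pi$ and carries modular vector fields to modular vector fields, hence preserves the modular class; pushing forward by $p$ then shows the $[p_*]$-images coincide.

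The main obstacle I anticipate is bookkeeping around the map $[p_*]$ and the precise sense in which ``$[p_*]$ of the modular class'' equals ``$[v_{\mathrm{mod}}|_Z]$.'' One must be careful that: (i) the modular class of $M_j$ is only canonically defined once a volume form is fixed, but its image in $H^1_{\mathrm{Poisson}}$ is volume-independent (this is exactly the footnote on modular classes in Section~\ref{subsection:modularvectorfield}); (ii) $p_*$ really does send $M_j$-Poisson vector fields to $Z$-Poisson vector fields and descends to cohomology, which uses that $Z$ is a Poisson submanifold and that $p$ is a Poisson map onto its image — this is asserted in the paragraph before the theorem and should be spelled out; and (iii) the two extensions a priori live on different ambient manifolds, so one first has to invoke the tubular neighborhood theorem to put both $b$-symplectic forms on the same $(Z\times(-\varepsilon,\varepsilon),Z)$ before Moser can even be stated — and one should check this identification can be chosen to restrict to the identity on $Z$, which is standard. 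Once these points are pinned down, the argument is a direct application of Theorem~\ref{theorem:relativemoser2}.
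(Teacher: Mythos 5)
Your proposal is correct and follows essentially the same route as the paper: the existence direction is the same normal-form construction $p^*\alpha\wedge\frac{dt}{t}+p^*\beta$ on $Z\times(-\varepsilon,\varepsilon)$ from closed defining forms, and the uniqueness direction is the same reduction to Theorem \ref{theorem:relativemoser2} after identifying tubular neighborhoods. Your added checks (the computation of $\omega^n$, the identification of $[p_*]$ of the modular class with $[v_{\mathrm{mod}}|_Z]$, and the converse direction) only make explicit points the paper leaves implicit.
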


\begin{proof}
We have seen in Section \ref{section:invariantsassociatedtoabpoissonstructure} that if $Z$ is the exceptional hypersurface of a $b$-symplectic manifold $(M,Z)$, then the obstructions $c_\mathcal{F}$ and $\sigma_\mathcal{F}$ vanish.

Conversely, assume that $c_\mathcal{F}=\sigma_\mathcal{F}=0$ and let $\alpha$ and $\omega$ be closed defining one- and two-forms of the foliation $\mathcal{F}$ on $Z$. Let $M=Z\times(-\varepsilon,\varepsilon)$ and $p:M\to Z$ be the projection onto the first factor. Define, with $t\in(-\varepsilon,\varepsilon)$,
\begin{equation}\label{eq:normalform}\tilde{\omega}=p^*\alpha\wedge\frac{dt}{t}+p^*\omega. \end{equation}
By construction, $\tilde\omega$ is a non-degenerate $b$-form which induces on $Z\times\{0\}$ the given Poisson structure $\Pi$. Furthermore, it is closed because $\alpha$ and $\omega$ are closed, and so $\tilde\omega$ is a $b$-symplectic form.

Now, let $(M_0,Z)$ and $(M_1,Z)$ be two such extensions, with $b$-symplectic forms $\tilde{\omega}_0$ and $\tilde{\omega}_1$ respectively. By choosing small enough tubular neighborhoods of $Z$ in $M_0$ and $M_1$ we can assume that we have two $b$-symplectic structures $\tilde{\omega}_0$ and $\tilde{\omega}_1$ on the same $b$-manifold $(U,Z)$. By hypothesis, these induce on $Z$ the same Poisson structure $\Pi$, and their corresponding modular vector fields $v_{\text{mod}\,0}$ and $v_{\text{mod}\,1}$ (for some choice of volume form) are such that $v_{\text{mod}\,0}|_Z$ differs from  $v_{\text{mod}\,1}|_Z$ by a hamiltonian vector field on $Z$. Then, by Theorem \ref{theorem:relativemoser2}, the two structures $\tilde\omega_0$ and $\tilde\omega_1$ are $b$-symplectomorphic in a possibly smaller tubular neighborhood of $Z$.
\end{proof}

\begin{example}
Let $Z=\mathbb S^3$ and $\mathcal F$ be a codimension-one foliation (for example the Reeb foliation). If the first obstruction class $c_\mathcal F$ were to vanish, there would exist a closed defining one-form $\alpha$. But a closed one-form on $\mathbb S^3$ is necessarily exact, $\alpha=df$, and since $\mathbb S^3$ is compact $\alpha$ would vanish at the singular points of $f$. Thus, $\mathbb S^3$ cannot be the exceptional hypersurface of a $b$-symplectic manifold.
\end{example}

\begin{example} Consider $Z=\mathbb T^3$ with coordinates $\theta_1,\theta_2,\theta_3$ and $\mathcal{F}$ the codimension-one foliation on $Z$ with leaves the different $k$-levels, $k\in\mathbb{R}$, of
$$\theta_3=a\theta_1+b \theta_2+k,$$
where $a,b\in\RR$ are fixed and independent over $\mathbb{Q}$; then each leaf is diffeomorphic to $\RR^2$ \cite{mamaev}. The one-form
$$\alpha=\frac{a}{a^2+b^2+1}\,d\theta_1+\frac{b}{a^2+b^2+1}\,d\theta_2-\frac{1}{a^2+b^2+1}\,d\theta_3$$
is a defining one-form for $\mathcal{F}$ and there is a Poisson structure $\Pi$ on $Z$  which induces the foliation $\mathcal{F}$ and for which
$$\omega=d\theta_1\wedge d\theta_2+b\, d\theta_1\wedge d\theta_3-a\,d\theta_2\wedge d\theta_3$$
is the defining two-form. Both $\alpha$ and $\omega$ are
closed, and so the invariants $c_\mathcal{F}$ and $\sigma_\mathcal{F}$ vanish.

The manifold $Z$ is the exceptional hypersurface of  a $b$-symplectic tubular neighborhood $(\mathcal{U},Z)$ with $b$-symplectic form
$$\tilde\omega= \frac{dt}{t}\wedge p^*\alpha+ p^*\omega,$$
where $p:\mathcal{U}=Z\times(-\varepsilon,\varepsilon)\to Z$ is projection and $t\in(-\varepsilon,\varepsilon)$. On $Z$, the $b$-symplectic form $\tilde{\omega}$ induces the Poisson structure $\Pi$.

To produce a global example of a compact $b$-symplectic extension of $Z$, we use instead $Z=\mathbb T^3\times\{0,\pi\}$. Then $(\mathbb T^4,Z)$ with
$$\tilde\omega= \frac{1}{\sin\theta_4}\,d\theta_4 \wedge p^*\alpha + p^*\omega$$ is a compact $b$-symplectic manifold.

\end{example}

\begin{remark}
As an application of the normal form formula given by (\ref{eq:normalform}) in a neighborhood of $Z$, Marcut and Osorno obtain a refinement of  some  of the $b$-cohomology constraints that we have obtained in Section \ref{section:cohomology} (see \cite{marcutosorno1} for more details).
\end{remark}


\section{Integrability of $b$-symplectic manifolds as Poisson manifolds}\label{section:integrability}

We begin reviewing the notion of symplectic groupoid as defined by Weinstein in \cite{weinstein3}. We recall that a groupoid is a set $\Gamma$ equipped with a subset $\Gamma_0$ of identity elements, a source map $\alpha:\Gamma\to\Gamma_0$, a target map $\beta:\Gamma\to\Gamma_0$, a multiplication operation $(x,y)\to x\cdot y$ on the set of $(x,y)$'s satisfying $\beta(x)=\alpha(y)$, and an inversion operation $\iota:\Gamma\to\Gamma$, with the data above satisfying the obvious generalizations of the usual group axioms.

If $\Gamma$ is a $C^\infty$ manifold and the data above is also $C^\infty$, then $\Gamma$ is called a differentiable groupoid. If $\Gamma$ is also equipped with a symplectic form $\Omega$ for which the manifold $\Gamma_3=\{(z,x,y): z=x\cdot y\}$ is Lagrangian in $(\Gamma,\Omega)\times(\Gamma,-\Omega)\times(\Gamma,-\Omega)$ then $\Gamma$ is called a symplectic groupoid.

It is easy to see from the definition above that $\Gamma_0$ is a Lagrangian submanifold of $\Gamma$ and with a little more effort one can show that $\Gamma_0$ has an intrinsic Poisson structure for which the maps $\alpha$ and $\beta$ are Poisson maps. Thus for every symplectic groupoid $\Gamma$ one gets an intrinsically associated Poisson manifold $\Gamma_0$, and as pointed out in \cite{weinstein3} it is natural to ask if the converse is true: Given a Poisson manifold $\Gamma_0$ can it be ``integrated'' to a symplectic groupoid $\Gamma$ having $\Gamma_0$ as its set of identity elements? Counterexamples show that this isn't always the case, but that for many familiar examples of Poisson manifolds it holds true. Two examples, which will figure in our application of this theory to $b$-symplectic manifolds, are the following:

\begin{enumerate}
\item If $\Gamma_0=M$ with $(M,\omega)$ a symplectic manifold, then one can take $\Gamma$ to be the pair groupoid $M\times M^-$ with the composition law $(x,y)\cdot(y,z)=(x,z)$.
\item Let $G$ be a Lie group, $\mathfrak{g}$ its Lie algebra and $\Gamma_0=\mathfrak{g}^*$ with its Poisson bivector field
$$f\in\mathfrak{g}^*\to\Pi_f\in\Lambda^2(T_f\mathfrak{g}^*)\cong\Lambda^2(\mathfrak{g})^*,$$
where $\Pi_f(x\wedge y)=<f,[x,y]>.$ In this case, the symplectic groupoid integrating $\Gamma_0$ is $T^*G$ with source and target maps
$$\alpha,\beta:T^*G\to G\times\mathfrak{g}^*\to\mathfrak{g}^*$$
given by the right and left trivializations on $T^*G$.
\end{enumerate}

In the 1990's, the question posed by Weinstein was formulated in a more general context -- ``Given a Lie algebroid, can it be integrated to a Lie groupoid?'' -- and in 2003, necessary and sufficient conditions for this to be true were given by Crainic and Fernandes in \cite{marui2}. In particular, they showed that a Poisson manifold $M$ can be integrated to a symplectic groupoid if its Poisson bivector field
$$m\in M\to\Pi_m\in\Lambda^2(T_m M)$$
is non-degenerate on an open dense subset of $M$. Because a $b$-Poisson bivector field $\Pi$ is non-degenerate on $M\setminus Z$, all $b$-symplectic manifolds are integrable.

Nonetheless, it would be nice to have a concrete description of the $\Gamma$ corresponding to the $\Gamma_0$ given by a $b$-symplectic manifold $(M,\omega)$. Locally such a description can be given in terms of the two examples described earlier due to the fact that the $b$-symplectic form can be written in a neighborhood of a point $p\in Z$ as (see Remark \ref{remark:darbouxpoisson})
$$\omega=\omega_{L_p}+(\Pi^T)^\sharp.$$

\begin{proposition}
The {\bf{local symplectic groupoid}} integrating $M$ in a neighborhood of a point $p\in Z$ is the product groupoid
$$T^*G\times\Gamma,$$
where $G$ is the ``$ax+b$ group'' (with Lie algebra structure given by $[e_1,e_2]=e_2$) and $\Gamma$ is the pair groupoid $(L_p,\omega_{L_p})\times(L_p,\omega_{L_p}^-)$.
\end{proposition}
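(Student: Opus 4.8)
The plan is to reduce the claim to the two model examples recalled just before the proposition: the pair groupoid integrating a symplectic manifold, and $T^*G$ integrating the dual $\mathfrak g^*$ of a Lie algebra $\mathfrak g$. By Remark \ref{remark:darbouxpoisson} (equivalently Proposition \ref{prop:localb}), in a neighborhood $\cU$ of $p\in Z$ the $b$-Poisson structure splits as a product: $(\cU,\omega)\cong (L_p,\omega_{L_p})\times (V,\Pi^T)$, where $L_p$ is the symplectic leaf through $p$ and $(V,\Pi^T)$ is a $2$-dimensional $b$-Poisson manifold with $\omega_{\Pi^T}=\frac{1}{z}dz\wedge dt$. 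So the first step is to invoke the general fact (stated in Weinstein \cite{weinstein3} and used here) that the local symplectic groupoid of a product Poisson manifold $(M_1\times M_2,\Pi_1+\Pi_2)$ is the product of the local symplectic groupoids of $(M_1,\Pi_1)$ and $(M_2,\Pi_2)$, with product symplectic structure. This is immediate from the Lie algebroid side, since the cotangent Lie algebroid of a product Poisson manifold is the direct sum of the two cotangent algebroids, and integration commutes with direct sums.

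Granting this, the proof splits into two identifications. For the symplectic factor $(L_p,\omega_{L_p})$, example (1) in the excerpt says the (local) symplectic groupoid is the pair groupoid $(L_p,\omega_{L_p})\times(L_p,\omega_{L_p}^-)$, which is exactly the $\Gamma$ in the statement. For the $2$-dimensional $b$-factor, the key observation is that the Poisson structure $\Pi^T = z\,\frac{\partial}{\partial z}\wedge\frac{\partial}{\partial t}$ on a neighborhood of $\{z=0\}$ is, after relabeling coordinates ($z\leftrightarrow$ the $e_2$-direction, $t\leftrightarrow$ the $e_1$-direction), precisely the linear Poisson structure on $\mathfrak g^*$ for $\mathfrak g$ the $2$-dimensional non-abelian Lie algebra with $[e_1,e_2]=e_2$ — this identification is exactly the one already made in the ``affine group'' Example in section \ref{section:twopointsofview}, where $\Pi = y\frac{\partial}{\partial x}\wedge\frac{\partial}{\partial y}$ is displayed as the Lie--Poisson structure of the $ax+b$ group. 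Then example (2) gives that its symplectic groupoid is $T^*G$ with $G$ the $ax+b$ group, with source and target the left and right trivializations. Assembling, the local symplectic groupoid of $\cU$ is $T^*G\times\bigl((L_p,\omega_{L_p})\times(L_p,\omega_{L_p}^-)\bigr)$, as claimed.

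A couple of technical points deserve care. First, ``local symplectic groupoid'' should be understood in the germ-near-the-identities sense (a neighborhood of $\cU$ in the groupoid), since globally $T^*G$ over the whole $ax+b$ group integrates all of $\mathfrak g^*$, whereas we only want a neighborhood of $Z\cap\cU=\{z=0\}$; one restricts $T^*G$ to a neighborhood of the corresponding coadjoint slice and correspondingly shrinks the pair-groupoid factor. Second, one should check that the product symplectic form on $T^*G\times(L_p\times L_p^-)$ genuinely integrates $\omega = \omega_{L_p}+(\Pi^T)^\sharp$ and not some other Poisson structure with the same underlying foliation; this is guaranteed because the splitting $\omega=\omega_{L_p}+(\Pi^T)^\sharp$ is a splitting of Poisson manifolds (not merely of the leaf spaces), so the cotangent algebroid genuinely decomposes as a direct sum and Lie's second theorem (in the algebroid form of Crainic--Fernandes \cite{marui2}, whose integrability hypothesis is satisfied here since $\Pi$ is non-degenerate on a dense open set) produces the product groupoid uniquely up to isomorphism near the identities.

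The main obstacle is not any single hard computation but rather stating precisely in what sense the groupoid is ``local'' and verifying that the product decomposition of the Poisson structure lifts to a product decomposition of the integrating groupoid — i.e.\ that integration of Lie algebroids respects direct sums at the germ level. Once that functoriality statement is pinned down (it follows from the uniqueness part of the integrability theorem, since the product $T^*G\times(L_p\times L_p^-)$ is a source-connected, source-simply-connected symplectic groupoid with the right units and the right cotangent algebroid), the rest is just bookkeeping with the two explicit models already recorded in the excerpt.
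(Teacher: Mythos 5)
Your proposal is correct and follows essentially the same route as the paper, which justifies the proposition precisely by the local splitting $\omega=\omega_{L_p}+(\Pi^T)^\sharp$ of Remark \ref{remark:darbouxpoisson} together with the two model groupoids (the pair groupoid for the symplectic leaf and $T^*G$ for the dual of the $ax+b$ Lie algebra, as in the affine-group example). Your added care about the germ-level meaning of ``local'' and the compatibility of integration with Poisson products is a reasonable sharpening of what the paper leaves implicit.
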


A semiglobal integrability result, i.e., a description of the symplectic groupoid integrating $M$ in a neighborhood the exceptional hypersurface $Z$, rather than the neighborhood of a point in $Z$, requires first that we discuss the integrability of $Z$. This was done in \cite{guimipi}, and we recall here the main results as applied to the case in study:

\begin{proposition}\cite{guimipi}
If $(M,Z)$ is a $b$-symplectic manifold, then the induced regular Poisson manifold $Z$ is integrable.
\end{proposition}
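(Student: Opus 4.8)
The plan is to exhibit an explicit symplectic groupoid integrating the regular corank-one Poisson manifold $Z$, exploiting the structural description of $Z$ furnished by Theorem \ref{thm: semilocalstructure}. By that theorem (combined with the vanishing of $c_\mathcal{F}$ and $\sigma_\mathcal{F}$, which we proved always holds for the exceptional hypersurface of a $b$-symplectic manifold in Section \ref{section:invariantsassociatedtoabpoissonstructure}), if the symplectic foliation $\mathcal{F}$ of $Z$ has a compact leaf then $Z$ is the mapping torus of a symplectomorphism $\phi:(L,\omega_L)\to(L,\omega_L)$. The strategy is then to build the symplectic groupoid of such a mapping torus directly, and to note that in general the argument is local-to-semilocal: $Z$ fibers over $\mathbb{S}^1$ (or more precisely admits a closed defining one-form $\alpha$, hence a locally trivial presentation as leaves times an interval) with symplectic fibers, and the groupoid is assembled out of the pair groupoid along the leaves together with the fundamental groupoid of the base circle.

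Concretely, first I would recall that for a symplectic fiber bundle $L\hookrightarrow Z\to\mathbb{S}^1$ with structure symplectomorphism $\phi$, the Poisson structure on $Z$ has as its symplectic foliation the fibers, with leafwise symplectic form $\omega_L$. I would then write down $\Gamma = (\widetilde{\mathbb{S}^1}\times_{\mathbb{Z}} (L\times L^-)) $ appropriately: take the monodromy/suspension construction, i.e., the quotient of $\mathbb{R}\times\mathbb{R}\times L\times L$ by the $\mathbb{Z}$-action generated by $(s,t,x,y)\mapsto(s+1,t+1,\phi(x),\phi(y))$, with source, target, multiplication and inversion inherited from the pair groupoid $L\times L^-$ in the $L$-variables and from the (action) groupoid $\mathbb{R}\rightrightarrows\mathbb{S}^1$ in the first two variables. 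The symplectic form is the descent of $dt\wedge d\sigma + \mathrm{pr}_1^*\omega_L - \mathrm{pr}_2^*\omega_L$ where $\sigma$ is the fiber coordinate transverse to $\mathcal F$; one checks it is well-defined, closed, non-degenerate, and that the graph of multiplication is Lagrangian, so that $(\Gamma,\Omega)$ is a symplectic groupoid with $\Gamma_0 = Z$ and the Poisson structure on $\Gamma_0$ the given $\Pi$. When $\mathcal F$ has no compact leaf the same formulas make sense leafwise and one glues over the base using the closed defining one-form $\alpha$; alternatively, as the excerpt already observes, integrability is automatic from Crainic--Fernandes \cite{marui2} since the corank-one regular Poisson structure is integrable (its characteristic foliation is simple along $\mathbb{S}^1$-directions), so a shorter route is simply to invoke that theorem and then spend the effort on making the integration explicit via the mapping-torus model.

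The main obstacle I expect is verifying that the candidate form $\Omega$ descends to the quotient and is genuinely non-degenerate and multiplicative: one must check compatibility of the suspension construction in the base variables with the pair-groupoid structure in the leaf variables, in particular that the transverse term $dt\wedge d\sigma$ is invariant under the $\mathbb{Z}$-action and that the Lagrangian condition on $\Gamma_3=\{(z,x,y):z=x\cdot y\}$ holds. A secondary technical point is the interface between the compact-leaf case (where Theorem \ref{thm: semilocalstructure} gives the clean mapping-torus picture) and the general case (where one only has a closed defining one-form and must work with the corresponding flat $\mathbb{S}^1$-structure). I would sidestep most of this by structuring the proof as: (i) cite \cite{guimipi} and \cite{marui2} for existence of the integration, then (ii) describe the explicit model in the mapping-torus case as the content of the proposition, relegating the verification of the symplectic-groupoid axioms to the analogous check in \cite{guimipi}.
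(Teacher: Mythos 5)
Your argument only covers the case in which the symplectic foliation of $Z$ has a compact leaf, while the proposition assumes nothing of the sort, and both of your fallbacks for the general case fail. First, the claim that integrability is ``automatic from Crainic--Fernandes since the corank-one regular Poisson structure is integrable'' is not correct: regular Poisson structures are in general \emph{not} integrable (the monodromy groups can fail to be uniformly discrete; the standard counterexamples are regular of constant corank), and the Crainic--Fernandes criterion quoted in this section --- nondegeneracy of the bivector on an open dense set --- applies to the $b$-Poisson structure on $M$, not to $\Pi_Z$, which is degenerate at every point of $Z$. Second, the proposed ``gluing over the base using the closed defining one-form $\alpha$'' is not available in general: a closed nowhere-vanishing one-form does not present $\mathcal{F}$ as a fibration over $\mathbb{S}^1$. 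In the $\mathbb{T}^3$ example of Section \ref{section:theextensionproblem} the leaves are dense planes, there is no compact leaf and no return symplectomorphism $\phi$, so the suspension/mapping-torus groupoid you construct simply does not exist there; Theorem \ref{thm: semilocalstructure} and Theorem \ref{thm:integratinggroupoidoftheexceptionalsurface} are explicitly conditional on the existence of a compact leaf.

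The proof in \cite{guimipi} takes a different route that needs no compactness of the leaves: the vanishing of the second obstruction class $\sigma_\mathcal{F}$ (established in Section \ref{section:invariantsassociatedtoabpoissonstructure}, concretely via the closed defining two-form, e.g.\ $\tilde{\beta}$) provides a \emph{closed} two-form on $Z$ restricting to the leafwise symplectic form on every leaf --- a leafwise symplectic embedding in the sense of Gotay --- and then \cite[Corollary 14]{marui1} (the Crainic--Fernandes integrability criterion for regular Poisson manifolds, which is the correct reference here, not \cite{marui2}) yields integrability of $Z$. Your explicit suspension groupoid is essentially the content of the separate, stronger Theorem \ref{thm:integratinggroupoidoftheexceptionalsurface}, valid under the compact-leaf hypothesis, and cannot replace this general argument. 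To repair your proposal, either keep the explicit model but restrict its scope to the mapping-torus case, or base the proof of the proposition on the closed extension of the leafwise symplectic forms coming from $\sigma_\mathcal{F}=0$ together with the regular-case Crainic--Fernandes criterion.
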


This was obtained using \cite[Corollary 14]{marui1} and as a consequence of the existence of a leafwise symplectic embedding for $Z$. This is guaranteed by the vanishing of the second invariant $\sigma_\mathcal{F}$, which, as recalled in section \ref{section:invariantsassociatedtoabpoissonstructure}, can be interepreted via Gotay's embedding theorem exactly as measuring the obstruction of the existence of a closed two-form on $Z$ which restricts to a symplectic form on each leaf of the foliation.

\begin{theorem} \label{thm:integratinggroupoidoftheexceptionalsurface}\cite{guimipi}
If $(M,Z)$ is a $b$-symplectic manifold and the induced symplectic foliation of $Z$ contains a compact leaf, then all leaves are compact and the hypersurface $Z$ is a mapping torus, and furthermore $Z$ is integrable and its Weinstein's symplectic groupoid is the associated mapping torus groupoid.
\end{theorem}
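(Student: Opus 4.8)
The plan is to combine the semilocal structure theorem of Section~\ref{section:invariantsassociatedtoabpoissonstructure} with the Crainic--Fernandes integrability criterion and an explicit description of the candidate groupoid. First I would record the mapping torus structure. Since $(M,Z)$ is $b$-symplectic, the induced corank-one regular Poisson structure on $Z$ has vanishing obstruction classes $c_\mathcal{F}=\sigma_\mathcal{F}=0$, as established in Section~\ref{section:invariantsassociatedtoabpoissonstructure}. Under the standing hypothesis that $\mathcal{F}$ has a compact leaf $L$, Theorem~\ref{thm: semilocalstructure} then applies directly: every leaf is symplectomorphic to $(L,\omega_L)$, the leaf space is $\mathbb{S}^1$, and $Z$ is the mapping torus
$$Z\cong L_\phi=(L\times\mathbb{R})/\mathbb{Z},\qquad (x,t)\sim(\phi(x),t+1),$$
of a symplectomorphism $\phi\colon(L,\omega_L)\to(L,\omega_L)$. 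In a product chart $L\times(-\varepsilon,\varepsilon)$ around any leaf, the Poisson structure of $Z$ is the product $\Pi_L\oplus 0$, and it is this local model that will drive the groupoid computation.

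Next, integrability. The vanishing of $\sigma_\mathcal{F}$ produces a closed two-form $\omega$ on $Z$ restricting to the leafwise symplectic form on each leaf (the theorem characterizing $\sigma_\mathcal{F}=0$ in Section~\ref{section:invariantsassociatedtoabpoissonstructure}); equivalently, $Z$ admits a leafwise symplectic realization. By \cite[Corollary~14]{marui1}, a regular Poisson manifold carrying such a global leafwise-nondegenerate closed two-form has vanishing monodromy obstructions and is therefore integrable, so the Weinstein source-simply-connected symplectic groupoid $\Sigma(Z)$ exists.

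Finally I would identify $\Sigma(Z)$. Let $\mathcal{G}=\Sigma(L)$ be the Weinstein symplectic groupoid of $(L,\omega_L)$ --- the pair groupoid $L\times L^{-}$ when $L$ is simply connected, its source-simply-connected version in general --- and let $\Phi\colon\mathcal{G}\to\mathcal{G}$ be the symplectic groupoid automorphism integrating $\phi$. Set
$$\Sigma:=(\mathcal{G}\times T^*\mathbb{R})/\mathbb{Z},\qquad (g,t,\xi)\sim(\Phi(g),t+1,\xi);$$
this is the \emph{associated mapping torus groupoid}. I would then check, in order: (i) $\Sigma\rightrightarrows Z$ is a Lie groupoid with unit manifold $(L\times\mathbb{R})/\mathbb{Z}=Z$ and source fibers $s^{-1}_{\mathcal{G}}(x)\times\mathbb{R}$, hence source-connected and source-simply-connected because those of $\mathcal{G}$ are; (ii) the product form $\omega_{\mathcal{G}}+dt\wedge d\xi$ is multiplicative, nondegenerate and $\mathbb{Z}$-invariant (using that $\Phi$ is a symplectomorphism), so it descends to a symplectic groupoid structure on $\Sigma$; (iii) the Lie algebroid of $\Sigma$ is $(T^*(L\times\mathbb{R}))/\mathbb{Z}\cong T^*Z$ with its Poisson cotangent algebroid structure. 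By the uniqueness of source-simply-connected integrations this forces $\Sigma\cong\Sigma(Z)$, which is the asserted identification.

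The step I expect to be the main obstacle is (iii) in the non-simply-connected case: one must verify that quotienting $\mathcal{G}\times T^*\mathbb{R}$ by $\mathbb{Z}$ really reproduces the cotangent algebroid of all of $Z$, not merely of a product chart, and that $\phi$ lifts to a genuine automorphism $\Phi$ of the monodromy groupoid of $L$ compatible with the symplectic structure. Everything else is bookkeeping on top of Theorem~\ref{thm: semilocalstructure}, the obstruction-class theorems of Section~\ref{section:invariantsassociatedtoabpoissonstructure}, and \cite{marui1}.
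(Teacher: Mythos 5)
Your proposal follows essentially the same route as the paper, which defers to \cite{guimipi} and sketches exactly these ingredients: the mapping torus structure from Theorem \ref{thm: semilocalstructure} via $c_\mathcal{F}=\sigma_\mathcal{F}=0$, integrability from the closed leafwise symplectic two-form together with \cite[Corollary 14]{marui1}, and the identification of the groupoid by a suspension (mapping torus) construction over the leaf groupoid times $T^*\mathbb{R}$. The only cosmetic difference is that the paper's sketch suspends the pair groupoid $(L_p\times L_p^{-})\times T^*\mathbb{R}$ by a lift of $v_{mod}|_Z$, whereas you suspend the source-simply-connected integration $\Sigma(L)\times T^*\mathbb{R}$ by the lift $\Phi$ of $\phi$ and invoke uniqueness of source-simply-connected integrations --- if anything the more careful variant when $\pi_1(L)\neq 0$.
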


We describe succintly how the Weinstein's symplectic groupoid for $Z$ is constructed in \cite{guimipi}. For a point $p\in Z$ contained in the symplectic leaf $L_p$, consider $\Sigma=((L_p\times L_p^{-})\times T^*(\mathbb R),\Omega+d\lambda_{\text{liouville}})$ with the groupoid product structure. Then, consider a Poisson vector field on $Z$ which is generates the $\mathbb{S}^1$-action on the mapping torus $Z$, in our case we may consider $v_{mod}|_Z$, and use a lift of this vector field to construct the associated mapping torus groupoid. This will be give us the Weinstein's symplectic groupoid of $Z$.

A semiglobal integrability result for $b$-symplectic manifolds can be obtained by combining Theorem \ref{thm:integratinggroupoidoftheexceptionalsurface} with the semiglobal model for $b$-sympletic manifolds given by Theorem \ref{thm: semilocalstructure}. For this we consider the projection map $p:\cU\to Z$ from a tubular neighborhood $\cU$ of $Z$ and use it to pullback the groupoid structure on $Z$ to $\cU$, which is fairly simple when $Z$ is a mapping torus, as described above.

The description and classification of symplectic groupoids of various Poisson manifolds and in particular of $b$-symplectic manifolds is recent work of Gualtieri and Li. We refer the reader to their preprint \cite{Gualtierili} for further details.

\section{Integrable systems on $b$-symplectic manifolds}\label{section:integrablesystems}

This paper is the second of a {series of papers} on $b$-symplectic geometry. In the first paper we studied the geometry of regular Poisson manifolds with codimension-one symplectic foliations, and in particular proved the results about such manifolds that are quoted in Section \ref{section:invariantsassociatedtoabpoissonstructure}. In {a subsequent} paper in this series we will study integrable systems and Hamiltonian actions on $b$-symplectic manifolds.

We start with the following definition,

\begin{definition} An {\bf{adapted integrable system}} on a $b$-symplectic manifold $(M^{2n}, Z, \omega)$ is a collection of $n$ smooth functions $\{f_1, \dots, f_n\}$ called \textbf{first integrals} such that $f_1$ is a defining function for $Z$, $\{f_i, f_j\} = 0$ for all $i, j$, the functions are functionally independent (i.e., $df_1 \wedge \dots df_n  \neq 0$) on a dense set, and the restrictions of $\{f_2, \dots, f_n\}$ to each symplectic leaf of $Z$ are functionally independent on the leaf. The function ${\bf{F}}=(f_1,\dots, f_n)$ is the \textbf{moment map} of the integrable system.
\end{definition}

 The following example is the canonical local model of a regular adapted integrable system on a $b$-symplectic manifold.
\begin{example}
Consider $\mathbb R^{2n}$ with coordinates $(x_1,y_1,\dots, x_{n-1},y_{n-1}, t,z)$ and $b$-symplectic form
$\omega=\sum_{i=1}^{n-1} dx_i\wedge dy_i+\frac{1}{t}\,dt\wedge dz$. The bivector field corresponding to $\omega$ is
\[
\Pi=\sum_{i=1}^{n-1}\frac{\partial}{\partial x_i}\wedge \frac{\partial}{\partial y_i}+t\frac{\partial}{\partial t}\wedge \frac{\partial}{\partial z}.
\]
The functions $\{x_1, \dots, x_{n-1}, t\}$ define an adapted integrable system with moment map $\mathbf{F}=(x_1,\dots, x_{n-1},t)$.
\end{example}

Most of the classical normal form and action-angle results for symplectic manifolds also hold for $b$-symplectic manifolds in a neighborhood of points in $Z$ including possible non-degenerate singularities as studied by Eliasson, Zung and the second author of this paper (\cite{eliassonthesis,eliassonelliptic,evathesis,mirandazungequiv}). In particular we will see:

\begin{theorem}
Given an adapted integrable system with non-degenerate singularities on a
$b$-symplectic manifold $(M,Z)$, there exist Eliasson-type normal forms
in a neighborhood of points in $Z$ and the minimal rank for these
singularities is one along $Z$.
\end{theorem}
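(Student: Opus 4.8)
The plan is to reduce the theorem to the known normal form results of Eliasson and Miranda--Zung for genuine symplectic manifolds, by ``trading away'' the singular direction along $Z$. First I would use the $b$-Darboux theorem (Theorem~\ref{theorem:Darboux2}) together with the semilocal splitting of Remark~\ref{remark:darbouxpoisson} to write, in a neighborhood of a point $p\in Z$, the $b$-symplectic form as $\omega = \omega_L + (\Pi^T)^\sharp$, where $\omega_L$ is the symplectic form on the symplectic leaf $L_p$ inside $Z$ and $(\Pi^T)^\sharp$ is the dual of the two-dimensional $b$-Poisson structure $\frac{1}{t}\,dt\wedge dz$. Since $f_1$ is a defining function for $Z$, and since a defining function is essentially unique up to the action of $b$-diffeomorphisms (by the Moser-type argument underlying Theorem~\ref{Moser}), I would first normalize $f_1$ so that in the splitting coordinates $f_1 = t$; this handles the singular ``$b$-direction'' once and for all, and accounts for the rank-one contribution along $Z$.

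The second step is to analyze the remaining first integrals $f_2,\dots,f_n$. Because $\{f_1,f_i\}=0$ and $f_1=t$ is (up to the factor) a Casimir-type function for the $t\,\partial_t\wedge\partial_z$ block, the Hamiltonian vector fields of $f_2,\dots,f_n$ are tangent to the level sets $\{t=\mathrm{const}\}$, and in particular their restrictions to $Z=\{t=0\}$ are tangent to the symplectic leaves. Restricting to the symplectic leaf $L_p$ (with its honest symplectic form $\omega_L$), the functions $f_2|_{L_p},\dots,f_n|_{L_p}$ form a genuine integrable system on the $(2n-2)$-dimensional symplectic manifold $(L_p,\omega_L)$ with a non-degenerate singularity at $p$ (non-degeneracy of the singularity of the $b$-integrable system is designed precisely to restrict to non-degeneracy in the Williamson/Eliasson sense on the leaf). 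Here I would invoke the Eliasson normal form \cite{eliassonthesis,eliassonelliptic} for non-degenerate singular points of integrable systems on symplectic manifolds --- in the analytic or smooth cases as treated in \cite{evathesis,mirandazungequiv} --- to obtain coordinates $(x_2,y_2,\dots,x_n,y_n)$ on $L_p$ in which $f_2,\dots,f_n$ take the standard elliptic/hyperbolic/focus-focus quadratic forms and $\omega_L = \sum_{i=2}^n dx_i\wedge dy_i$.

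The third step is to ``thicken'' this leafwise normal form to a neighborhood in $M$. Using the product structure $\omega = \omega_L + \frac{1}{t}\,dt\wedge dz$ and a parametrized version of the Eliasson theorem (the normal form depends smoothly on the transverse parameters $t$ and $z$, since the family of Hamiltonian vector fields and the family of leaves vary smoothly), I would extend the coordinates $(x_i,y_i)_{i\ge 2}$ smoothly over the tubular neighborhood $Z\times(-\varepsilon,\varepsilon)$ and then apply the $b$-Moser argument (Theorem~\ref{Moser}) to straighten $\omega$ back to $\sum_{i=2}^n dx_i\wedge dy_i + \frac{1}{t}\,dt\wedge dz$ while preserving the moment map components. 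Adjoining $f_1=t$, this gives the desired Eliasson-type $b$-normal form, and the computation of the rank is then immediate: the singular directions among $f_2,\dots,f_n$ contribute rank as in the classical case, while $f_1=t$ contributes one everywhere along $Z$, so the minimal possible rank of an adapted integrable system at a point of $Z$ is exactly one, attained when all of $f_2,\dots,f_n$ are singular there.

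The main obstacle I anticipate is the parametrized/equivariant step: Eliasson's theorem is delicate in the smooth category, and producing a version with smooth (or at least continuous) dependence on the transverse parameters $t,z$ --- so that the leafwise normal forms glue to a bona fide normal form on a full neighborhood of $Z$ rather than leaf-by-leaf --- is where the real work lies. This is closely analogous to the parametrized Morse/Eliasson lemmas used in \cite{mirandazungequiv}, and I would lean on those techniques; the $b$-specific ingredients (the $t\,\partial_t\wedge\partial_z$ block and the $b$-Moser theorem) are by contrast routine once the splitting $\omega=\omega_L+(\Pi^T)^\sharp$ is in hand.
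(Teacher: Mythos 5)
A preliminary caveat: the paper does not actually prove this statement. Section \ref{section:integrablesystems} is an announcement (``In particular we will see\dots''), with the proof deferred to a subsequent paper in the series, so there is no in-paper argument to compare yours against. Judged on its own terms, your outline is the natural one and uses exactly the machinery developed here (the splitting of Remark \ref{remark:darbouxpoisson}, the relative Moser theorem \ref{Moser}, the leafwise symplectic form $i_L^*\tilde\beta$ of Proposition \ref{thetamu}), and your reading of the rank assertion is the correct one: since $f_1$ is a defining function, $df_1$ never vanishes along $Z$, so $\mathrm{rank}\, dF\geq 1$ there, with equality attained when $f_2,\dots,f_n$ are all critical on the leaf. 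Note that one must argue with $dF$ and not with the span of the Hamiltonian vector fields: $X_{f_1}=f_1\cdot W$ for a smooth ($b$-)vector field $W$, so $X_{f_1}$ vanishes identically on $Z$ and the orbit dimension can drop to zero there.

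Two steps of your proposal are genuinely incomplete. First, you cannot dispose of the $b$-direction ``once and for all'' by invoking $b$-Darboux to set $f_1=t$: the Moser diffeomorphism of Theorem \ref{Moser} fixes $Z$ only pointwise and need not preserve $f_1$, so after straightening $\omega$ one only knows $f_1=t\,h$ with $h>0$, while conversely taking $t:=f_1$ destroys the normal form of $\omega$. Normalizing the pair $(\omega,F)$ simultaneously is part of the content of the theorem, not a preliminary reduction; the workable route, which your second step almost contains, is to use $\{f_1,f_j\}=0$ in both directions --- it gives tangency of $X_{f_j}$ to the levels of $f_1$ \emph{and} $X_{f_1}(f_j)=f_1\,W(f_j)=0$, forcing the $f_j$ to be invariant under the transverse (modular-type) direction near $Z$ --- and then reduce to a $t$-parametrized family of integrable systems on a fixed $(2n-2)$-dimensional symplectic slice, finishing with a Moser argument whose vector field is constructed to preserve the $f_j$. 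Second, the parametric, uniformly smooth Eliasson normal form (smooth dependence up to and including $t=0$, compatibility with the final Moser step) is the analytic core; you rightly flag it, but as written you only gesture at it via \cite{evathesis,mirandazungequiv}, and in addition the notion of ``non-degenerate singularity'' for an adapted system is not defined in the paper, so making precise that it means leafwise Williamson non-degeneracy varying well in $t$ is itself part of what a complete proof must supply. In short: a sound and plausibly correct strategy, consistent with where the literature subsequently went, but an outline rather than a proof.
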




\begin{thebibliography}{ABCD}

\bibitem[Ca]{anacannasbook} A.\ Cannas da Silva, \emph{Lectures on Symplectic Geometry}  (book), Lecture Notes in Mathematics 1764, Corrected 2nd Printing, Springer, 2008.

\bibitem[CW]{bookCannasWeinstein} A.\ Cannas da Silva and A.\ Weinstein, \emph{Geometric Models for Noncommutative Algebras} (book), Berkeley Mathematics Lecture Notes series, AMS, 1999.


 \bibitem[Ca]{cavalcanti} Gil Cavalcanti,
\emph{Examples and counter-examples of log-symplectic manifolds.}, preprint 2013, arxiv 1303.6420.


 \bibitem[CDW]{weinstein} A.\ Coste, P.\ Dazord and A.\ Weinstein, \emph{Groupoides symplectiques}, Publ.\ Dept.\ Math.\ Univ.\ Claude-Bernard Lyon I, 1987.

\bibitem[CF03]{marui2} M.\ Crainic and R.L.\ Fernandes, \emph{Integrability of Lie brackets}, Annals of Mathematics 157, 2003, pp.\ 575--620.

\bibitem[CF04]{marui1} M.\ Crainic and R.L.\ Fernandes, \emph{Integrability of Poisson brackets}, J.\ of Differential Geometry 66, 2004, pp.\ 71--137.

\bibitem[CF11]{marui3}  M.\ Crainic and R.L.\ Fernandes, \emph{Lectures on Integrability of Lie Brackets}, Geometry and Topology Monographs 17, 2011, pp.\ 1--107.

\bibitem[Co]{conn} J.\ Conn, \emph{Normal forms for smooth Poisson structures}, Annals of Mathematics (2) 121, no.\ 3, 1985, pp.\ 565--593.

\bibitem[E84]{eliassonthesis} L.H.\ Eliasson, \emph{Normal forms for Hamiltonian systems with
Poisson  commuting integrals}, Ph.D.\ thesis, 1984.

\bibitem[E90]{eliassonelliptic} L.H.\ Eliasson, \emph{Normal forms for Hamiltonian systems with
Poisson commuting integrals -- elliptic case}, Comment.\ Math.\ Helv.\ 65, no.\ 1, 1990, pp.\ 4--35.

\bibitem[F]{freijlich} P.\ Frejlich, \emph{h-principles around Poisson geometry}, Ph.D.\ thesis, Instituto Superior Tecnico, 2011.

\bibitem[FMM]{frejlichmartinezmiranda} P.\ Frejlich, D. \ Mart\'{\i}nez and E. Miranda, \emph{Symplectic topology of b-symplectic manifolds }, preprint 2013, arXiv:1312.7329.

\bibitem[Ga]{gotay} M.\ Gotay, \emph{On coisotropic embeddings of presymplectic manifolds}, Proc.\ Amer.\ Math.\ Soc.\ 84, 1982, pp.\ 111--114.

\bibitem[Go]{goto} R.\ Goto, \emph{Rozansky-Witten invariants of log symplectic manifolds, integrable systems, topology, and physics}, Contemp.\ Math., vol.\ 309, Amer.\ Math.\ Soc., Providence RI, 2002, pp.\ 69--84.

\bibitem[GLSW]{weinsteinetal} M.\ Gotay, R.\ Lashof, J.\ Sniaticky, A.\ Weinstein, \emph{Closed forms on symplectic fibre bundles}, Comment.\ Math.\ Helvetici 58, 1983, pp.\ 617--621.

 \bibitem[GL]{Gualtierili} M.\ Gualtieri and S.\ Li \emph{Symplectic groupoids of log symplectic
manifolds},   Int. Math. Res. Notices,  First published online March 1, 2013 doi:10.1093/imrn/rnt024.

\bibitem[GMP]{guimipi} V.\ Guillemin, E.\ Miranda and A.R.\ Pires, \emph{Codimension one symplectic foliations and regular Poisson structures}, Bulletin of the Brazilian Mathematical Society, New Series 42 (4), 2011, pp.\ 1--17.

    \bibitem[GMPS]{gmps} V. \ Guillemin, E. \ Miranda, A. R. \ Pires and G. \ Scott, \emph{Toric actions on b-symplectic manifolds}, arXiv:1309.1897.

\bibitem[K]{kosman} Y.\ Kosmann-Schwarzbach, \emph{Poisson manifolds, Lie algebroids, modular classes: a survey}, SIGMA, 4, 005, 2008, 30 pages.

\bibitem[Ma]{mamaev} V.\ Mamaev, \emph{Codimension one foliations of flat 3-manifolds}, Sbornik: Mathematics 187:6, 1996, pp.\ 823--833.

\bibitem[MO1]{marcutosorno1} I. \ Marcut and B. \ Osorno, \emph{
On cohomological obstructions to the existence of b-log symplectic structures}, preprint, 2013, arXiv:1303.6246.

\bibitem[MO2]{marcutosorno2} I. \ Marcut and B. \ Osorno, \emph{ Deformations of b-log symplectic structures
}, preprint, 2013, arXiv:1307.3277.


\bibitem[Mt]{Martinez-Torres} D.\ Martinez-Torres, \emph{Global classification of generic multi-vector fields of top degree}, Journal of the London Mathematical Society, 69:3, 2004, pp.\ 751--766.

\bibitem[Me]{Melrose} R.\ Melrose, \emph{Atiyah-Patodi-Singer Index Theorem} (book), Research Notices in Mathematics, A.K.\ Peters, Wellesley, 1993.

\bibitem[Mi]{evathesis} E.\ Miranda, \emph{ On symplectic linearization of singular Lagrangian
foliations},  Ph.D.\ thesis, Universitat de Barcelona, ISBN: 9788469412374, 2003.

\bibitem[MZ]{mirandazungequiv} E.\ Miranda and N.T.\ Zung,  \emph{Equivariant normal forms for nondegenerate singular orbits of integrable Hamiltonian systems}, Ann.\ Sci.\ Ecole Norm.\ Sup., 37, no.\ 6, 2004, pp.\ 819--839.


\bibitem[NT]{NestandTsygan} R.\ Nest and B.\ Tsygan, \emph{Formal deformations of symplectic manifolds with boundary}, J.\ Reine Angew.\ Math.\ 481, 1996, pp.\ 27--54.

\bibitem[R]{Radko} O.\ Radko, \emph{A classification of topologically stable Poisson structures on a compact oriented surface}, Journal of Symplectic Geometry, 1, no.\ 3, 2002, pp.\ 523--542.

\bibitem[S]{scott} G. \ Scott, \emph{The Geometry of $b^k$ Manifolds},  http://arxiv.org/abs/1304.3821

\bibitem[Sw]{swan}R. \ Swan,  \emph{Vector Bundles and Projective Modules}, Trans.\ Amer.\ Math.\ Soc. 105 (1962), 264-277.
\bibitem[V90]{vaisman1} I.\ Vaisman, \emph{Remarks on the Lichnerowicz-Poisson cohomology}, Ann.\ Inst.\ Fourier, Grenoble, 40, no.\ 4, 1990, pp.\ 951--963.

\bibitem[V94]{vaisman2} I.\ Vaisman, \emph{Lectures on the geometry of Poisson manifolds} (book), Progress in Mathematics, 118, Birkhauser Verlag, Basel, 1994.

\bibitem[We83]{weinstein1} A.\ Weinstein, \emph{The local structure of Poisson manifolds.}, Journal of Differential Geometry 18, no.\ 3, 1983, pp.\ 523--557.

\bibitem[We87]{weinstein3} A.\ Weinstein, \emph{Symplectic groupoids and Poisson manifolds}, Bull.\ Amer.\ Math.\ Soc.\ (N.S.) Volume 16, no.\ 1, 1987, pp.\ 101--104.

\bibitem[We97]{Weinstein2} A.\ Weinstein, \emph{The modular automorphism group of a Poisson manifold}, Journal of Geometry and Physics (23), nos.\ 3-4, 1997, pp.\ 379--394.

\end{thebibliography}
\end{document}